\def\bx{\boldsymbol{x}}
\def\by{\boldsymbol{y}}
\def\bz{\boldsymbol{z}}
\def\bg{\boldsymbol{m}}
\def\bs{\boldsymbol{s}}
\def\be{\boldsymbol{e}}
\def\bd{\boldsymbol{d}}
\def\bm{\boldsymbol{g}}
\def\bpi{\boldsymbol{\pi}}
\def\mF{\boldsymbol{\mathcal{F}}}
\DeclareMathOperator{\mG}{{\scriptstyle\boldsymbol{\mathcal{M}}}}
\DeclareMathOperator{\mx}{{\scriptstyle\mathcal{X}}}
\DeclareMathOperator{\md}
{{\scriptstyle\mathcal{D}}}
\DeclareMathOperator{\my}{{\scriptstyle\mathcal{Y}}}
\DeclareMathOperator{\mhE}{{\scriptstyle\boldsymbol{\mathcal{E}}}}
\DeclareMathOperator{\mX}{{\scriptstyle\boldsymbol{\mathcal{X}}}}
\DeclareMathOperator{\mY}{{\scriptstyle\boldsymbol{\mathcal{Y}}}}
\DeclareMathOperator{\mZ}
{{\scriptstyle\boldsymbol{\mathcal{Z}}}}
\DeclareMathOperator{\mM}
{{\scriptstyle\boldsymbol{\mathcal{G}}}}
\DeclareMathOperator{\mD}
{{\scriptstyle\boldsymbol{\mathcal{D}}}}
\def\mJ{\mathcal{J}}
\def\mQ{\mathcal{Q}}
\def\mT{\mathcal{T}}
\def\mU{\mathcal{U}}
\def\mW{\mathcal{W}}
\def\bxi{\boldsymbol{\xi}}
\def\bzeta{\boldsymbol{\zeta}}
\DeclareMathOperator{\mA}{{\mathcal{A}}}
\DeclareMathOperator{\mB}{{\mathcal{B}}}
\def\mC{\mathcal{C}}
\def\mP{\mathcal{P}}
\def\mR{\mathbb{R}}
\def\bxi{\boldsymbol{\xi}}
\def\mE{\mathbb{E}}
\def\cblue{\textcolor{blue}}
\newtheorem{Lemma}{Lemma}
\newtheorem{Theorem}{Theorem}
\newtheorem{Assumption}{Assumption}
\definecolor{tableShade}{RGB}{245,248,252}
\newcolumntype{s}{>{\hsize=.0\hsize\centering\arraybackslash}X}
\newcolumntype{B}{>{\centering\arraybackslash}X}  
\newcolumntype{L}{>{\centering\arraybackslash}X} 
\title{\textbf{DAMA}: A Unified Accelerated Approach for Decentralized Nonconvex Minimax Optimization---Part I: Algorithm Development and Results}
\author{Haoyuan Cai,
        Sulaiman A. Alghunaim,
        and Ali H. Sayed
\thanks{Haoyuan Cai and Ali H. Sayed are with  the Institute of Electrical and Micro Engineering,  École Polytechnique Fédérale
de Lausanne, Switzerland (emails: \{haoyuan.cai, ali.sayed\}@epfl.ch).}
\thanks{Sulaiman A. Alghunaim is with Kuwait University, Kuwait (email:
sulaiman.alghunaim@ku.edu.kw).}
}
\begin{document}
\maketitle

\begin{abstract}
In this work and its accompanying Part II \cite{cai2025dama2},
we develop an accelerated algorithmic framework, \textbf{DAMA} (\textbf{D}ecentralized \textbf{A}ccelerated \textbf{M}inimax \textbf{A}pproach), for nonconvex Polyak–Łojasiewicz (PL) minimax optimization over decentralized multi-agent networks.
Our approach seamlessly integrates online and offline stochastic minimax algorithms with various decentralized learning strategies, yielding a versatile and unified framework that offers greater flexibility than existing methods.
Our unification is threefold:
i) We propose a unified decentralized learning strategy for minimax optimization over multi-agent networks that subsumes existing bias-correction methods (e.g., gradient tracking) and introduces new variants that achieve tighter network-dependent bounds,
ii) we introduce a new probabilistic gradient estimator, termed \textbf{GRACE} (\textbf{GR}adient \textbf{AC}celeration \textbf{E}stimator), which unifies the implementation of several momentum-based methods (e.g., STORM) and loopless variance-reduction techniques (e.g., PAGE, Loopless SARAH) for constructing accelerated gradients within \textbf{DAMA}. Moreover, \textbf{GRACE} is broadly applicable to a wide range of stochastic optimization problems, making it of independent interest, and iii) through a unified analytical framework, rather than case-by-case arguments, we establish a general performance bound for the proposed approach, achieving new state-of-the-art results with the best-known sample complexity in this setting.
To the best of our knowledge, \textbf{DAMA} is the first framework to achieve a multi-level unification of decentralized learning strategies and accelerated gradient techniques, thereby addressing a key gap in the literature. 
In Part I of this work, we focus on the algorithm development of \textbf{DAMA} and the presentation of the main results. In its accompanying Part II \cite{cai2025dama2}, 
we develop the theoretical analysis for \textbf{DAMA}  to substantiate the main results. Moreover,
we validate the effectiveness of \textbf{DAMA} and its variants across different network topologies using both synthetic and real-world datasets.

\end{abstract}

\begin{IEEEkeywords}
Minimax optimization, unified decentralized strategies, probabilistic gradient estimator, acceleration technique
\end{IEEEkeywords}

\section{Introduction}
\label{sec:introduction}

Minimax optimization underpins a wide range of applications, including generative adversarial networks (GANs) \cite{goodfellow2014gan}, reinforcement learning \cite{li2019robust}, domain adaptation \cite{acuna2021f}, and distributionally robust optimization \cite{sinha2017certifying}. 
At the same time, the ever-increasing scale of modern artificial intelligence (AI) tasks necessitates distributing data and computation across many agents (e.g., GPU clusters), highlighting the need for efficient decentralized minimax learning algorithms with provable convergence guarantees.
Motivated by this fact, we aim to solve nonconvex-nonconcave minimax optimization problems by utilizing the collective power of $K \ge 1$ cooperating agents, which are connected by a network topology.
In particular, we consider the following distributed stochastic minimax problem:
\begin{subequations}
\begin{align}
&\min_{x \in \mathbb{R}^{d_1}}
\max_{y \in \mathbb{R}^{d_2}}
J(x, y)
= \frac{1}{K}\sum_{k=1}^K J_k(x,y),   
\label{main:sec1:problem}\\
&\text{where }J_k(x, y) = 
\begin{cases}
\mathbb{E}_{\bzeta_k}[Q_k(x, y; \bzeta_k)] & \textbf{(Online)}\\
\frac{1}{N_k} \sum_{s=1}^{N_k} Q_{k}(x, y; \bzeta_{k}(s)) & \textbf{(Offline)}
\end{cases}\label{main:sec1:problem_stochastic},
\end{align}
\end{subequations}
where each agent $k$ seeks an optimal solution through local interactions with its immediate neighbors under a decentralized network topology. Here, the global risk (cost) function  $J(x,y)$ is defined as the average of $K$ local objectives $J_k(x,y)$, each privately owned by an agent $k$.  We assume that $J(x,y)$ is smooth and nonconvex in $x \in \mR^{d_1}$
but 
{\em possibly} nonconcave in $y \in \mR^{d_2}$ while satisfying the
Polyak-Łojasiewicz (PL) condition
(see  \autoref{main:assumption:costfunction} or reference \cite{karimi2016linear}).
The focus of this work is on two important learning scenarios, where each agent $k$ can draw independent and identically distributed (i.i.d.) samples: $\bzeta_k$ in the online streaming setting or $\bzeta_k(s)$ in the offline finite-sum (or empirical) setting, with $\bzeta_k(s)$ denoting the $s$-th local sample of agent $k$. These samples are used to evaluate the local instantaneous loss $Q_k(x,y;\bzeta_k)$ or $Q_k(x,y;\bzeta_k(s))$. Unlike prior works, we address {\em both} decentralized {\em online} stochastic and decentralized {\em offline} finite-sum scenarios. The former models online streaming  regimes, while the latter fits many practical deep learning tasks.

Recent years have witnessed considerable progress in the algorithmic and theoretical development of minimax optimization \cite{lin2020gradient,zhang2020single,yang2022faster, jin2020local, luo2020stochastic}.
Given that general minimax problems are notoriously difficult \cite{daskalakis2021complexity, hsieh2021limits}, prior works have primarily focused on a subclass of \eqref{main:sec1:problem}---\eqref{main:sec1:problem_stochastic},
which impose structural assumptions on the risk function.
In particular, nonconvex–(strongly) concave minimax optimization has received significant attention
\cite{lin2020gradient,luo2020stochastic, lin2020near, zhang2020single}, since its nonconvex formulation is well-suited for nonlinear models such as neural networks that are essential for complex learning tasks.
In this work,  however, we adopt a nonconvex–PL formulation that relaxes the strong concavity requirement on the $y$-variable, with the PL condition serving as a more suitable assumption for over-parameterized neural networks \cite{liu2022loss}.
Hence, our assumptions can more accurately represent important scenarios, such as cases where $x$ and $y$ parameterize different neural networks and one is over-parameterized.
Under a nonconvex-PL formulation, vanilla gradient descent–ascent (GDA) with a shared step size for both optimization variables tends to diverge since a uniform step size produces unbalanced dynamics under the asymmetric nonconvex–PL structure of $J(x,y)$; see the diverging example in \cite{yang2022nest}.
To address this issue, prior works adopted a two-time-scale step-size policy to calibrate the learning dynamics, where the step-size ratio is strictly constrained by polynomial orders of the condition number $\kappa \triangleq L_f/\nu$, where $L_f$ is the smoothness constant of the cost and $\nu$ is the strong concavity or PL constant.
A further challenge in stochastic nonconvex-PL minimax optimization is that the loss gradients of both variables are often computed from the same sample, resulting in correlated noise that breaks the martingale difference property.
This limitation may render the use of large batch sizes necessary in the stochastic GDA \cite{lin2020gradient, cai2024diffusion}, unless gradient noise is controlled through noise-reduction techniques \cite{cai2024accelerated,xian2021faster,chen2024efficient,sharma2022federated} or alternative sampling schemes \cite{yang2022faster,cai2024diffusion}.
Notably, several acceleration techniques have demonstrated effectiveness in reducing gradient noise while enhancing convergence speed \cite{luo2020stochastic, cai2024accelerated, huang2023near, huang2023enhanced, zhang2024jointly,chen2024efficient,gao2022decentralized}.
However,
{\em the majority of existing acceleration techniques are often implemented and analyzed independently, }
leaving the field without a unified gradient estimation strategy. This limitation motivates us to ask:

\textit{\textbf{Q1: Can we design a unified gradient estimator that enables fast and efficient learning for stochastic minimax optimization?}}

We answer this question by proposing a new probabilistic gradient estimator, which unifies several key acceleration techniques, see \autoref{main:unified_gradient} for a detailed discussion.
We further extend this strategy to the multi-agent setting, which brings additional challenges. In particular, sparse communication topologies and data heterogeneity across the network complicate algorithmic design and can markedly impair learning performance.
To mitigate the data heterogeneity issue, bias-correction strategies such as gradient tracking (GT)  can be employed \cite{xu2015augmented, nedic2017achieving}.
Actually,
{\em existing minimax works rely heavily on the GT strategy to combat the imbalanced data challenge} \cite{xian2021faster,huang2023near,ghiasvand2025robust, gao2022decentralized, zhang2024jointly, cai2025communication}. Nevertheless, this GT scheme is not guaranteed to achieve the state-of-the-art performance under a sparsely connected network \cite{alghunaim2022unified}.
Alternative strategies, including Exact Diffusion (ED) \cite{yuan2018exact,yuan2018exact2} and EXTRA \cite{shi2015extra}, have been shown to achieve better rates compared with GT  for minimization problems \cite{alghunaim2022unified,sayed2022inference}. Yet these alternative methods remain underexplored in the minimax setting, and it is uncertain whether their advantages carry over to the minimax optimization domain. This limitation prompts our second question:

\textit{\textbf{Q2:
Can we develop a generic approach that integrates the unified probabilistic gradient estimator with bias-correction decentralized learning strategies (e.g., ED and EXTRA) to achieve enhanced robustness against sparse communication networks and heterogeneous local data?}}

To this end, we present \textbf{DAMA}—a novel \textbf{D}ecentralized \textbf{A}ccelerated \textbf{M}inimax \textbf{A}pproach that seamlessly integrates our unified gradient estimator with mainstream decentralized learning strategies into a single, versatile framework.
In this way, the widely adopted GT method in \cite{xian2021faster,huang2023near,ghiasvand2025robust, gao2022decentralized, zhang2024jointly, cai2025communication}
can be viewed as a special instance of \textbf{DAMA}.  Leveraging the unified gradient estimator, our approach accommodates both online and offline learning settings. While such flexibility is essential in practice, rigorous convergence guarantees remain equally crucial. Existing works \cite{zhang2024jointly,xian2021faster,gao2022decentralized,liu2024decentralized} only established sample complexities that depend on the network spectral gap $\mathcal{O}(1-\lambda)$ in a weaker manner than the best-known results in the minimization setting \cite{xin2021hybrid,alghunaim2022unified}, where $\lambda$ denotes the mixing rate of the network (e.g., 0 for fully connected network and approaches 1 for sparse networks). Since the network spectral gap is extrinsic to the problem itself, this gap in performance bound suggests that sharper results should be attainable. These insights motivate our final question:

\textit{\textbf{Q3: Is it possible to develop a unified analysis of DAMA with improved convergence guarantees?}}

We answer this question in the affirmative by establishing rigorous convergence guarantees for the proposed framework. Overcoming these challenges is far from trivial, as they do not combine in a simple additive manner; instead, our analysis requires delicate, nontrivial techniques. In particular, we develop a transformed recursion and carry out the analysis in the transformed domain rather than directly relying on the original one (see \autoref{main:sec:tranasformation}). 

We address Q1 and Q2 in Part I of this work and lay the groundwork for a unified theoretical analysis that will answer Q3 in Part II \cite{cai2025dama2}.

\subsection{Related works}

Early works on minimax optimization primarily considered settings with exact gradient oracles \cite{nouiehed2019solving, jin2020local, lin2020near, lu2020hybrid, xu2023unified, yang2020catalyst,zhang2020single}, where many methods required solving a maximization subproblem at each iteration or resorting to nested-loop schemes \cite{nouiehed2019solving,jin2020local,lin2020near,lu2020hybrid,yang2020catalyst}.
Recently, a body of works has focused on the online stochastic setting, including methods \cite{lin2020gradient,yang2022faster,mahdavinia2022tight,luo2020stochastic,cai2024accelerated} tailored to the single-node case. In particular, \cite{lin2020gradient} proved convergence of two-time-scale stochastic GDA for smooth nonconvex–(strongly) concave minimax problems.
The work \cite{yang2022faster} extended the smoothed alternating GDA \cite{zhang2020single} to a stochastic nonconvex-PL setting, showing improved sample complexity compared to stochastic GDA.
The reference
\cite{mahdavinia2022tight} focused on classical extragradient-type algorithms for minimax optimization.
Note that \cite{yang2022faster,zhang2020single,mahdavinia2022tight} are closely related to proximal-type methods, which may help stabilize updates in ill-conditioned regimes (i.e., with large condition number $\kappa$). However, they do not improve 
{\color{black} the dependency of the} sample complexity on the accuracy tolerance $\varepsilon$, as this is primarily dictated by stochastic noise variance.
Incorporating variance-reduction techniques to suppress gradient noise can substantially improve convergence rates \cite{yang2020global,luo2020stochastic,zhang2022sapd}, with many minimax algorithms extending these methods originally developed for minimization \cite{reddi2016stochastic,nguyen2017sarah}. While these techniques guarantee accelerated convergence in theory, they face practical limitations under memory constraints. Moreover, these methods often depend on nested-loop structures with predetermined intervals for large-batch gradient computations, making it difficult to adapt to noise variance in an online learning scenario. Momentum-based methods \cite{cutkosky2019momentum,tran2022better,sayed2022inference} offer a simple yet flexible mechanism for controlling gradient noise and are more memory-efficient than classical variance-reduction schemes.
They have also been shown to accelerate stochastic minimax optimization \cite{huang2023enhanced,cai2024accelerated}.
In contrast to earlier studies, this paper proposes a unified gradient estimator that integrates several powerful noise-control strategies. The resulting strategies offer practitioners the flexibility to tailor method selection to suit practical scenarios and to adapt more effectively to varying noise levels.

{\color{black} Another growing} body of research has also been devoted to the development of distributed and decentralized methods for solving minimax problems.
Federated minimax learning represents an important scenario, in which a central entity coordinates the learning process of all agents \cite{wu2024solving,deng2021local,sharma2022federated,shen2024stochastic,cai2025communication}. The algorithms studied therein are mainly derived by extending momentum methods \cite{cutkosky2019momentum,cai2024accelerated} and (smoothed alternating) GDA \cite{yang2022faster,zhang2020single} from the single-agent context. 
Different from these studies, we investigate minimax optimization under  {\em decentralized setups} (without a central coordinator), a paradigm that enables greater scalability in large-scale applications.
Recent progress on decentralized minimax optimization can be found in \cite{cai2024diffusion, xian2021faster, ghiasvand2025robust,gao2022decentralized, huang2023near, cai2025communication, liu2024decentralized,chen2024efficient,mancino2023variance}.
Among these works,
 reference \cite{cai2024diffusion}
proposed a
decentralized
optimistic 
minimax method based on adapt-then-combine diffusion strategies \cite{alghunaim2020decentralized, sayed2022inference}.
A subsequent work \cite{huang2025optimistic} investigated the optimistic algorithm with the GT strategy.
Although optimistic gradients help stabilize training in a multi-agent minimax problem, their convergence continues to be limited by stochastic gradient noise. Another line of research developed two-time-scale momentum methods with GT \cite{huang2023near,xian2021faster}, which efficiently reduce gradient noise and accelerate convergence.
It is worth emphasizing that the dominant sample complexity reported in \cite{xian2021faster} depends on the network spectral gap $\mathcal{O}(1-\lambda)$, whereas momentum-based GT algorithms in the minimization domain \cite{xin2021hybrid} do not suffer from this dependence, suggesting that sharper results should be attainable.
The works 
\cite{gao2022decentralized, mancino2023variance,chen2024efficient, mancino2023variance} considered variance reduction \cite{li2020convergence, li2021page} with GT or fast mixing
to improve communication complexity.
It should be noted that \cite{chen2024efficient} relies on the network’s spectral information to implement a fast-mixing strategy with multiple communications per iteration, and moreover, it requires one to periodically compute batch gradient, which reduces its practicality.
The works \cite{ghiasvand2025robust} and \cite{cai2025communication}
considered gradient/momentum tracking with local updates, respectively, and the latter achieved enhanced communication complexity.
Different from prior works, we develop a {\em unified} gradient estimator that subsumes both accelerated momentum and variance reduction as special cases. We further integrate this estimator into a {\em unified} decentralized learning framework, yielding numerous algorithmic variants that, to the best of our knowledge, have not been explored in the minimax context. More importantly, our unified method achieves improved sample complexity over existing works. Moreover, for certain variants, we establish, for the first time, meaningful results on the transient time required to achieve linear speedup where the convergence rate improves linearly with number of agents $K$ (see Table \ref{tab:algo-comparison:transient} in subsection \ref{main:sec:main_result}).

\subsection{Main contributions}

We summarize the main contributions of this work and its accompanying Part II \cite{cai2025dama2} as follows:
\begin{itemize}
    \item 
    Unlike prior works that explore limited synergies between decentralized learning strategies and gradient acceleration techniques,  we bridge this gap by proposing a unified approach
    \textbf{DAMA} (\textbf{D}ecentralized \textbf{A}ccelerated \textbf{M}inimax \textbf{A}pproach) based on a new probabilistic gradient estimator 
    \textbf{GARCE} (\textbf{GR}adient \textbf{AC}celeration \textbf{E}stimator).
Our framework integrates a decentralized learning design, extended from the minimization setting \cite{alghunaim2020decentralized,sayed2022inference,alghunaim2022unified} to minimax optimization, with \textbf{GRACE}, a novel probabilistic gradient estimator that unifies several prominent variance reduction schemes. Notably, \textbf{GRACE} can be applied to other optimization domains, making it of independent interest beyond minimax problems. By combining these two components, our approach yields a family of new algorithmic variants not covered in the existing literature. The resulting methods are simple, flexible, free of double loop structures, and are easily adaptable to noise variance.

\item 
In contrast to prior works that analyzed each specific strategy separately \cite{xian2021faster,huang2023near,gao2022decentralized,zhang2024jointly,cai2025communication,cai2024diffusion,liu2024decentralized,mancino2023variance}, we conduct a direct and unified analysis for the proposed algorithm. For the first time, the performance of several well-known accelerated methods, including Loopless SARAH \cite{li2020convergence}, PAGE \cite{li2021page}, and STORM \cite{cutkosky2019momentum}, in conjunction with diverse decentralized strategies \cite{sayed2022inference}, is jointly captured under a generic performance bound. This analysis is highly nontrivial and requires the development of a new transformed recursion that is distinct from the gradient-based work \cite{alghunaim2022unified, alghunaim2020decentralized,sayed2022inference}. Moreover, our problem domains are inherently more challenging than minimization problems, as evidenced by the hyperparameter conditions.

\item 
We present a range of results derived from the generic performance bounds of the unified approach, many of which have not been established previously.  Table~\ref{tab:algo-comparison:complexity} summarizes some of our results compared to existing works. For more comprehensive results, please refer to Table~\ref{tab:algo-comparison}.
 To the best of our knowledge, we obtain the best-known guarantees for multiple important instances. For example, our sample complexity of STORM+ATC-GT is $\mathcal{O}\Big(
\frac{\kappa^3\varepsilon^{-3}}{K} + \frac{\kappa^2\varepsilon^{-2}}{(1-\lambda)^3}\Big)$, where the dominant term is independent of the network spectral gap $\mathcal{O}(1-\lambda)$. This stands in sharp contrast to prior results \cite{xian2021faster}, as shown in Table~\ref{tab:algo-comparison:complexity}, where the dominant term depends on $\mathcal{O}(1-\lambda)$. The only exception is DREAM~\cite{chen2024efficient}, however, it requires prior knowledge of the network’s spectral properties to implement a fast mixing strategy involving multiple communications per iteration.

 We also show that the STORM+ED variant achieves a per-agent sample complexity of
 $\mathcal{O}\Big(
\frac{\kappa^3\varepsilon^{-3}}{K} + \frac{\kappa^2\varepsilon^{-2}}{(1-\lambda)^2}
 \Big)$ in the online stochastic setting, where the dependence on the spectral gap $\mathcal{O}(1-\lambda)$ improves over GT methods, which aligns with the results in the minimization setting \cite{alghunaim2022unified}. 
  For the Loopless SARAH+ED variant,
its dominant sample complexity $\mathcal{O}\Big(
\frac{\kappa^2\sqrt{N}\varepsilon^{-2}}{K}
 \Big)$ improves upon 
 \textcolor{black}{the prior state-of-the-art results established in}
 DREAM \cite{chen2024efficient} by a factor of $\mathcal{O}(\sqrt{K})$ times when $N$ is large enough.
Importantly, our algorithm avoids the need for the spectral information of the mixing matrix for explicit implementation and does not require the multi-step communication used in DREAM. 

\begin{table*}[!t]
\centering
\begin{threeparttable}
\caption{
Sample complexity comparison between existing algorithms and \textbf{DAMA} for decentralized stochastic nonconvex–strongly concave/PL minimax optimization.}
\footnotesize
\label{tab:algo-comparison:complexity}
\rowcolors{3}{tableShade}{white}
\setlength{\tabcolsep}{4pt}
\renewcommand{\arraystretch}{1.25}
\begin{tabularx}{\linewidth}{l c c B}
\toprule
\makecell{\textbf{Algorithm}} & 
\makecell{\textbf{PS}$^\circ$} & 
\makecell{\textbf{Studied before?}}& 
\makecell{\textbf{Sample Complexity}$^{\ddagger}$}  \\
\midrule
\textbf{Existing works} &&&\\
DM-HSGD\cite{xian2021faster}&On& Yes& $\mathcal{O}\Big(
\frac{\kappa^3\varepsilon^{-3}}{K(1-\lambda)^2}
\Big)$ \\
DGDA-VR \cite{zhang2024jointly}&On&Yes& $\mathcal{O}\Big(\frac{\kappa^3\varepsilon^{-3}}{(1-\lambda)^2}\Big)$ \\
DSGDA\cite{gao2022decentralized}&Off&Yes& $\mathcal{O}\Big(\frac{\kappa^3\sqrt{N}\varepsilon^{-2}}{(1-\lambda)^2}\Big)$ \\
DREAM \cite{chen2024efficient}& On/Off &Yes
& $\mathcal{O}\Big(
\frac{\kappa^3\varepsilon^{-3}}{K}
\Big)$ / $\mathcal{O}\Big(
\frac{\kappa^2\sqrt{N}\varepsilon^{-2}}{\sqrt{K}}
\Big)$ 
\\
Local-HMMT\cite{cai2025communication}&On&Yes& $\mathcal{O}\Big(\frac{\kappa^3\varepsilon^{-3}}{K(1-\lambda)^{1.5}}\Big)$
\\
\midrule 
\textbf{Proposed (DAMA)}&&&
\\
STORM+ED (\cite[Collorary 1]{cai2025dama2}) & On & No &  $\mathcal{O}\Big(
    \frac{\kappa^3\varepsilon^{-3}}{K}+\frac{\kappa^2\varepsilon^{-2}}{(1-\lambda)^2} +\frac{\kappa^{1.5} \lambda^{1.5}K^{0.5}\varepsilon^{-1.5}}{(1-\lambda)^{9/4}}
\Big)$ \\
STORM+EXTRA (\cite[Collorary 2]{cai2025dama2})       & On  & No &    $\mathcal{O}\Big(
    \frac{\kappa^3\varepsilon^{-3}}{K}+\frac{\kappa^2\varepsilon^{-2}}{(1-\lambda)^2} +\frac{\kappa^{1.5}K^{0.5}\varepsilon^{-1.5}}{(1-\lambda)^{9/4}}
\Big)$\\
STORM+ATC-GT (\cite[Collorary 3]{cai2025dama2})& On & Yes & $\mathcal{O} \Big(\frac{\kappa^3\varepsilon^{-3}}{K}+\frac{\kappa^2\varepsilon^{-2}}{(1-\lambda)^3}+\frac{\kappa^{1.5}\lambda^3K^{0.5}\varepsilon^{-1.5}}{(1-\lambda)^{3}} 
\Big)$ \\ 
PAGE+ED (\cite[Collorary 7]{cai2025dama2}) &On& No& $\mathcal{O}\Big( 
\frac{\kappa^3\varepsilon^{-3}}{(1-\lambda)^{1.5}K}
\Big)$
\\
PAGE+EXTRA (\cite[Collorary 8]{cai2025dama2})&On&No& $\mathcal{O}\Big( 
\frac{\kappa^3\varepsilon^{-3}}{(1-\lambda)^{1.5}K}
\Big)$
\\ 
PAGE+ATC-GT (\cite[Collorary 9]{cai2025dama2})&On& Yes & $\mathcal{O}\Big( 
\frac{\kappa^3\varepsilon^{-3}}{(1-\lambda)^{2}K}
\Big)$  \\
PAGE+ED (\cite[Collorary 4]{cai2025dama2})& Off& No &
$\mathcal{O}\Big(
\frac{\kappa^2\sqrt{N}\varepsilon^{-2}}{\sqrt{K}(1-\lambda)^{1.5}}
\Big)$\\
PAGE+EXTRA (\cite[Collorary 5]{cai2025dama2})&Off& No & $\mathcal{O}\Big(\frac{\kappa^2\sqrt{N}\varepsilon^{-2}}{\sqrt{K}(1-\lambda)^{1.5}} \Big)$ \\
PAGE+ATC-GT (\cite[Collorary 6]{cai2025dama2}) &Off & Yes & $\mathcal{O}\Big(\frac{\kappa^2\sqrt{N}\varepsilon^{-2}}{\sqrt{K}(1-\lambda)^{2}}\Big)$
\\
L-SARAH +ED (\cite[Collorary 10]{cai2025dama2})& Off &No& $\mathcal{O}\Big(\frac{\kappa^2 \sqrt{N} \varepsilon^{-2}}{K}
+ \frac{\kappa^2\varepsilon^{-2}}{(1-\lambda)^2}
+\frac{\sqrt{N}}{K}\Big)$  \\
L-SARAH +EXTRA (\cite[Collorary 11]{cai2025dama2})& Off &No& $\mathcal{O}\Big(\frac{\kappa^2 \sqrt{N} \varepsilon^{-2}}{K}
+ \frac{\kappa^2\varepsilon^{-2}}{(1-\lambda)^2}
+\frac{\sqrt{N}}{K}\Big)$ \\
L-SARAH +ATC-GT (\cite[Collorary 12]{cai2025dama2})& Off &No&  $\mathcal{O}\Big(\frac{\kappa^2 \sqrt{N} \varepsilon^{-2}}{K}
+ \frac{\kappa^2\varepsilon^{-2}}{(1-\lambda)^3}
+\frac{\sqrt{N}}{K}\Big)$
\\
\bottomrule
\end{tabularx}
\begin{tablenotes}[flushleft]
\footnotesize
\item 
Notes: $^\circ$PS = Problem setups (On = Online stochastic, Off = Offline finite-sum).   
$^\ddagger$ The last column reports the dominant per-agent sample complexity of each algorithm in achieving an $\varepsilon$-stationary point (see, e.g., the convergence criterion \eqref{main:convergence_criterion}).
L-SARAH = Loopless SARAH. Here, the comparison of sample complexity is meaningful, as it reflects the intrinsic complexity bound of the algorithm, whereas communication complexity can often be reduced through some standard techniques.
$\varepsilon$: accuracy level. $K$: the number of agents. For simplicity, we assume $N_k \equiv N, \forall k \in\{1, \ldots, K\}$ throughout the paper.  $\kappa$ is the condition number $\kappa \triangleq L_f/\nu$. $1-\lambda$ denotes the network spectral gap; the sparser the network, the smaller its value.
The technical details of \textbf{DAMA} can be found in Part II of this work \cite{cai2025dama2}.
\end{tablenotes}
\end{threeparttable}
\end{table*}

 For PAGE+ED variants,
 we achieve a sample complexity of $\mathcal{O}\Big(
 \frac{\kappa^2 \sqrt{N}\varepsilon^{-2}}{(1-\lambda)^{1.5}\sqrt{K}}
 \Big)$ in the offline setting, which outperforms \cite{ gao2022decentralized}
 under a sparsely connected network. 
Furthermore, we present some meaningful and new results of the transient time for STORM-based variants to achieve linear speedup in terms of the number of agents $K$ -- see TABLE \ref{tab:algo-comparison:transient}. Finally, we present new results for the minibatch version of PAGE, focusing on variants built on Loopless SARAH \cite{li2020convergence}. These variants need to compute minibatch gradient only between successive large-batch communication rounds, unlike \cite{chen2024efficient}.
For clarity, the performance bounds and transient times of key instances of \textbf{DAMA} are summarized in Tables \ref{tab:algo-comparison}--\ref{tab:algo-comparison:transient}, respectively.
Technical details for these results are provided in Part II \cite{cai2025dama2}.

\end{itemize}

\textbf{Notation.}
We use the normal font, e.g., $x$,
to denote a deterministic quantity.
We use bold font, e.g., 
$\bx$, to denote a stochastic quantity.
We use bold calligraphic font,
e.g, $\mX_{i} \triangleq \mbox{\rm col}\{\bx_{1,i}, \dots, \bx_{K,i}\}  \triangleq \{\bx_{k,i}\}_{k=1}^{K}\in \mR^{Kd_1}$, to denote
networked
stochastic  variable. Here,
$\bx_{k,i}$ stands for the state vector of  agent $k$ at communication round $i$.
In addition,
we use the superscripts  
$(\cdot)^x$
to denote a quantity associated with the 
$x$-variable,
such as the gradient estimator $\bm^x_{k,i}$ of agent $k$ at communication round $i$.
Note that we adopt the same nomenclature for the 
$y$-variable.
For a matrix $B$, we denote its null space by $\mathrm{null}(B)$
and the subspace spanned by its columns (range space) by $\mathrm{span}(B)$ .
The symbols $\mathrm{I}_{N}, \mathrm{0}_N, \mathds{1}_N$ denote the $N \times N$ identity matrix, the $N$-dimensional zero and one vectors, respectively. $\mathcal{N}_k$ denotes the neighboring agent index set of the agent $k$.
\(\mathrm{blkdiag}\{A, B, \dots\}\) denotes a block-diagonal matrix formed by placing the matrices \(A, B, \dots\) on the diagonal and zeros elsewhere.
Given a set $\mathcal{S}$, we use $|\mathcal{S}|$ to denote its cardinality.
Furthermore, $\|\cdot\|$ is the $\ell_2$-norm, 
$\langle \cdot, \cdot \rangle$ 
stands for the inner product in the Euclidean space, $(\cdot)^\top$ denotes the matrix transpose, and $\otimes$ denotes the Kronecker product.
$b = \mathcal{O}(a)$ means there exists a constant $c>0$ such that 
$b \leq c\,a$, while 
$b = \Theta(a)$ means there exist constants $c_1,c_2>0$ such that 
$c_1 a \leq b \leq c_2 a$.

\section{Algorithm development}
\label{sec:unifiedframework}


In this section, we present the essential algorithmic components for the unified approach. The content is organized into two parts: 1) in subsection \ref{main:unified_distributed}, we present a unified view of decentralized learning strategies by casting decentralized minimax optimization as a constrained two-level primal-dual optimization problem, which is then addressed through an iterative primal–dual approach, and 2) secondly, in subsection \ref{main:unified_gradient}, we revisit several prominent variance-reducing techniques in stochastic optimization and introduce a new probabilistic gradient estimator to unify them.

\subsection{Unified decentralized learning framework}
\label{main:unified_distributed}

Our argument builds on prior works in minimization \cite{alghunaim2022unified,sayed2022inference,alghunaim2020decentralized}, which provide the foundation for extending decentralized learning design to the minimax setting. 
For convenience, we introduce the following network-augmented block vector quantities:
\begin{align}
\mx = \mbox{\rm col}\{x_1, \dots, x_{K}\} \in \mR^{Kd_1},
\quad 
\my = \mbox{\rm col}\{y_1, \dots, y_{K}\} \in \mR^{Kd_2}.
\end{align}
Our goal is to solve \eqref{main:sec1:problem} in a decentralized manner with 
$K$ cooperating agents, where each agent has access only to its local data and communicates {\color{black} only with its neighbors over } a partially connected network. A fundamental requirement in many multi-agent systems is that all participating agents reach a consensus. That being said, we can rewrite \eqref{main:sec1:problem}  as the following equivalent constrained minimax problem
\begin{align}
&\min_{\mx \in \mathbb{R}^{Kd_1}}
\max_{\my \in \mathbb{R}^{Kd_2}}
\mJ(\mx, \my), \ {\rm s.t.} \begin{cases}  x_1= \cdots = x_{K} \in \mR^{d_1} \\ 
 y_1= \cdots = y_{K} \in \mR^{d_2}
 \end{cases},
\label{main:sec1:problem_stochastic2}
\end{align}
where 
\begin{align}
\mJ(\mx, \my)
\triangleq \frac{1}{K}\sum_{k=1}^K J_k(x_k,y_k).
\end{align} 
The same problem can also be written in the form of a constrained two-player game \cite{bacsar1998dynamic}:
\begin{align}
\begin{cases}
 \displaystyle    \min_{\mx \in \mR^{Kd_1}} \mJ(\mx, \my) \\
 \displaystyle      \min_{\my \in \mR^{Kd_2}} \textcolor{blue}{-}\mJ(\mx, \my) 
\end{cases} \quad   \operatorname{s.t.} 
\begin{cases}
    x_1= \cdots = x_{K} \in \mR^{d_1} \\
    y_1= \cdots = y_{K} \in \mR^{d_2}
\end{cases} 
\label{main:sec2.1:two_level_X}.
\end{align}
Problem  \eqref{main:sec2.1:two_level_X} seeks a stationary pair $(x^\star, y^\star)$ while respecting the consensus constraint. 
 To unify several decentralized strategies for solving \eqref{main:sec2.1:two_level_X},
we start by introducing auxiliary block matrices
$\mathcal{B}_x \in \mR^{Kd_1 \times Kd_1}, \mathcal{B}_y \in \mR^{Kd_2 \times Kd_2}, \overline{\mathcal{C}}_x \in \mR^{Kd_1 \times Kd_1}, \overline{\mathcal{C}}_y \in \mR^{Kd_2 \times Kd_2}$ that satisfy the equivalence conditions 
\begin{align}
\begin{cases}
&x_1 = \cdots = x_K = \bar{x}
\Longleftrightarrow \mathcal{B}_x \mx = 0 \Longleftrightarrow \overline{\mathcal{C}}_x \mx = 0 \\
& y_1 = \cdots = y_K =\bar{y} 
\Longleftrightarrow \mathcal{B}_y \my = 0 \Longleftrightarrow \overline{\mathcal{C}}_y \my = 0 \\
&\overline{\mathcal{C}}_x \text{ and }\overline{\mathcal{C}}_y  \text{ are positive semi-definite}
\end{cases}.
\label{main:BxbYyCxCy}
\end{align}
In other words, these matrices are chosen such that the nullspaces of ${\cal B}_x,{\cal B}_y, \overline{\cal C}_x, \overline{\cal C}_y$ are given by $\mathrm{null}(\mathcal{B}_x) = \mathrm{span}(\mathds{1}_{K}\otimes \bar{x}),
\mathrm{null}(\overline{\cal C}_x) = \mathrm{span}(\mathds{1}_{K}\otimes \bar{x})$,
 $\mathrm{null}(\mathcal{B}_y) = \mathrm{span}(\mathds{1}_{K}\otimes \bar{y}) $, and $\mathrm{null}(\overline{\cal C}_y) = \mathrm{span}(\mathds{1}_{K}\otimes \bar{y})$.
 We will explain further ahead how these matrices are chosen; for now, we view them as degrees of freedom that are available to the designer. Different choices for these quantities will lead to diffferent algorithms. 
Using \eqref{main:BxbYyCxCy},
we can rewrite
\eqref{main:sec2.1:two_level_X}
in the form of a constrained problem with linear constraints: {\color{black} (see \cite{sayed2022inference})}
\begin{align}
&\begin{cases}
  \displaystyle   \min_{\mx \in \mR^{Kd_1}} \mJ(\mx, \my) \\
    \displaystyle   \min_{\my \in \mR^{Kd_2}} \textcolor{blue}{-} \mJ(\mx, \my) 
\end{cases} \quad   \operatorname{s.t.} \begin{cases}
\mB_x  \mx = 0 \\  
\mB_y \my = 0
\end{cases}.
\label{main:sec2.1:two_level_X_formu1}
\end{align}
 We will now solve \eqref{main:sec2.1:two_level_X_formu1} by  relying on the augmented Lagrangian approach,  
which motivates us to introduce the following two-level primal-dual form:

\begin{align}
&\begin{cases}
 \displaystyle  \min_{\mx \in \mR^{Kd_1}} \max_{\md_x \in \mR^{Kd_1}} \Big\{\mJ(\mx, \my)+ \frac{1}{\mu_x} \md^\top_x \mB_x \mx + \frac{1}{2\mu_x}\|\mx\|^2_{\overline{\mC}_x} \Big\} ,\\
 \\
  \displaystyle     \min_{\my \in \mR^{Kd_2}} \max_{\md_y \in \mR^{Kd_2}} \Big\{ \textcolor{blue}{-}\mJ(\mx, \my)+ \frac{1}{\mu_y} \md^\top_y \mB_y \my + \frac{1}{2\mu_y}\|\my\|^2_{\overline{\mC}_y} \Big\},
\end{cases}
\label{main:sec2.1:two_level_X_formu2}
\end{align}
where $\mu_x>0, \mu_y>0$ are constants and {\color{black} the network-augmented vectors} $\md_x \in \mR^{Kd_1}$ and $\md_y \in \mR^{Kd_2}$
are dual variables associated with constraints $\mB_x \mx=0$ and $\mB_y \my=0$,  respectively. 
Furthermore, the rightmost quadratic terms impose penalties when the local models deviate from one another.
To solve \eqref{main:sec2.1:two_level_X_formu2}, we will adopt an iterative primal-dual decentralized approach as follows. We introduce two other free parameters in the form of block symmetric doubly-stochastic matrices $\mA_x$ and $\mA_y$ satisfying $\mA_x\mx=\mx$ and $\mA_y\my=\my$.  These matrices will define the combination policies over the graph structure and will control information sharing among the agents, as will become evident soon. Using the parameters $\{{\cal A}_x, {\cal A}_y, {\cal B}_x, {\cal B}_y, {\cal C}_x, {\cal C}_y\}$, we  write down the following primal-dual iterations over $i\geq 0$:
\begin{subequations}
\begin{align}
 &\mx_{i+1}= \mathcal{A}_x \Big[\mx_{i}
-\mu_x \nabla_x \mJ(\mx_i, \my_i) - \overline{\mathcal{C}}_x
\mx_i \Big]- \mathcal{B}_x \md_{x,i} ,
\label{main:sec2.1:two_level_Y_reformu1}
\\
 &\md_{x,i+1} 
 = \md_{x,i} 
 + \mathcal{B}_x \mx_{i+1} 
\label{main:sec2.1:two_level_Y_reformu2}
,\\
&\my_{i+1} = \mA_{y} \Big[\my_{i}
  {\cblue +}\mu_y\nabla_y  \mJ(\mx_i, \my_i) - \overline{\mathcal{C}}_y \my_i\Big]- \mathcal{B}_y \md_{y,i} ,
\label{main:sec2.1:two_level_Y_reformu3}
\\
 &\md_{y,i+1}
 = \md_{y,i} 
 +\mathcal{B}_y \my_{i+1}.
\label{main:sec2.1:two_level_Y_reformu4}
\end{align}
\end{subequations}
These recursions are obtained from \eqref{main:sec2.1:two_level_X_formu2} by writing down gradient descent  steps over the primal variables and gradient ascent steps over the dual variables.

Setting $\mathcal{C}_x = \mathrm{I}_{Kd_1} -\overline{\mathcal
{C}}_x, \mathcal{C}_y = \mathrm{I}_{Kd_2} -\overline{\mathcal
{C}}_y$,
we simplify recursions
\eqref{main:sec2.1:two_level_Y_reformu1}---\eqref{main:sec2.1:two_level_Y_reformu4}
to 
\begin{subequations}
\begin{align}
&\mx_{i+1}
= \mA_x (\mC_x \mx_i - \mu_x \nabla_x \mJ(
\mx_i, 
\my_i
)) - \mB_x \md_{x,i},
\label{main:sec2.1:determin_final1}
\\
(\textbf{DAMA}) \ &\my_{i+1}
=\mA_y
(\mC_y \my_i {\cblue +} \mu_y \nabla_y \mJ(
\mx_i, 
\my_i
)) - \mB_y \md_{y,i}, 
\label{main:sec2.1:determin_final2}\\
&\md_{x,i+1} = \md_{x,i} + \mB_x \mx_{i+1}, 
\label{main:sec2.1:determin_final3}\\
&\md_{y,i+1} = \md_{y,i}
+\mB_y \my_{i+1}.
\label{main:sec2.1:determin_final4}
\end{align}
\end{subequations}
In \eqref{main:sec2.1:determin_final2}, the 
$y$-gradient arises with the opposite sign of the 
$x$-gradient, reflecting conflicting optimization goals between the two variables due to the minimization and maximization steps.

$\bullet$ \textbf{Stochastic implementation}

In the stochastic environment, the actual gradients are not known and, therefore, we need to resort to approximate gradient calculations.  We denote the approximation for $\nabla_x \mJ (\mx, \my)$ and $\nabla_y \mJ (\mx, \my)$ by $\mM_{x,i}$  and $\mM_{y,i}$, respectively.  Using these approximations, we replace \eqref{main:sec2.1:determin_final1}---\eqref{main:sec2.1:determin_final4} by 
\begin{subequations}
\begin{align}
\mX_{i+1}
&= \mathcal{A}_x(\mathcal{C}_x \mX_i- \mu_{x}
\mM_{x,i}) - \mathcal{B}_x
\mD_{x,i} ,\label{main:X_update} \\
\mY_{i+1}
&= \mathcal{A}_y (\mathcal{C}_y \mY_i {\cblue + } \mu_{y}
\mM_{y,i}) - \mathcal{B}_y
\mD_{y,i}  ,\label{main:Y_update}\\
\mD_{x,i+1}
&= 
\mD_{x,i}
+\mB_x\mX_{i+1}\label{main:Ux_update} ,\\
\mD_{y,i+1}
&= 
\mD_{y,i}
+\mB_y\mY_{i+1}.
\label{main:Uy_update}
\end{align}
\end{subequations}

We will denote the individual entries of the block gradient approximations, as well as the entries of the primal and dual block variables by 
\begin{subequations}
\begin{align}
\mM_{x,i} &\triangleq \mbox{col}\{\bm^x_{1,i}, \dots, \bm^x_{K,i}\} \ \in \mR^{Kd_1}, \quad& \mM_{y,i} \triangleq & \mbox{col}\{ \bm^y_{1,i}, \dots, \bm^y_{K,i}\} \in \mR^{Kd_2}, \\
\mX_{i} &\triangleq  \mbox{\rm col}\{\bx_{1,i}, \dots, \bx_{K,i}\} \in \mR^{Kd_1}, \quad& 
\mY _{i}\triangleq & \mbox{\rm col}\{\by_{1,i}, \dots, \by_{K,i}\} \in \mR^{Kd_2},\\
\mD_{x,i} &\triangleq  \mbox{\rm col}\{\bd^x_{1,i}, \dots, \bd^x_{K,i}\} \in \mR^{Kd_1}, \quad& 
\mD_{y,i}\triangleq & \mbox{\rm col}\{\bd^y_{1,i}, \dots, \bd^y_{K,i}\} \in \mR^{Kd_2}.
\end{align}
\end{subequations} 
We provide some examples of choices for the design matrices in Table \ref{tab:matrix_choices}, along with the names of the learning strategies that correspond to these choices. These choices, and others as well, will be motivated in the discussions that follows.

\begin{table*}[!htbp]
\centering
\caption{Matrix choices for different decentralized {\em minimax} learning strategies. Below, $W \in \mR^{K\times K}$ is a symmetric, doubly stochastic matrix.}
\label{tab:matrix_choices}
\renewcommand{\arraystretch}{1.2}
\setlength{\tabcolsep}{3pt}
\begin{tabular}{l|cccccc}
\toprule
\diagbox[width=2.5cm]{\textbf{Strategy}}{\textbf{Choices}}
& $\mA_x$ & $\mA_y$ & $\mB_x$ & $\mB_y$ & $\mC_x$ & $\mC_y$ \\ 
\toprule 
ED     & $W \otimes \mathrm{I}_{d_1}$ & $W \otimes \mathrm{I}_{d_2}$ & $(\mathrm{I}_{Kd_1} - W \otimes \mathrm{I}_{d_1})^{1/2}$ & $(\mathrm{I}_{Kd_2} -W \otimes \mathrm{I}_{d_2})^{1/2}$ & $\mathrm{I}_{Kd_1}$ & $\mathrm{I}_{Kd_2}$ \\ 
EXTRA  & $\mathrm{I}_{Kd_1}$ & $\mathrm{I}_{Kd_2}$ & $(\mathrm{I}_{Kd_1} - W \otimes \mathrm{I}_{d_1})^{1/2}$ & $(\mathrm{I}_{Kd_2} - W \otimes \mathrm{I}_{d_2})^{1/2}$ & $W \otimes \mathrm{I}_{d_1}$ & $W \otimes \mathrm{I}_{d_2}$ \\
ATC-GT & $(W \otimes \mathrm{I}_{d_1})^2$ & $(W \otimes \mathrm{I}_{d_2})^2$ & $\mathrm{I}_{Kd_1} - W \otimes \mathrm{I}_{d_1}$ & $\mathrm{I}_{Kd_2} - W \otimes \mathrm{I}_{d_2}$  & $\mathrm{I}_{Kd_1}$ & $\mathrm{I}_{Kd_2}$  \\ 
semi-ATC-GT & $W \otimes \mathrm{I}_{d_1}$ & $W \otimes \mathrm{I}_{d_2}$ & $\mathrm{I}_{Kd_1} - W \otimes \mathrm{I}_{d_1}$ & $\mathrm{I}_{Kd_2} - W \otimes \mathrm{I}_{d_2}$  & $W \otimes \mathrm{I}_{d_1}$ & $W \otimes \mathrm{I}_{d_2}$  \\ 
non-ATC-GT & $ \mathrm{I}_{Kd_1}$ & $\mathrm{I}_{Kd_2}$ & $\mathrm{I}_{Kd_1} - W \otimes \mathrm{I}_{d_1}$ & $\mathrm{I}_{Kd_2} - W \otimes \mathrm{I}_{d_2}$  & $(W \otimes \mathrm{I}_{d_1})^2$ & $(W \otimes \mathrm{I}_{d_2})^2$  \\
\bottomrule
\end{tabular}
\end{table*}

Note that all the choices in the table 
are chosen in the form of the Kronecker product of an identity matrix with a matrix $W = [w_{k\ell}]$. 
To illustrate how the general framework we are developing can be implemented at the node level, we provides in Algorithm~\ref{alg:sample_basic:ED}---\ref{alg:sample_basic:EXTRA} several examples.

\begin{algorithm}[!htbp]
\footnotesize
\caption{\hspace{-2pt}: \ Minimax Exact Diffusion (ED) algorithm}
\label{alg:sample_basic:ED}
\begin{algorithmic}[1]
    \STATE \textbf{Initialize}: Setting $\bx_{k,0} = \bx_{k,-1}, \by_{k,0} =\by_{k,-1} \ \forall k \in\{1, \dots, K\}$,  and choose appropriate $\mu_{x}, \mu_{y}$ 
    \FOR{$i = 0, \dots, T$}
\FOR{every agent $k$ in parallel} 
\STATE 
\hspace{-1.5ex}{} 
\vspace{-1.5em}
\begin{align}
\bx_{k,i+1} &= \sum_{\ell \in \mathcal{N}_k} w_{k\ell} \Big(
 2\bx_{k,i} - \bx_{k,i-1} - \mu_x (\bm^x_{k,i} - \bm^x_{k,i-1})
\Big), \label{main:EDstrategy:step1}\\
\by_{k,i+1} &= \sum_{\ell \in \mathcal{N}_k} w_{k\ell} \Big(
2\by_{k,i} - \by_{k,i-1}\textcolor{blue}{+} \mu_y (\bm^y_{k,i} - \bm^y_{k,i-1})
\Big).
\label{main:EDstrategy:step2}
\end{align}
\vspace{-2em}
\ENDFOR
\STATE $i = i+1$
\ENDFOR
\end{algorithmic}
\end{algorithm}
\begin{algorithm}[!htbp]
\footnotesize
\caption{\hspace{-2pt}: \ Minimax Adapt-then-Combine Gradient Tracking (ATC-GT) algorithm}
\label{alg:sample_basic:ATC-GT}
\begin{algorithmic}[1]
    \STATE \textbf{Initialize}: Setting $\bx_{k,0} = \bx_{k,-1} , \by_{k,0} =\by_{k,-1} \ \forall k \in\{1, \dots, K\}$, and choose appropriate $\mu_{x}, \mu_{y}$  
    \FOR{$i = 0, \dots, T$}
\FOR{every agent $k$ in parallel} 
\STATE \hspace{-1.5ex}{} 
\vspace{-1.5em}
\begin{align}
\bg^x_{k,i} &= \sum_{\ell \in \mathcal{N}_k} w_{k\ell} \Big(
 \bg^x_{k,i-1} - \bm^x_{k,i-1} + \bm^x_{k,i}
\Big),   \label{main:GTstrategy:step1} \\
\bg^y_{k,i} &= \sum_{\ell \in \mathcal{N}_k} w_{k\ell} \Big(
 \bg^y_{k,i-1} - \bm^y_{k,i-1} + \bm^y_{k,i} \Big),
 \label{main:GTstrategy:step2}\\
\bx_{k,i+1} &= 
\sum_{\ell \in \mathcal{N}_k} w_{k\ell} \Big(
\bx_{k,i} - \mu_x \bg^x_{k,i}
\Big),  \label{main:GTstrategy:step3}\\
\by_{k,i+1} &= 
\sum_{\ell \in \mathcal{N}_k} w_{k\ell} \Big(
\by_{k,i} \textcolor{blue}{+} \mu_y \bg^y_{k,i}
\Big).
 \label{main:GTstrategy:step4}
\end{align}
\vspace{-2em}
\ENDFOR
\STATE $i = i+1$
\ENDFOR
\end{algorithmic}
\end{algorithm}
\begin{algorithm}[!htbp]
\footnotesize
\caption{ \hspace{-2pt}: \ Minimax EXTRA algorithm}
\label{alg:sample_basic:EXTRA}
\begin{algorithmic}[1]
    \STATE \textbf{Initialize}: Setting $\bx_{k,0} = \bx_{k,-1} , \by_{k,0} =\by_{k,-1}  \ \forall k \in\{1, \dots, K\}$, and choose appropriate $\mu_{x}, \mu_{y}$ 
    \FOR{$i = 0, \dots, T$}
\FOR{every agent $k$ in parallel} 
\STATE 
\hspace{-1.5ex}{} 
\vspace{-1.5em}
\begin{align}
\bx_{k,i+1} &= \sum_{\ell \in \mathcal{N}_k} w_{k\ell} \Big(
 2\bx_{k,i} - \bx_{k,i-1}\Big) - \mu_x (\bm^x_{k,i} - \bm^x_{k,i-1})  \label{main:EXTRAstrategy:step1}
,\\
\by_{k,i+1} &= \sum_{\ell \in \mathcal{N}_k} w_{k\ell} \Big(
2\by_{k,i} - \by_{k,i-1}\Big)\textcolor{blue}{+} \mu_y (\bm^y_{k,i} - \bm^y_{k,i-1}) \label{main:EXTRAstrategy:step2}
.
\end{align}
\vspace{-2em}
\ENDFOR
\STATE $i = i+1$
\ENDFOR
\end{algorithmic}
\end{algorithm}

It is important to note that prior works on minimax optimization have focused almost exclusively on the GT strategy \cite{xian2021faster,huang2023near,cai2025communication,ghiasvand2025robust, huang2025optimistic, zhang2024jointly}, leaving ED and EXTRA unexplored in this setting. Thus, Algorithms~\ref{alg:sample_basic:ED} and~\ref{alg:sample_basic:EXTRA} presented here represent new minimax algorithms.
Moving forward now, the key challenge is how to design efficient estimators for the gradient vectors, which is the subject of the next subsection.

\subsection{Unified probabilistic gradient estimator}
\label{main:unified_gradient}
In this subsection, we devise efficient strategies for constructing stochastic gradient approximators $\bm^x_{k,i}, \bm^y_{k,i}$. Our approach builds on two well studied families of algorithms: variance reduction techniques \cite{nguyen2017sarah,li2020convergence,li2021page,zhang2024jointly} and momentum methods \cite{cutkosky2019momentum,tran2022better,cai2024accelerated,sayed2022inference,huang2023enhanced,chen2024efficient}, both of which have demonstrated strong empirical performance in machine learning applications.
Momentum strategies are typically more memory efficient since they avoid large batch gradient computations. In contrast, variance reduction techniques periodically require high quality gradient estimates, often obtained through large batches, which can reduce their practicality for large data sizes. Nevertheless, in the finite sum setting, variance reduction methods can achieve better sample complexity than accelerated momentum when the sample size is below a threshold determined by the accuracy 
$\varepsilon$. Hence, we will consider both strategies in order to address different scenarios more effectively.
In what follows, we will demonstrate how the centralized minimax momentum strategy \cite{sharma2022federated} and various accelerated momentum methods \cite{cutkosky2019momentum,tran2022better,cai2024accelerated,sayed2022inference,huang2023enhanced,chen2024efficient} can be jointly represented within a single form.
At the same time, a rich body of variance reduction techniques have been developed in the literature \cite{nguyen2017sarah,luo2020stochastic,zhang2024jointly,li2020convergence,li2021page,yang2020global}, including classical double loop methods such as SARAH \cite{nguyen2017sarah} and SVRG-type \cite{reddi2016stochastic,luo2020stochastic,yang2020global}, as well as loopless variants such as Loopless SARAH \cite{li2020convergence} and PAGE \cite{li2021page}. The loopless variance reduction approach employs a gradient estimator that probabilistically alternates between large batch and mini-batch computations with a prescribed distribution. In this work, we primarily focus on loopless variance reduction techniques, as their simple design naturally suits online learning by allowing the switching probability to adapt to the noise level.
Before we revisit several efficient gradient estimators, we introduce the following
notation for compactness:
\begin{align}
\bz_{k,i} &\triangleq {\rm col}\{\bx_{k,i}, \by_{k,i}\}, \\
q_x(\bz_{k,i};\bxi_{k,i})&\triangleq
\nabla_xQ_k(\bx_{k,i-1},\by_{k,i-1};\bxi_{k,i}) -  
\nabla_x Q_k(\bx_{k,i},\by_{k,i};\bxi_{k,i}), \\
{\rm h}_x(\bz_{k,i}; \bxi_{k,i}) 
&\triangleq \Big[\nabla^2_x Q_k(\bx_{k,i}, \by_{k,i}; \bxi_{k,i}), 
\nabla^2_{xy} Q_k(\bx_{k,i}, \by_{k,i}; \bxi_{k,i})\Big]
\begin{bmatrix}
  \bx_{k,i-1}-\bx_{k,i}\\  
  \by_{k,i-1} - \by_{k,i}
\end{bmatrix},
\end{align}
where $\nabla^2_x(\cdot)$ 
and $\nabla^2_{xy}(\cdot)$ are the Hessian and Jacobian operators, respectively,  $\bz_{k,i}$ is a concatenated vector,  $q_x(\bz_{k,i};\bxi_{k,i})$ represents the difference of loss gradient around  
$\bz_{k,i}$ and its neighboring point, and 
$h_x(\bz_{k,i};\bxi_{k,i})$ is the partial second-order information around the same point.

\noindent $\bullet$ \textbf{Accelerated momentum}

We focus on the updating rule for the momentum vector $\bm^x_{k,i}$ associated with the $x$-variable of agent $k$. The form for the 
$y$-variable follows analogously.
One class of momentum methods for updating  $\bm^x_{k,i}$ in the minimax context can be constructed recursively as follows: 
\begin{align}
\bm^x_{k,i} &= (1-\beta_x)\Big[\bm^x_{k,i-1} - \gamma_1q_x(\bz_{k,i};\bxi_{k,i}) \Big] + \beta_x\nabla_x Q_k(\bz_{k,i}; \bxi_{k,i}),
\label{main:STORMexpression}
\end{align}
where $\gamma_1 \in \{0, 1\}$ is an integer; $\bxi_{k,i}=\bzeta_k$ in the online setting, and  $\bxi_{k,i}=\bzeta_k(s)$ in the offline setting for some $s \in [N_k]$; $\beta_x \in [0,1]$ is a smoothing factor. Note that 
by setting $\beta_x =1$,
we recover the stochastic gradient method studied in \cite{lin2020gradient} and by setting $\gamma_1 = 0$
and $\beta_x \in (0, 1)$, we recover the heavy-ball momentum method  \cite{sharma2022federated}.
Furthermore, by setting $\gamma_1 = 1$
and $\beta_x \in (0,1)$,
we get the accelerated momentum algorithm STORM, which was introduced in \cite{cutkosky2019momentum} and  later investigated in the minimax context by several works \cite{xian2021faster, huang2023near, huang2023enhanced, wu2024solving}.
STORM resembles heavy-ball momentum augmented with an additional correction term, namely, $q_x(\bz_{k,i};\bxi_{k,i})$.  
The additional term refines the past momentum by diminishing the influence of stale directions and reorienting it toward the latest direction with a new sample  $\bxi_{k,i}$, which effectively reduces gradient variance over iterations.
Using a first-order Taylor expansion of the gradient function $\nabla_xQ_k(\bx,\by;\bxi_{k,i})$ at  $(\bx_{k,i},\by_{k,i})$ yields
\begin{align}
&q_x(\bz_{k,i};\bxi_{k,i})
=
{\rm h}_x(\bz_{k,i};\bxi_{k,i}) 
+\mathcal{O}(\|\bz_{k,i-1} -\bz_{k,i}\|^2) 
\label{main:sec2.2:hessiancorrect},
\end{align}
where 
the increment of the weight can be approximated by $\mathcal{O}(\mu^2_x+\mu^2_y)$ under a small step size regime. That being said, the correction term
$q_x(\bz_{k,i};\bxi_{k,i})$
can be approximated by ${\rm h}_x(\bz_{k,i};\bxi_{k,i})$.
Replacing $q_x(\bz_{k,i};\bxi_{k,i})$ 
in \eqref{main:STORMexpression}
by ${\rm h}_x(\bz_{k,i};\bxi_{k,i})$, we arrive at another line of approach known as the Hessian-corrected momentum scheme, studied in the works \cite{tran2022better,cai2024accelerated, cai2025communication}.
This scheme achieves a convergence rate on par with STORM under the alternative assumption that the cost function is second-order Lipschitz.
In the small step-size regime, we {\color{black} assume} that both correction terms yield a similar effect, as the higher-order terms in \eqref{main:sec2.2:hessiancorrect} can be neglected.
Putting these schemes together, we obtain the following generic form:
\begin{align}
\bm^x_{k,i} 
&=(1-\beta_x)\Big[\bm^x_{k,i-1} -\gamma_1 q_x(\bz_{k,i};\bxi_{k,i}) - \gamma_2 h_x(\bz_{k,i};\bxi_{k,i})\Big]
+ \beta_x \nabla_x Q_k(\bz_{k,i};\bxi_{k,i}),
\label{main:unifiedmomentum}
\end{align}
where $\gamma_2 \in \{0, 1\}$.
By tuning $\gamma_1, \gamma_2 \in \{0,1\}$, we can activate either of the momentum methods. 

\noindent $\bullet$ \textbf{Loopless variance reduction technique}

Unlike momentum schemes, which rely on exponential averaging of past stochastic gradients, variance-reduction methods reduce noise by intermittently computing high-quality gradients with large batches.
Large-batch gradients are typically computed periodically in an outer loop as in the classical works \cite{reddi2016stochastic, nguyen2017sarah}. Recently, loopless variance-reduction methods have gained attention \cite{li2020convergence,li2021page}, as they eliminate the need for an explicit double-loop structure by probabilistically switching between two gradient estimators. 
To implement the strategy, 
we start by 
drawing 
a random Bernoulli variable $\bpi_{k,i} \sim  \operatorname{Bernoulli}(p)$ at communication round $i$ with a shared random seed across agents, where $p \in [0,1]$ is the probability of the event  $\bpi_{k,i} =1$, and then 
perform:
\begin{align}
 &\bm^x_{k, i} =
\begin{cases}\displaystyle
\frac{1}{|\mathcal{E}_{k,i}|} 
\sum_{\bxi_{k,i} \in \mathcal{E}_{k,i} }
\nabla_x Q_k(\bz_{k,i}; \bxi_{k,i}) & \quad 
 (\text{if } \bpi_{k,i} = 1)
 \\
 \displaystyle
 \bm^x_{k,i-1} -
\frac{1}{b}\sum_{\bxi_{k,i} \in  \mathcal{E}^\prime_{k,i} } q_x(\bz_{k,i};\bxi_{k,i})   &\quad  (\text{if } \bpi_{k,i} = 0).
\end{cases}
\label{main:looplesssarah}
\end{align}
Here, $\mathcal{E}_{k,i}$ denotes a  large-batch set of i.i.d. samples 
and $|\mathcal{E}_{k,i}| = B $ in the online setting, or a full batch with $|\mathcal{E}_{k,i}| = N_k$ in the finite-sum setting.
Moreover,
$\mathcal{E}^\prime_{k,i}$ is a mini-batch set of  i.i.d. samples with size $b$.
By setting $b = \mathcal{O}(1)$, construction 
\eqref{main:looplesssarah} reduces to the 
Loopless SARAH \cite{li2020convergence}, which 
probabilistically
alternates between the large-batch (or full-batch) gradient and SARAH.
By setting larger mini-batch size $b \le \mathcal{O}(\sqrt{|\mathcal{E}_{k,i}|})$,
we achieve 
 PAGE \cite{li2021page, chen2024efficient}, which probabilistically switches between a large-batch  (or full-batch) gradient and a minibatch variant of SARAH.
Furthermore, in the finite-sum setting, if we set the Bernoulli variable $p=1$, then 
\eqref{main:looplesssarah} becomes a deterministic gradient method \cite{lin2020gradient, jin2020local}.
Note that $p$ can also be made adaptive to the noise level in online scenarios, providing significant flexibility.
Typically, $p \ll 1$ is selected, which guarantees that large batch gradients are computed only rarely.

\noindent $\bullet$ \textbf{Unified probabilistic accelerated gradient estimator}

We can observe that the frequently performed part of \eqref{main:looplesssarah}, namely the step associated with the case $\bpi_{k,i} = 0$,
is structurally similar to strategy
\eqref{main:unifiedmomentum} (i.e., $\gamma_1 =1, \gamma_2 = 0, \beta_x =0, b=1$).
In order to unify the loopless variance reduction and momentum schemes,
we propose to replace the second part of 
\eqref{main:looplesssarah}
by a minibatch variant of \eqref{main:unifiedmomentum}.
By  doing so, we can draw
a random Bernoulli variable $\bpi_{k,i} \sim  \operatorname{Bernoulli}(p)$ at communication round $i$
and perform:
{\footnotesize
\begin{align}
&\bm^x_{k, i}=
\notag \\
& 
\begin{cases} \displaystyle
\frac{1}{|\mathcal{E}_{k,i}|} 
\sum_{\bxi_{k,i} \in \mathcal{E}_{k,i} }
\nabla_xQ_k(\bz_{k,i};\bxi_{k,i})
&(\text{if } \bpi_{k,i} = 1)  
 \\
 \\
 (1-\beta_x)\Big[\bm^x_{k,i-1} -
\frac{1}{b}\underset{{\bxi_{k,i} \in  \mathcal{E}^\prime_{k,i} }}{\sum}  \Big(\gamma_1q_x(\bz_{k,i};\bxi_{k,i}) 
+ \gamma_2  h_x(\bz_{k,i};\bxi_{k,i})
\Big)  
\Big] + \frac{\beta_x}{b} \underset{{\bxi_{k,i} \in  \mathcal{E}^\prime_{k,i} }}{\sum}
\nabla_x Q_k(\bz_{k,i};\bxi_{k,i}) \hspace{-1em}
&(\text{if } \bpi_{k,i} = 0),
\end{cases} 
\label{main:GRACE}
\end{align}
}
where the batch of $\mathcal{E}_{k,i}$ and
$\mathcal{E}^\prime_{k,i}$ are constructed similar to \eqref{main:looplesssarah}. 
 We note that 
the proposed scheme  \eqref{main:GRACE} provides a unified implementation for several momentum and loopless variance reduction schemes. 
As shown in Table \ref{main:table:hyperparameter}, choosing appropriate values for the hyperparameters $p, \beta_x, \beta_y, b, B, \gamma_1, \gamma_2$, all previously discussed special cases can be recovered. 
To the best of our knowledge, the above loopless scheme has not been previously explored under a unified form in the literature. For this reason, we refer to it as \textbf{GRACE} (\textbf{GR}adient \textbf{AC}celeration \textbf{E}stimator) throughout the paper.

\begin{table}[!htbp]
\centering
\caption{Hyperparameter choices used to unify existing gradient estimator strategy. Below, $N \equiv N_k$ is the local sample size; $\beta_x, \beta_y$ are smoothing factors; $\gamma_1, \gamma_2 \in \{0, 1\}$ are integer; $B \ge 1$ is the large batch size parameter; $b \ge 1$ is the minibatch size parameter; $p \in [0, 1]$ is Bernoulli parameter. We assume learning rates $\mu_x, \mu_y \in (0, 1)$.}
\label{main:table:hyperparameter}
\renewcommand{\arraystretch}{1.0}
\setlength{\tabcolsep}{3pt}
\begin{tabular}{l|cccccc}
\toprule
\diagbox[width=2.5cm]{\textbf{Strategy}}{\textbf{Choices}}
& $\gamma_1$ & $\gamma_2$ & $\beta_x, \beta_y$ & $B$ & $b$ & $p$ \\ 
\toprule 
GDA \cite{jin2020local, lin2020gradient}     & $0$ & $0$ & $ 0$ & $N$ & $0$ & $1$ \\ 
GDA\cite{jin2020local, lin2020gradient}  & $0$ & $0$ & $1$ & $0$ & $N$ & $0$ \\
Stochastic-GDA\cite{ lin2020gradient}  & $0$ & $0$ & $1$ & $0$  & $\mathcal{O}(1)$ & $0$  \\ 
HB Momentum \cite{sharma2022federated} & $0$ & $0$ & $(0,1)$ & $0$  & $\mathcal{O}(1)$ & $0$  \\ 
STORM \cite{cutkosky2019momentum,xian2021faster} & $ 1$ & $0$ & $(0,1)$ & $0$  & $\mathcal{O}(1)$ & $0$  \\
HC-Momentum \cite{cai2024accelerated} & $0$ & $1$ & $(0,1)$ & $0$  & $\mathcal{O}(1)$ & $0$  \\
Loopless SARAH \cite{li2020convergence}
& $1$ & $0$ & $0$ & $N$  & $\mathcal{O}(1)$ & $(0,1)$  \\
PAGE \cite{li2021page, chen2024efficient}
& $1$ & $0$ & $0$ & $N$  & $\mathcal{O}(\sqrt{N})$ & $(0,1)$ \\
\textbf{GRACE (Ours)} &$\{0, 1\}$ & $\{0, 1\}$ & $[0, 1]$& $[1, N]$ &
$[1, N]$ & $[0, 1]$ \\
\bottomrule
\end{tabular}
\end{table}

There exist other hybrid variants covered by \textbf{GRACE}, which  are not shown in Table \ref{main:table:hyperparameter}.
In this work, our goal is to achieve faster convergence, which requires setting either $\gamma_1=1$ or $\gamma_2=1$. 
Notably, two special cases of \textbf{GRACE}, namely, the configuration with $\beta_x ,\beta_y \in (0, 1),p = 0, b = 1, \gamma_1=1, \gamma_2=0$ \cite{cutkosky2019momentum,xian2021faster, huang2023near}, and the configuration with
$\beta_x ,\beta_y \in (0, 1), p = 0, b=1, \gamma_1=0, \gamma_2=1$ \cite{tran2022better,cai2024accelerated}, have been shown to yield improved convergence rates. This suggests that choosing $\gamma_1 =1$ or $\gamma_1 =1$ is essential for acceleration. We will therefore focus on \textbf{GRACE} with
 $\beta_x ,\beta_y \in [0, 1], p \in [0,1], b, B \in [1, N], \gamma_1=1,\gamma_2=0$,  which includes a broad range of previously unstudied cases and leads to the simplified form shown in \textbf{Algorithm} \ref{alg:sample}. 
The analyses with the hyperparameter case $\gamma_1 = 0, \gamma_2 =1$ can be developed following the approach in \cite{tran2022better,cai2024accelerated}.

\vspace{-1em}
\subsection{Unified decentralized accelerated minimax algorithm}
We now present our main algorithmic framework, called \textbf{DAMA}, which integrates 
the unified decentralized learning strategy
developed in subsection \ref{main:unified_distributed} and the
unified gradient strategy \textbf{GRACE}
developed in subsection \ref{main:unified_gradient}.
The description of \textbf{DAMA} is presented at a node-level in Algorithm \ref{alg:sample}.
This algorithm begins by initializing the appropriate values for local models and the hyperparameters $\mu_x, \mu_y,\beta_x, \beta_y, b,  p, B$  for each agent. We refer to Table \ref{main:table:hyperparameter} for possible choices. We note that
the Bernoulli parameter $p$ is set to a small value to ensure that  large-batch computation  will only be rarely activated. Each agent uses a shared random seed to generate the random variable $\bpi_{k,i} \sim {\rm Bernoulli}(p)$ at each communication round.  To ensure convergence, the ratio $\mu_x/\mu_y$ must remain below a certain threshold, which can be chosen empirically.
In addition, we set the minibatch size $b$ much smaller than $B$.
After setting appropriate initialization for all hyperparameters, the algorithm starts at communication round $i =0$
and repeatedly executes the steps outlined in lines 3 to 15 until the $T$ communication rounds are finished.
Specifically, in line 8, the \textbf{GRACE} strategy \eqref{main:GRACE} is employed to 
estimate the gradient direction associated with each variable:

$\bullet$
If $\bpi_{k,i}=1$, each agent draws $B$-batch i.i.d.~samples
    $\mathcal{E}_{k,i} = \{\bzeta_{k,1},  \dots, \bzeta_{k,B}\}$ (\textbf{online}) or local batch
    $\mathcal{E}_{k,i} =  \{\bzeta_{k}(1), \dots, \bzeta_{k}(N_k)\}$ (\textbf{finite-sum}), and then 
computes:
\begin{align} 
\bm^x_{k, i}
&= \textstyle 
\dfrac{1}{|\mathcal{E}_{k,i}|} 
\sum\limits_{\bxi_{k,i} \in \mathcal{E}_{k,i} }
\nabla_x Q_k(\bz_{k,i}; \bxi_{k,i})
, \label{main:DAMA:largebatch}\\ 
\bm^y_{k, i}
&= \dfrac{1}{|\mathcal{E}_{k,i}|} 
\sum\limits_{\bxi_{k,i} \in \mathcal{E}_{k,i} }
\nabla_y Q_k(\bz_{k,i}; \bxi_{k,i}).
\end{align}
$\bullet$ If $\bpi_{k,i} = 0$, each agent draws a 
   $b$-minibatch of i.i.d~samples  
$\mathcal{E}^\prime_{k,i}$ (\textbf{online/finite-sum})
and computes: 
\begin{align}
\displaystyle \bm^x_{k, i} &=
 (1-\beta_x)\Big(\bm^x_{k,i-1} -\dfrac{1}{b}\underset{{\bxi_{k,i} \in  \mathcal{E}^\prime_{k,i} }}{\sum}
\nabla_x Q_k(\bz_{k,i-1}; \bxi_{k,i})
\Big)  + \dfrac{1}{b}\underset{{\bxi_{k,i} \in  \mathcal{E}^\prime_{k,i} }}{\sum}
\nabla_x Q_k(\bz_{k,i};\bxi_{k,i}),  
\\
\displaystyle \bm^y_{k,i}&= 
 (1-\beta_y)\Big(\bm^y_{k,i-1}-
\dfrac{1}{b}\underset{{\bxi_{k,i} \in  \mathcal{E}^\prime_{k,i} }}{\sum}  \nabla_y Q_k(\bz_{k,i-1}; \bxi_{k,i})\Big) 
 + \dfrac{1}{b}\underset{{\bxi_{k,i} \in  \mathcal{E}^\prime_{k,i} }}{\sum}
\nabla_y Q_k(\bz_{k,i};  
\bxi_{k,i}). \label{main:DAMA:minibatch}
\end{align}
Once the gradient direction is formed for each agent, 
we use the strategies from 
Algorithms \ref{alg:sample_basic:ED}---\ref{alg:sample_basic:EXTRA}
to update the local variables.
We refer to Appendix \ref{appendix:special_realization}
for additional implementations.
\vspace{-1em}
\begin{center}
\begin{minipage}{0.8\textwidth}
\begin{algorithm}[H]
\footnotesize
\caption{\hspace{-1pt}: \textbf{DAMA} (Unified \textbf{D}ecentralized \textbf{A}ccelerated \textbf{M}inimax \textbf{A}lgorithm)}
\label{alg:sample}
\begin{algorithmic}[1]
    \STATE \textbf{Initialize}: $\bx_{k,0}= \bx_{k,-1} , \by_{k,0} = \by_{k,-1} \ \forall k \in \{1, \ldots,K\}$,
    Bernoulli parameter $p$, hyperparameters $\mu_{x}, \mu_{y}, \beta_{x}, \beta_{y}, b,B(\text{online}),$ $b_0 (\text{initial batch size}).$
    \FOR{$i = 0, \dots, T$}
    \FOR{agent $k$ in parallel} 
    \STATE 
\underline{\texttt{Estimate gradient}} \\
    \IF{$i=0$}
    \STATE
    Draw a $b_0$-minibatch of i.i.d. samples $\mathcal{E}_{k,0}$ and let $\bm^x_{k, -1} =0, \bm^y_{k,-1}=0$ and
\begin{align}
\bm^x_{k, 0}
&= \frac{1}{b_0} \sum_{\bxi_{k,0} \in \mathcal{E}_{k,0}} 
\nabla_x Q_k(\bz_{k,0}; \bxi_{k,0}), \quad
\bm^y_{k, 0}=\frac{1}{b_0} \sum_{\bxi_{k,0} \in \mathcal{E}_{k,0}}
\nabla_y Q_k(\bz_{k,0}; \bxi_{k,0}).   
\end{align}
\ELSE
    \STATE
    Draw a random variable $\bpi_{k,i} \sim \operatorname{Bernoulli}(p)$ with a shared random seed and 
    update $\bm^x_{k,i}, \bm^y_{k,i}$ according to \eqref{main:DAMA:largebatch}---\eqref{main:DAMA:minibatch}.
\ENDIF 
\STATE 
\underline{\texttt{Diffusion learning}}
\STATE 
If \textbf{ED}: update $\bx_{k,i+1}, \by_{k,i+1}$
using steps \eqref{main:EDstrategy:step1}---\eqref{main:EDstrategy:step2} from Algorithm \ref{alg:sample_basic:ED}. 
\STATE 
If \textbf{ATC-GT}: update $\bx_{k,i+1}, \by_{k,i+1}$
using steps \eqref{main:GTstrategy:step1}---\eqref{main:GTstrategy:step4} from Algorithm \ref{alg:sample_basic:ATC-GT}. 
\STATE 
If \textbf{EXTRA}: update $\bx_{k,i+1}, \by_{k,i+1}$
using steps \eqref{main:EXTRAstrategy:step1}---\eqref{main:EXTRAstrategy:step2} from Algorithm \ref{alg:sample_basic:EXTRA}.
\ENDFOR
\STATE $i = i+1$
\ENDFOR
\end{algorithmic}
\end{algorithm}
\end{minipage}
\end{center}
\section{Theoretical results}
\label{sec:analysis}
In this section, we report a general performance bound for \textbf{DAMA} (see Theorem \ref{main:theorem:main}) and refined bounds for specific strategies (see Table \ref{tab:algo-comparison}).
For clarity, we focus on the presentation of the main results in Part I, while the technical details and derivations are provided in Part II \cite{cai2025dama2}.
Since our approach unifies a broad range of learning methods, the analysis becomes considerably more intricate. To address this challenge, we reformulate the recursions by introducing a useful variable transformation that exploits a fundamental factorization of the transition matrix.
We then perform the theoretical analysis in the transformed domain. Our analyses rely on standard assumptions widely adopted in many minimax works \cite{yang2022faster,xian2021faster,huang2023enhanced, chen2024efficient, cai2025communication,cai2024diffusion, nouiehed2019solving}. We state these assumptions next.

\subsection{Assumptions}
We will seek  an approximate stationary pair $(\bx_{c,i}, \by_{c,i})$ such that the gradient norms evaluated at this point are bounded by a tolerance $\varepsilon$, i.e.,
\begin{align}
\label{main:convergence_criterion}
\mE\|\nabla_x J(\bx_{c,i}, \by_{c,i})\|^2 \le \varepsilon^2, \quad  \mE\|\nabla_y J(\bx_{c,i}, \by_{c,i})\|^2 \le \varepsilon^2,
\end{align}
where $(\bx_{c,i}, \by_{c,i})$ is taken as the network centroid defined by
\begin{align}
\bx_{c,i} \triangleq \frac{1}{K}\sum_{k=1}^K \bx_{k,i}, \quad 
\by_{c,i} \triangleq \frac{1}{K}\sum_{k=1}^K \by_{k,i}.
\end{align}
Such a stationary pair provides a first-order approximation to the Nash equilibrium.
For ease of comparison, we express the network centroid in block form as
\begin{align}
    \mX_{c,i} = \mathds{1}_K \otimes \bx_{c,i} \in \mR^{Kd_1},   \quad   \mY_{c,i} = \mathds{1}_K \otimes \by_{c,i} \in \mR^{Kd_2}.
\end{align}
To examine the convergence and performance guarantees of the \textbf{DAMA} strategy, we introduce the following assumptions. 
\begin{Assumption}[\textbf{Cost function}]
\label{main:assumption:costfunction}
The global cost function $J(x,y)$
is nonconvex in $x$ 
and nonconcave in $y$ but satisfies a
$\nu$-PL condition for any fixed $x$, i.e.,
\begin{align}
\|\nabla_y J(x,y)\|^2 \ge 2\nu\Big(\max_y J(x,y) - J(x,y)\Big),
\end{align}
where $\nu >0$ is a positive constant.
Furthermore, the maximum envelope function $P(x) = \max_y J(x, y)$ is lower bounded, i.e., 
\begin{align}
P^\star \triangleq \inf_x P(x) > - \infty.
\end{align}
\end{Assumption}
We assume the local loss gradients are smooth in the following sense.
\begin{Assumption}[\textbf{Expected smoothness}]
\label{main:assumption:expectedsmooth}
The gradients of the local loss function $Q_k(x,y;\bxi)$ are assumed to be $L_f$-Lipschitz (or $L_f$-smooth) in expectation, i.e., for any $x_1, x_2 \in \mathbb{R}^{d_1},y_1, y_2 \in \mathbb{R}^{d_2}$, there exists a constant $L_f > 0$ such that
\begin{align}
&\mathbb{E}\|\nabla_x Q_k(x_1,y_1;\bxi) - \nabla_x Q_k(x_2,y_2;\bxi)\|^2 \le L_f^2\big(\|x_1-x_2\|^2 + \|y_1-y_2\|^2\big) , \\ 
&\mathbb{E}\|\nabla_y Q_k(x_1,y_1;\bxi) - \nabla_y Q_k(x_2,y_2;\bxi)\|^2 \le L_f^2\big(\|x_1-x_2\|^2 + \|y_1-y_2\|^2\big). 
\end{align}
\end{Assumption}
Under Assumptions \ref{main:assumption:costfunction} and \ref{main:assumption:expectedsmooth}, we can establish a Danskin-type Lemma \cite{nouiehed2019solving} which implies that the maximum envelope function $P(x)$ is $L \triangleq (L_f+ \frac{\kappa L_f}{2})$-smooth  and 
$\nabla P(x) = \nabla_x J(x, y^{o}(x))$, where $ y^{o}(x)  =\arg\max_y J(x,y)$. These are essential properties underlying our analysis.
Since we consider stochastic learning, we impose the following standard assumptions on the stochastic loss gradients.
\begin{Assumption}[\textbf{Gradient noise process}]
\label{main:assumptions:gradientnoise}
The local stochastic loss gradients $\nabla_w Q_k(\bx_{k,i}, \by_{k,i}; \bxi_{k,i})$, where $ w \in \{x,y\}$, are assumed to be conditionally unbiased with bounded variance for all $k,i$,  
\begin{subequations}
\begin{align}
&\mE[\nabla_w Q_k(\bx_{k,i},\by_{k,i}; \bxi_{k,i}) \mid \mF_{i}] = \nabla_w J_k(\bx_{k,i},\by_{k,i}),
\\
&\mE[\|\nabla_w Q_k(\bx_{k,i},\by_{k,i}; \bxi_{k,i})  - \nabla_w J_k(\bx_{k,i},\by_{k,i})\|^2 \mid \mF_{i}] \le \sigma^2.
\end{align} 
where $\mF_i = \sigma({\bx_{k,j}, \by_{k,j} \mid k = 1,\ldots,K, j \le i})$ is the $\sigma$-algebra generated by the state vectors of all agents up to iteration $i$, and $\sigma^2 \geq 0$ is a constant. In addition, the random samples ${\bxi_{k,i}}$ are assumed to be spatially and temporally independently and identically distributed (i.i.d.).
\end{subequations}
\end{Assumption}
Since we focus on decentralized learning, we impose the following assumption on the combination matrices. 
\begin{Assumption}[\textbf{Combination matrices}]
\label{main:assumptions:combinationmatrix}
Consider a combination matrix $W \in \mathbb{R}^{K \times K}$ that is symmetric, doubly stochastic, and primitive (this latter condition is guaranteed if $W$ corresponds to a connected graph and at least one diagonal entry is nonzero).  Let 
\begin{align}
\mW_x \triangleq W \otimes I_{d_1} \in \mathbb{R}^{Kd_1 \times Kd_1}, \ \mW_y \triangleq W \otimes I_{d_2} \in \mathbb{R}^{Kd_2 \times Kd_2}.
\end{align} 
The matrices $\mA_x, \mB_x^2, \mC_x \in \mathbb{R}^{Kd_1 \times Kd_1}$ and $\mA_y, \mB_y^2, \mC_y \in \mathbb{R}^{Kd_2 \times Kd_2}$ are assumed to be polynomial functions of $\mW_x$ and $\mW_y$, respectively, such that
\begin{enumerate}[label=(\roman*)]
    \item $\mA_x, \mA_y, \mC_x, \mC_y$ are symmetric and doubly stochastic;
    \item $\mathrm{null}(\mB_x) = \mathrm{span}\{\mathds{1}_{K} \otimes \bar{x}\}, 
\mathrm{null}(\mB_y) = \mathrm{span}\{\mathds{1}_{K}\otimes \bar{y}\}$ for some nonzero vectors $\bar{x}\in \mR^{d_1}, \bar{y} \in \mR^{d_2}$.
\end{enumerate}
\end{Assumption}
The above assumption is also made in \cite{alghunaim2022unified, alghunaim2020decentralized, sayed2022inference} and is mild, since, as shown in Table~\ref{tab:matrix_choices}, it is satisfied by the matrix choices of all key special cases. Because the combination matrix $W$ is symmetric, doubly stochastic, and primitive, according to the Perron-Frobenius theorem \cite{horn2012matrix}, it has a simple eigenvalue at $1$, while all other eigenvalues have magnitude strictly less than $1$. Let $\{\lambda_i\}_{i=2}^K$  denote the eigenvalues of $W$ excluding $1$. The mixing rate for sharing local quantities is defined as the second-largest eigenvalue in magnitude, namely
\begin{align}
\lambda \triangleq \max_{i=2,\ldots,K} |\lambda_i| < 1.
\end{align}
\vspace{-1em}
\subsection{Fundamental transformation}
\label{main:sec:tranasformation}
To facilitate the theoretical analysis, we reformulate recursions \eqref{main:X_update}--\eqref{main:Uy_update} through a sequence of transformations. Detailed derivations of these transformations are provided in Appendix ~\ref{appendix:lineartransform}. Our first step is to introduce the following auxiliary block vector variables:
\begin{subequations}
\begin{align}
    \mZ_{x,i} &\triangleq \mu_x \mA_x\mM_{x,i} + \mB_x\mD_{x,i}-\mB^2_x\mX_{i} , \label{main:ZxExpression}\\
    \mZ_{y,i} &\triangleq {\cblue -}\mu_y \mA_y \mM_{y,i} + \mB_y \mD_{y,i}-\mB^2_{y} \mY_{i},
\label{main:ZyExpression}
\end{align}
\end{subequations}
and rewrite  \eqref{main:X_update}--\eqref{main:Uy_update} as
\begin{subequations}
\begin{align}
    \mX_{i+1} &= (\mA_x\mC_x - \mB^2_x)\mX_i - \mZ_{x,i} \label{main:x_update_rule},\\
     \mY_{i+1}&= (\mA_y\mC_y - \mB^2_y) \mY_i- \mZ_{y,i} \label{main:y_update_rule},\\
    \mZ_{x,i+1} &=  \mZ_{x,i}  + \mB^2_x \mX_i +\mu_x \mA_x(\mM_{x,i+1} - \mM_{x,i}) \label{main:zx_update_rule},\\
    \mZ_{y,i+1} &= \mZ_{y,i}   + \mB^2_y \mY_i {\cblue -} \mu_y \mA_y(\mM_{y,i+1} - \mM_{y,i}) 
\label{main:zy_update_rule}.
\end{align}
\end{subequations}
    The above reparameterization trick is inspired by \cite{alghunaim2022unified}. However, it is important to emphasize that our transformed recursions \eqref{main:x_update_rule}--\eqref{main:zy_update_rule} have fundamental differences from \cite{alghunaim2022unified}, both in the way of constructing the auxiliary variables and in the way of designing the gradient estimator.
First, we consider an accelerated gradient estimator while they consider a stochastic gradient. Second, they incorporate the true gradient at the network centroid into the auxiliary variables, whereas we rely on the gradient estimators $\mM_{x,i}, \mM_{y,i}$. 
In fact, we found that directly incorporating a similar reformulation to \cite{alghunaim2022unified} leads to loose performance bounds and fails to achieve linear speedup, as it artificially introduces undesired error terms. These observations motivated us to introduce the alternative construction \eqref{main:ZxExpression}---\eqref{main:ZyExpression}.

In what follows, we exploit properties of the combination matrix $W$ and apply a linear transformation to recursions \eqref{main:x_update_rule}---\eqref{main:zy_update_rule}.
To begin with, since  $W$ is symmetric, doubly-stochastic, and primitive, it admits an eigendecomposition of the form:
\begin{align}
W = U\Lambda U^\top \triangleq 
\begin{bmatrix}
    \frac{1}{\sqrt{K}}\mathds{1}_{K},  \widehat{U}
\end{bmatrix}
\begin{bmatrix}
1 &0\\
0& \widehat{\Lambda}
\end{bmatrix}
\begin{bmatrix}
    \frac{1}{\sqrt{K}} \mathds{1}^\top_K\\
    \widehat{U}^\top
\end{bmatrix}
\in \mathbb{R}^{K\times K},
\end{align}
where $\widehat{\Lambda}$ is a diagonal matrix with entries $\lambda_2,\dots,\lambda_K$ and $\widehat{U}\in \mathbb{R}^{K \times (K-1)}$ is an orthogonal matrix.
It follows that the decompositions of its extended forms via the Kronecker product, namely   $\mW_x = W \otimes \mathrm{I}_{d_1}, \mW_y = W \otimes \mathrm{I}_{d_2}$,  are  given by
\begin{subequations} \label{main_decomp}
    \begin{align}
\mW_x &= U\Lambda U^\top \otimes \mathrm{I}_{d_1}\triangleq \mathcal{U}_x\Lambda_x\mathcal{U}^\top_x \triangleq
\begin{bmatrix}
    \frac{1}{\sqrt{K}}\mathds{1}_{K} \otimes \mathrm{I}_{d_1},  \widehat{\mU}_x
\end{bmatrix}
\begin{bmatrix}
\mathrm{I}_{d_1} &0\\
0& \widehat{\Lambda}_x
\end{bmatrix}
\begin{bmatrix}
    \frac{1}{\sqrt{K}} \mathds{1}^\top_K \otimes \mathrm{I}_{d_1}\\
    \widehat{\mU}^\top_x
\end{bmatrix},
\label{main:decompositionW_x}
\end{align}
\begin{align}
    \mW_y &= U\Lambda U^\top \otimes \mathrm{I}_{d_2}  \triangleq \mathcal{U}_y\Lambda_y\mathcal{U}^\top_y
    \triangleq
\begin{bmatrix}
    \frac{1}{\sqrt{K}}\mathds{1}_{K} \otimes \mathrm{I}_{d_2},  \widehat{\mU}_y
\end{bmatrix}
\begin{bmatrix}
\mathrm{I}_{d_2} &0\\
0& \widehat{\Lambda}_y
\end{bmatrix}
\begin{bmatrix}
    \frac{1}{\sqrt{K}} \mathds{1}^\top_K \otimes \mathrm{I}_{d_2}\\
    \widehat{\mU}^\top_y
\end{bmatrix},
\label{main:decompositionW_y}
\end{align}
\end{subequations}
where \(\mU_x \in \mathbb{R}^{Kd_1 \times Kd_1}\), \(\mU_y \in \mathbb{R}^{Kd_2 \times Kd_2}\)
are orthogonal matrices and
each block matrix is defined as
\begin{align}
\widehat{\Lambda}_x &\triangleq \widehat{\Lambda} \otimes \mathrm{I}_{d_1} \in \mathbb{R}^{(K-1)d_{1} \times (K-1) d_1},  &\quad
 \widehat{\Lambda}_y &\triangleq \widehat{\Lambda} \otimes \mathrm{I}_{d_2} \in \mathbb{R}^{(K-1)d_{2} \times (K-1) d_2}, \notag \\
 \widehat{\mU}_x &\triangleq \widehat{U}\otimes \mathrm{I}_{d_1} \in \mathbb{R}^{K d_1 \times (K-1)d_1},  &\quad \widehat{\mU}_y &\triangleq \widehat{U} \otimes \mathrm{I}_{d_2} \in \mathbb{R}^{K d_2 \times (K-1)d_2}.
\end{align}
Since $\mA_x, \mB^2_x, \mC_x$
are polynomial functions of $\mW_x$,
they share the same set of eigenvectors \cite{sayed2022inference,alghunaim2022unified}, and thus, we can define 
\begin{subequations}
\begin{align}
\mA_x &= \mU_x \Lambda_{a_x} \mU^\top_x \triangleq\begin{bmatrix}
\frac{1}{\sqrt{K}}
\mathds{1}_K \otimes \mathrm{I}_{d_1}, \widehat{\mU}_x
\end{bmatrix}
\begin{bmatrix}
    \mathrm{I}_{d_1} &0\\
    0& \widehat{\Lambda}_{a_x}
\end{bmatrix}
\begin{bmatrix}
\frac{1}{\sqrt{K}}
\mathds{1}^\top_K \otimes \mathrm{I}_{d_1} \\
\widehat{\mU}^\top_x 
\end{bmatrix} , \label{proof:Ax_eigen}
\\
\mC_x &= \mU_x \Lambda_{c_x} \mU^\top_x \triangleq\begin{bmatrix}
\frac{1}{\sqrt{K}}
\mathds{1}_K \otimes \mathrm{I}_{d_1}, \widehat{\mU}_x
\end{bmatrix}
\begin{bmatrix}
    \mathrm{I}_{d_1} &0\\
    0& \widehat{\Lambda}_{c_x}
\end{bmatrix}
\begin{bmatrix}
\frac{1}{\sqrt{K}}
\mathds{1}^\top_K \otimes \mathrm{I}_{d_1}\\
\widehat{\mU}^\top_x
\end{bmatrix},  \label{proof:Cx_eigen}
\\
\mB^2_x &= \mU_x \Lambda^2_{b_x}\mU^\top_x
\triangleq \begin{bmatrix}
\frac{1}{\sqrt{K}}
\mathds{1}_K \otimes \mathrm{I}_{d_1}, \widehat{\mU}_x
\end{bmatrix}
\begin{bmatrix}
    0 &0\\
    0& \widehat{\Lambda}^2_{b_x}
\end{bmatrix}
\begin{bmatrix}
\frac{1}{\sqrt{K}}
\mathds{1}^\top_K \otimes \mathrm{I}_{d_1}\\
\widehat{\mU}^\top_x
\end{bmatrix},  \label{proof:Bx_eigen}
\end{align}
\end{subequations}
where $\widehat{\Lambda}_{a_x}, \widehat{\Lambda}_{c_x}, \widehat{\Lambda}^2_{b_x}$ are diagonal matrices whose entries are polynomials of some eigenvalues of
 $W$. We define 
\begin{align}
\widehat{\Lambda}_{a_x} &= {\rm diag}\{ \lambda_{a_x, i}\}_{i=2}^{K} \otimes \mathrm{I}_{d_1}, \ \widehat{\Lambda}_{c_x} = {\rm diag}\{ \lambda_{c_x, i}\}_{i=2}^{K} \otimes \mathrm{I}_{d_1}, \\ \widehat{\Lambda}^2_{b_x} &= {\rm diag}\{ \lambda^2_{b_x, i}\}_{i=2}^{K} \otimes \mathrm{I}_{d_1}.
\end{align}
Similarly,
for $\mA_y, \mB^2_y, \mC_y$, we have
\begin{subequations}
\begin{align}
\mA_y &= \mU_y \Lambda_{a_y} \mU^\top_y\triangleq\begin{bmatrix}
\frac{1}{\sqrt{K}}
\mathds{1}_K \otimes \mathrm{I}_{d_2}, \widehat{\mU}_y
\end{bmatrix}
\begin{bmatrix}
    \mathrm{I}_{d_2} &0\\
    0& \widehat{\Lambda}_{a_y}
\end{bmatrix}
\begin{bmatrix}
\frac{1}{\sqrt{K}}
\mathds{1}^\top_K \otimes \mathrm{I}_{d_2} \\
\widehat{\mU}^\top_y 
\end{bmatrix} ,\\
\mC_y &= \mU_y \Lambda_{c_y} \mU^\top_y \triangleq\begin{bmatrix}
\frac{1}{\sqrt{K}}
\mathds{1}_K \otimes \mathrm{I}_{d_2}, \widehat{\mU}_y
\end{bmatrix}
\begin{bmatrix}
    \mathrm{I}_{d_2} &0\\
    0& \widehat{\Lambda}_{c_y}
\end{bmatrix}
\begin{bmatrix}
\frac{1}{\sqrt{K}}
\mathds{1}^\top_K \otimes \mathrm{I}_{d_2}\\
\widehat{\mU}^\top_y
\end{bmatrix} ,
\\
\mB^2_y &= \mU_y \Lambda^2_{b_y}\mU^\top_y\triangleq\begin{bmatrix}
\frac{1}{\sqrt{K}}
\mathds{1}_K \otimes \mathrm{I}_{d_2}, \widehat{\mU}_y
\end{bmatrix}
\begin{bmatrix}
   0 &0\\
    0& \widehat{\Lambda}^2_{b_y}
\end{bmatrix} 
\begin{bmatrix}
\frac{1}{\sqrt{K}}
\mathds{1}^\top_K \otimes \mathrm{I}_{d_2}\\
\widehat{\mU}^\top_y
\end{bmatrix},
\end{align}
\end{subequations}
where $\widehat{\Lambda}_{a_y}, \widehat{\Lambda}_{c_y}, \widehat{\Lambda}^2_{b_y}$
are
diagonal matrices with the eigenvalues of  $\mA_y, \mC_y, \mB^2_y$  defined as 
\begin{align}
\widehat{\Lambda}_{a_y} &= {\rm diag}\{ \lambda_{a_y, i}\}_{i=2}^{K} \otimes \mathrm{I}_{d_2}, \ \widehat{\Lambda}_{c_y} = {\rm diag}\{ \lambda_{c_y, i}\}_{i=2}^{K} \otimes \mathrm{I}_{d_2},  \ \widehat{\Lambda}^2_{b_y} = {\rm diag}\{ \lambda^2_{b_y, i}\}_{i=2}^{K} \otimes \mathrm{I}_{d_2} .
\end{align}
Based on these eigenvalue entries, we introduce the following matrices for all $i \in \{2, \dots, K\}$:
\begin{align}
G_{x,i} = 
\begin{bmatrix}
    \lambda_{a_x, i} \lambda_{c_x,i} - \lambda^2_{b_x, i}, & -\lambda_{b_x, i}  \\
    \lambda_{b_x, i},  &1 
\end{bmatrix} \in \mR^{2 \times 2} ,\\
G_{y,i} = \begin{bmatrix}
    \lambda_{a_y, i} \lambda_{c_y,i} - \lambda^2_{b_y, i}, & -\lambda_{b_y, i}  \\
    \lambda_{b_y, i},  &1 
\end{bmatrix} \in \mR^{2 \times 2} .
\end{align}
We recall a result from \cite{alghunaim2022unified} that ensures the stability of the transition matrix that will arise in our transformed recursion when selecting combination matrices according to Table \ref{tab:matrix_choices}.
\begin{Lemma}[\textbf{Property of transition matrices} \cite{alghunaim2022unified}]
\label{main:lemma:jordan}
Consider the transition matrices
\begin{align}
 \mP_x &\triangleq \begin{bmatrix}
\widehat{\Lambda}_{a_{x}}\widehat{\Lambda}_{c_{x}} -\widehat{\Lambda}^2_{b_x} & - \widehat{\Lambda}_{b_x} \\
\widehat{\Lambda}_{b_x}  & \mathrm{I}_{(K-1)d_1} 
\end{bmatrix}\in \mathbb{R}^{2(K-1)d_1\times 2(K-1)d_1}, \\
 \mP_y &\triangleq \begin{bmatrix}
\widehat{\Lambda}_{a_{y}}\widehat{\Lambda}_{c_{y}} -\widehat{\Lambda}^2_{b_y}& - \widehat{\Lambda}_{b_y} \\
\widehat{\Lambda}_{b_y}  & \mathrm{I}_{(K-1)d_2}
\end{bmatrix} \in \mathbb{R}^{2(K-1)d_2\times 2(K-1)d_2}.
\end{align}
There exist permutation matrices $\Pi_x, \Pi_y$
such that 
\begin{align}
    \Pi_x\mP_x \Pi^\top_x = {\rm blkdiag}\Big\{G_{x,i}\Big\}_{i=2}^{K} \otimes \mathrm{I}_{d_1}, \quad  \Pi_y\mP_y \Pi^\top_y = {\rm blkdiag}\Big\{G_{y,i}\Big\}_{i=2}^{K} \otimes \mathrm{I}_{d_2}.
\end{align}
Moreover, the transition matrices $\mP_x, \mP_y$ admit similarity transformations
of the form
\begin{align}
    \mP_x = \widehat{\mQ}_x \mT_x \widehat{\mQ}^{-1}_x, \quad \mP_y = \widehat{\mQ}_y \mT_y \widehat{\mQ}^{-1}_y.
\end{align} 
where $\widehat{\mQ}_x, \widehat{\mQ}_y, \mT_x, \mT_y$ are constructed based on two possible cases: 

$\bullet$ \textbf{Case 1}: When each matrix $G_{x,i} \in \mR^{2 \times 2}$ (and $G_{y,i}  \in \mR^{2 \times 2}$) has two distinct eigenvalues, we have
\begin{align}
    G_{x,i} = V_{x,i}\Lambda_{x,i}V^{-1}_{x,i}, \quad G_{y,i} = V_{y,i}\Lambda_{y,i}V^{-1}_{y,i},
\end{align}
where $\Lambda_{x,i}, \Lambda_{y,i}$ are eigenvalue matrices of $G_{x,i} ,G_{y,i}$, respectively.
Then, $\widehat{\mQ}_x, \widehat{\mQ}_y, \mT_x, \mT_y$ are constructed by
\begin{align}
\widehat{\mQ}_{x} &\triangleq \Pi^\top_{x}\Bigg({\rm blkdiag}\Big\{V_{x,i}\Big\}_{i=2}^{K} \otimes \mathrm{I}_{d_1}\Bigg), &\ \widehat{\mQ}_{y} &\triangleq \Pi^\top_{y}\Bigg( {\rm blkdiag}\Big\{V_{y,i}\Big\}_{i=2}^{K} \otimes \mathrm{I}_{d_2}\Bigg),
\\
\mT_{x}  &\triangleq  {\rm blkdiag}\Big\{\Lambda_{x,i} \Big\}_{i=2}^{K}\otimes \mathrm{I}_{d_1}, &\ \mT_{y}  &\triangleq  {\rm blkdiag}\Big\{\Lambda_{y,i} \Big\}_{i=2}^{K}\otimes \mathrm{I}_{d_2}.
\end{align}

$\bullet$ \textbf{Case 2}: When the
eigenvalues of each $G_{x,i} \in \mR^{2 \times 2}, G_{y,i} \in \mR^{2 \times 2}$ are identical, they admit the Jordan canonical form, say 
\begin{align}
G_{x,i} = V'_{x,i}\Lambda'_{x,i} V^{'-1}_{x,i} , \quad  \Lambda'_{x,i} = \begin{bmatrix}
\gamma_{x,i} & 1 \\
0 & \gamma_{x,i}
\end{bmatrix}, \quad G_{y,i} = V'_{y,i}\Lambda'_{y,i}V^{'-1}_{y,i}, \quad \Lambda'_{y,i} = \begin{bmatrix}
\gamma_{y,i}& 1 \\
0 &\gamma_{y,i}
\end{bmatrix}.
\end{align}
where $\gamma_{x,i}, \gamma_{y,i}$
are eigenvalues of $G_{x,i}, G_{y,i}$, respectively.
In this case, $\widehat{Q}_{x}, \widehat{Q}_{y},\mT_x, \mT_y$ are constructed by 
\begin{align}
\widehat{\mQ}_x &\triangleq \Pi^\top_x {\rm blkdiag}\Big\{
V'_{x,i}E_i
\Big\}_{i=2}^{K} \otimes \mathrm{I}_{d_1}, &\quad \widehat{\mQ}_y&\triangleq \Pi^\top_y{\rm blkdiag}\Big\{
V'_{y,i}E_i
\Big\}_{i=2}^{K} \otimes \mathrm{I}_{d_2} , \\
\mT_x &\triangleq {\rm blkdiag}\Big\{
E^{-1}_{i} \Lambda'_{x,i} E_{i}
\Big\}_{i=2}^{K} \otimes \mathrm{I}_{d_1}, &\quad \mT_y &\triangleq {\rm blkdiag}\Big\{
E^{-1}_{i} \Lambda'_{y,i} E_i
\Big\}_{i=2}^{K}  \otimes \mathrm{I}_{d_2},
\end{align}
where 
$E_{i}= {\rm diag}\{1, \epsilon_i\} $, $\epsilon_i >0$ is an arbitrary constant.
Furthermore, we have $\|\mT_x\|<1 , \|\mT_y\|<1$ by choosing matrices from Table \ref{tab:matrix_choices}.
\end{Lemma}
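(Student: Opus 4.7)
The plan is to establish the three assertions in sequence: the permutation-based block diagonalization, the per-block similarity transformation, and finally the spectral norm bound. The overall strategy mirrors the argument from \cite{alghunaim2022unified}, adapted to the notation here.

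First, I would observe that since $\mA_x, \mB_x^2, \mC_x$ are polynomial functions of $\mW_x$ by Assumption~\ref{main:assumptions:combinationmatrix}, they share the eigenbasis $\mU_x$ and their non-trivial blocks $\widehat{\Lambda}_{a_x}, \widehat{\Lambda}_{b_x}, \widehat{\Lambda}_{c_x}$ are all diagonal matrices indexed by $i\in\{2,\dots,K\}$ (each scaled by $\mathrm{I}_{d_1}$). Consequently, $\mP_x$ is a $2\times 2$ block matrix of diagonal blocks. Viewing its indices as a triple $(j,i,\ell)$ with $j\in\{1,2\}$ the block-position, $i\in\{2,\dots,K\}$ the eigenvalue index, and $\ell\in\{1,\dots,d_1\}$ the dimension index, I define $\Pi_x$ to be the permutation reordering $(j,i,\ell)\mapsto(i,j,\ell)$. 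Applying $\Pi_x$ symmetrically collects the four entries $\lambda_{a_x,i}\lambda_{c_x,i}-\lambda_{b_x,i}^2,\ -\lambda_{b_x,i},\ \lambda_{b_x,i},\ 1$ into one $2\times 2$ block $G_{x,i}$ (tensored with $\mathrm{I}_{d_1}$), yielding $\Pi_x\mP_x\Pi_x^\top = \mathrm{blkdiag}\{G_{x,i}\}_{i=2}^{K}\otimes\mathrm{I}_{d_1}$. The argument for $\mP_y$ is identical.

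Second, I would construct the similarity transformation blockwise. In Case~1 (distinct eigenvalues), each $G_{x,i}$ is diagonalizable as $V_{x,i}\Lambda_{x,i}V_{x,i}^{-1}$, so assembling the $V_{x,i}$ into a block-diagonal matrix and composing with $\Pi_x^\top$ yields $\widehat{\mQ}_x$ satisfying $\mP_x=\widehat{\mQ}_x\mT_x\widehat{\mQ}_x^{-1}$. In Case~2 (repeated eigenvalue $\gamma_{x,i}$), the Jordan form $\Lambda'_{x,i}=\bigl[\begin{smallmatrix}\gamma_{x,i}&1\\0&\gamma_{x,i}\end{smallmatrix}\bigr]$ is conjugated by $E_i=\mathrm{diag}\{1,\epsilon_i\}$ to get $E_i^{-1}\Lambda'_{x,i}E_i=\bigl[\begin{smallmatrix}\gamma_{x,i}&\epsilon_i\\0&\gamma_{x,i}\end{smallmatrix}\bigr]$; the scaling factor $\epsilon_i>0$ is a free parameter that shrinks the off-diagonal entry. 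Absorbing $E_i$ into $\widehat{\mQ}_x$ produces the stated formulas.

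Third, the norm bound $\|\mT_x\|<1,\|\mT_y\|<1$ is the main obstacle. It reduces to showing that for each row of Table~\ref{tab:matrix_choices}, the eigenvalues of every $G_{x,i}, G_{y,i}$ lie strictly inside the unit disk. I would substitute the explicit eigenvalue expressions into $G_{x,i}$: for instance, for ED we get $\lambda_{a_x,i}=\lambda_i,\ \lambda_{c_x,i}=1,\ \lambda_{b_x,i}^2=1-\lambda_i$, so $G_{x,i}=\bigl[\begin{smallmatrix}2\lambda_i-1&-\sqrt{1-\lambda_i}\\ \sqrt{1-\lambda_i}&1\end{smallmatrix}\bigr]$; its characteristic polynomial $\mu^2-2\lambda_i\mu+\lambda_i=0$ gives eigenvalues of modulus bounded by $\max(|\lambda_i|,1)$ with equality only when $\lambda_i=1$, which is excluded. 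Analogous verifications cover EXTRA, ATC-GT, semi-ATC-GT and non-ATC-GT. In Case~1, diagonalizability and strict spectral radius bound immediately give $\|\mT_x\|<1$. In Case~2, I would pick $\epsilon_i$ small enough so that $\|\bigl[\begin{smallmatrix}\gamma_{x,i}&\epsilon_i\\0&\gamma_{x,i}\end{smallmatrix}\bigr]\|<1$, using the fact that $|\gamma_{x,i}|<1$. The block-diagonal structure of $\mT_x$ then yields $\|\mT_x\|=\max_i\|E_i^{-1}\Lambda'_{x,i}E_i\|<1$. Since this spectral verification is already done in \cite{alghunaim2022unified} for all the matrix choices listed in Table~\ref{tab:matrix_choices}, I would invoke that result to conclude, rather than re-derive each case here.
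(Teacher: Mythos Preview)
The paper does not supply its own proof of this lemma: it is stated as a result recalled from \cite{alghunaim2022unified}, with no accompanying argument. Your proposal therefore goes beyond what the paper does by sketching the actual construction, and the sketch is correct and matches the approach of the cited reference: permute the $2\times 2$ block of diagonal matrices into a block-diagonal of $2\times 2$ matrices, diagonalize or Jordan-reduce each $G_{x,i}$, and use the $\epsilon_i$-scaling to control the off-diagonal entry in the defective case. One minor imprecision: in your ED verification the eigenvalue modulus is $\sqrt{|\lambda_i|}$ (from $\mu_1\mu_2=\lambda_i$ when the roots are complex conjugate), not ``bounded by $\max(|\lambda_i|,1)$'' in any sharp sense; and for $\lambda_i<0$ the roots are real and a separate check is needed. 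Since you defer the full case-by-case spectral verification to \cite{alghunaim2022unified} anyway, this does not affect the validity of your proposal.
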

Using the eigenmatrices $\mU_x, \mU_y$, we apply a linear transformation to \eqref{main:x_update_rule}–\eqref{main:zy_update_rule}, which leads to the transformed recursion
presented in the following lemma.
\begin{Lemma}[\textbf{Transformed error recursion}]
Under \autoref{main:assumptions:combinationmatrix},
 recursions \eqref{main:x_update_rule}--\eqref{main:zy_update_rule}  can be transformed into the following equivalent recursions:
\begin{subequations}
\begin{align}
\bx_{c,i+1} &= \bx_{c,i}-\frac{\mu_x}{K} \sum_{k=1}^{K} \bm^x_{k,i}\label{main:eq:x_finalrecursion},\\
\by_{c,i+1} &= \by_{c,i} \textcolor{blue}{ + } \frac{\mu_y}{K}\sum_{k=1}^{K}
\bm^y_{k,i}\label{main:eq:y_finalrecursion} ,\\
\mhE_{x,i+1} &= \mT_x \mhE_{x,i} 
-\frac{\mu_x}{\tau_x}\mQ^{-1}_x 
\begin{bmatrix}
0\\
\widehat{\Lambda}^{-1}_{b_x} \widehat{\Lambda}_{a_x} \widehat{\mU}^\top_x(\mM_{x,i} -\mM_{x,i+1})
\end{bmatrix} \label{main:eq:ex_finalrecursion} ,\\
\mhE_{y,i+1} &= \mT_y \mhE_{y,i} \textcolor{blue}{ + } \frac{\mu_y}{\tau_y}  \mQ^{-1}_y \begin{bmatrix}
0\\ 
\widehat{\Lambda}^{-1}_{b_y}\widehat{\Lambda}_{a_y} \widehat{\mU}^\top_y(\mM_{y,i} - \mM_{y,i+1})
\end{bmatrix}\label{main:eq:ey_finalrecursion},
\end{align}
\end{subequations}
where  $\tau_x$ and $\tau_y$ are arbitrary positive constants, and the coupled block error terms
$\mhE_{x,i}, \mhE_{y,i}$ are defined as 
\begin{align}
\mhE_{x,i} &\triangleq
   \frac{1}{\tau_x} \widehat{\mQ}^{-1}_x
\begin{bmatrix}
 \widehat{\mU}^\top_x \mX_i \\
 \widehat{\Lambda}_{b_x}^{-1} \widehat{\mU}^\top_x \mZ_{x,i}
\end{bmatrix} \in  \mathbb{R}^{2(K-1)d_1}, \\ 
\mhE_{y,i}  &\triangleq
   \frac{1}{\tau_y} \widehat{\mQ}^{-1}_y
\begin{bmatrix}
 \widehat{\mU}^\top_y \mY_i \\
 \widehat{\Lambda}_{b_y}^{-1} \widehat{\mU}^\top_y \mZ_{y,i}
\end{bmatrix}  \in  \mathbb{R}^{2(K-1)d_2}.
\label{main:tranform:def_couplederror}
\end{align}
\end{Lemma}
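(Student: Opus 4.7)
The plan is to project the recursions \eqref{main:x_update_rule}--\eqref{main:zy_update_rule} onto the consensus subspace and its orthogonal complement, using the Kronecker decompositions in \eqref{main:decompositionW_x}--\eqref{main:decompositionW_y} together with the spectral properties collected in \eqref{proof:Ax_eigen}--\eqref{proof:Bx_eigen}. Since $\mA_x,\mA_y,\mC_x,\mC_y$ share the eigenbasis of $\mW_x,\mW_y$ with eigenvalue $1$ on the consensus direction, while $\mB_x^2,\mB_y^2$ annihilate that direction, the dynamics split cleanly into a one-dimensional centroid component and a $2(K-1)d$-dimensional deviation component.

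\textbf{Step 1 (centroid recursion).} I would left-multiply \eqref{main:x_update_rule} by $\tfrac{1}{K}(\mathds{1}_K^\top\otimes \mathrm{I}_{d_1})$. Using $(\mathds{1}_K^\top\otimes \mathrm{I}_{d_1})\mA_x = (\mathds{1}_K^\top\otimes \mathrm{I}_{d_1})\mC_x = \mathds{1}_K^\top\otimes \mathrm{I}_{d_1}$ (double stochasticity) and $(\mathds{1}_K^\top\otimes \mathrm{I}_{d_1})\mB_x = 0$ (null-space condition in Assumption~\ref{main:assumptions:combinationmatrix}), I obtain $\bx_{c,i+1}=\bx_{c,i} - \tfrac{1}{K}(\mathds{1}_K^\top\otimes \mathrm{I}_{d_1})\mZ_{x,i}$. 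Substituting the definition \eqref{main:ZxExpression} of $\mZ_{x,i}$ and applying the same two identities to $\mB_x\mD_{x,i}$ and $\mB_x^2\mX_i$ kills those terms, leaving \eqref{main:eq:x_finalrecursion}. The same calculation applied to \eqref{main:y_update_rule}, together with the sign in \eqref{main:ZyExpression}, gives \eqref{main:eq:y_finalrecursion}.

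\textbf{Step 2 (deviation recursion).} For the orthogonal component, I would left-multiply \eqref{main:x_update_rule} by $\widehat{\mU}_x^\top$. Using the spectral decompositions \eqref{proof:Ax_eigen}--\eqref{proof:Bx_eigen} on the consensus block, I get $\widehat{\mU}_x^\top\mX_{i+1}=(\widehat{\Lambda}_{a_x}\widehat{\Lambda}_{c_x}-\widehat{\Lambda}_{b_x}^2)\widehat{\mU}_x^\top\mX_i-\widehat{\mU}_x^\top \mZ_{x,i}$. Defining the rescaled variable $\widehat{\mU}_x^\top \mZ_{x,i} \mapsto \widehat{\Lambda}_{b_x}\cdot(\widehat{\Lambda}_{b_x}^{-1}\widehat{\mU}_x^\top \mZ_{x,i})$ lets me write the pair $\bigl(\widehat{\mU}_x^\top\mX_i,\widehat{\Lambda}_{b_x}^{-1}\widehat{\mU}_x^\top\mZ_{x,i}\bigr)$ as a $2\times2$ block recursion whose transition matrix is exactly $\mP_x$ of Lemma~\ref{main:lemma:jordan}. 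Applying $\widehat{\mU}_x^\top$ to \eqref{main:zx_update_rule} and again using $\widehat{\mU}_x^\top \mB_x^2 = \widehat{\Lambda}_{b_x}^2 \widehat{\mU}_x^\top$ and $\widehat{\mU}_x^\top \mA_x = \widehat{\Lambda}_{a_x}\widehat{\mU}_x^\top$ produces the off-diagonal block $\widehat{\Lambda}_{b_x}$ and the forcing vector $\mu_x\widehat{\Lambda}_{b_x}^{-1}\widehat{\Lambda}_{a_x}\widehat{\mU}_x^\top(\mM_{x,i+1}-\mM_{x,i})$ in the second slot, with a zero in the first slot.

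\textbf{Step 3 (similarity transform).} Multiplying the resulting block recursion by $\tfrac{1}{\tau_x}\widehat{\mQ}_x^{-1}$ and using $\widehat{\mQ}_x^{-1}\mP_x\widehat{\mQ}_x=\mT_x$ from Lemma~\ref{main:lemma:jordan}, together with the definition of $\mhE_{x,i}$ in \eqref{main:tranform:def_couplederror}, immediately yields \eqref{main:eq:ex_finalrecursion}; flipping a sign gives the $(\mM_{x,i}-\mM_{x,i+1})$ form stated in the lemma. The $y$-recursion \eqref{main:eq:ey_finalrecursion} is identical except that $-\mZ_{y,i}$ in \eqref{main:y_update_rule} and the extra minus sign in \eqref{main:ZyExpression} together produce the overall $+$ sign on the forcing term. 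The only delicate point, and the one I would treat most carefully, is that the rescaling by $\widehat{\Lambda}_{b_x}^{-1}$ (and $\widehat{\Lambda}_{b_y}^{-1}$) must be justified on the range space of $\widehat{\mU}_x^\top$; this is where Assumption~\ref{main:assumptions:combinationmatrix}(ii) is essential, since it guarantees that $\widehat{\Lambda}_{b_x}$ is nonsingular on that subspace so the inverse in the definition of $\mhE_{x,i}$ is well-defined.
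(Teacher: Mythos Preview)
Your proposal is correct and follows essentially the same route as the paper's proof in Appendix~\ref{appendix:lineartransform}: the paper multiplies \eqref{main:x_update_rule}--\eqref{main:zy_update_rule} by the full eigenmatrix $\mU_x^\top$ (resp.\ $\mU_y^\top$) and reads off the consensus and deviation blocks simultaneously, whereas you apply $\tfrac{1}{K}(\mathds{1}_K^\top\otimes \mathrm{I}_{d_1})$ and $\widehat{\mU}_x^\top$ separately, but the resulting computations---rescaling the $\mZ$-component by $\widehat{\Lambda}_{b_x}^{-1}$, assembling the $2\times2$ block with transition matrix $\mP_x$, and applying the similarity transform from Lemma~\ref{main:lemma:jordan}---are identical. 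Your explicit remark about the invertibility of $\widehat{\Lambda}_{b_x}$ on $\mathrm{range}(\widehat{\mU}_x^\top)$ via Assumption~\ref{main:assumptions:combinationmatrix}(ii) is a point the paper leaves implicit.
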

\begin{proof}
See Appendix~\ref{appendix:lineartransform}.
\end{proof}
Using \eqref{main:tranform:def_couplederror},
we have 
\begin{align} \label{dev_avg_Q}
&\|\tau_x \widehat{\mQ}_x\mhE_{x,i}\|^2 +\|\tau_y \widehat{\mQ}_y\mhE_{y,i}\|^2 = \|\widehat{\mU}^\top_x\mX_i\|^2\notag\\
& \quad + \|\widehat{\mU}^\top_y\mY_i\|^2
+ \| \widehat{\Lambda}_{b_x}^{-1} \widehat{\mU}^\top_x \mZ_{x,i}\|^2
+\| \widehat{\Lambda}_{b_y}^{-1} \widehat{\mU}^\top_y \mZ_{y,i}\|^2.
\end{align}
Now note that 
\begin{align} \label{dev_avg}
&\|\widehat{\mU}^\top_x\mX_i\|^2 + \|\widehat{\mU}^\top_y\mY_i\|^2 \notag\\
&= 
\|\widehat{\mU}_x\widehat{\mU}^\top_x\mX_i\|^2 +\|\widehat{\mU}_y\widehat{\mU}^\top_y\mY_i\|^2 \notag \\
&\overset{\eqref{main:decompositionW_x}, \eqref{main:decompositionW_y}}{=} 
\Big\|\Big(I_{Kd_1} - \frac{1}{K} \mathds{1}_K\mathds{1}_K^\top\otimes I_{d_1}\Big)  \mX_{i}\Big\|^2 
+\Big\|\Big(I_{Kd_2} - \frac{1}{K} \mathds{1}_K\mathds{1}_K^\top\otimes I_{d_2}\Big)  \mY_{i}\Big\|^2 \notag\\
&= 
\Big\|\mX_{i} - \mathds{1}_K \otimes \bx_{c,i}\Big\|^2 +\Big\|\mY_{i} - \mathds{1}_K \otimes \by_{c,i}\Big\|^2.
\end{align}
The above term quantifies the consensus error between the local models and the network centroid. On the other hand, we can have
\begin{align}
&\|\widehat{\Lambda}_{b_x}^{-1} \widehat{\mU}^\top_x \mZ_{x,i}\|^2
+\| \widehat{\Lambda}_{b_y}^{-1} \widehat{\mU}^\top_y \mZ_{y,i}\|^2 \notag \\
&\le 
\|\widehat{\Lambda}_{b_x}^{-1} \|^2
\Big\|\Big(I_{Kd_1} - \frac{1}{K} \mathds{1}_K\mathds{1}^\top_K\otimes I_{d_1}\Big)\mZ_{x,i}\Big\|^2
+ 
\|\widehat{\Lambda}_{b_y}^{-1} \|^2
\Big\|\Big(I_{Kd_2} - \frac{1}{K} \mathds{1}_K\mathds{1}^\top_K\otimes I_{d_2}\Big)\mZ_{y,i}\Big\|^2 \notag \\
&\le 
\|\widehat{\Lambda}_{b_x}^{-1} \|^2
\Big\|\mZ_{x,i} - \Big(\frac{1}{K}\mathds{1}_K\mathds{1}^\top_K\otimes I_{d_1}\Big)\mZ_{x,i}\Big\|^2
 + 
\|\widehat{\Lambda}_{b_y}^{-1} \|^2
\Big\|\mZ_{y,i} - \Big(\frac{1}{K}\mathds{1}_K\mathds{1}^\top_K\otimes I_{d_2}\Big)\mZ_{y,i}\Big\|^2
\notag \\
&\overset{(a)}{\le}
\|\widehat{\Lambda}_{b_x}^{-1} \|^2
\Big\|\mZ_{x,i}- \Big(\frac{1}{K} \mathds{1}_K\mathds{1}^\top_K\otimes \mathrm{I}_{d_1}\Big) \mu_x \mM_{x,i}\Big\|^2
+ 
\|\widehat{\Lambda}_{b_y}^{-1} \|^2
\Big\|\mZ_{y,i} +  \Big(\frac{1}{K}\mathds{1}_K\mathds{1}^\top_K\otimes \mathrm{I}_{d_2} \Big)  \mu_y\mM_{y,i}\Big\|^2 \notag \\
&\le 
\|\widehat{\Lambda}_{b_x}^{-1} \|^2
\Big\|\mZ_{x,i} - \mu_x \mathds{1}_{K} \otimes  \bm^x_{c,i}\Big\|^2
+ 
\|\widehat{\Lambda}_{b_y}^{-1} \|^2
\Big\|\mZ_{y,i} - \mu_y(- \mathds{1}_{K} \otimes \bm^y_{c,i})\Big\|^2.
\end{align}
where 
\begin{align}
\bm^x_{c,i} \triangleq \frac{1}{K}\sum_{k=1}^{K} \bm^x_{k,i}, \quad \bm^y_{c,i} \triangleq \frac{1}{K}\sum_{k=1}^{K} \bm^y_{k,i},
\end{align}
and $(a)$ follows from the fact that ${\cal A}_x$ and ${\cal A}_y$ are doubly stochastic, as well as  $\Big(\mathds{1}_{K}\mathds{1}^\top_{K}\otimes I_{d_1}\Big) \mB_x = 0$ and $\Big(\mathds{1}_{K}\mathds{1}^\top_{K}\otimes I_{d_2}\Big) \mB_y = 0$ (see Assumption \ref{main:assumptions:combinationmatrix}).
The above term quantifies the  deviation of 
$\mZ_{x,i}, \mZ_{y,i}$ from  the {\em scaled}  centroid  of gradient 
estimators, $\mM_{x,i}$ and $-\mM_{y,i}$. 
Taken together, the error vectors $\mhE_{x,i}, \mhE_{y,i}$
account for the consensus errors of the local models and their scaled gradient estimators.
Unlike many existing decentralized minimax works 
\cite{xian2021faster,huang2023enhanced, cai2024accelerated, chen2023convergence, zhang2024jointly} that handle these error terms separately,
we exploit the coupling between them  and jointly analyze recursions \eqref{main:eq:ex_finalrecursion}--\eqref{main:eq:ey_finalrecursion},
which is essential to achieve a tighter performance bound.

\subsection{Main result}
\label{main:sec:main_result}
Based on the transformed model \eqref{main:eq:x_finalrecursion}---\eqref{main:eq:ey_finalrecursion}, we obtain the following main result:

\begin{Theorem}[\textbf{Main result}]
\label{main:theorem:main}
Under Assumptions \ref{main:assumption:costfunction}---\ref{main:assumptions:combinationmatrix},
choosing appropriate hyperparameters for \textbf{DAMA},
it holds that  
\begin{align}
&\frac{1}{T}
\sum_{i=0}^{T-1}
\Big(\mE\|\nabla_x J(\bx_{c,i}, \by_{c,i})\|^2
+\mE\|\nabla_y J(\bx_{c,i}, \by_{c,i})\|^2\Big) \notag \\
&\le 
\mathcal{O}
\Big(
\underbrace{\frac{\mE G_{p,0}}{T\mu_x}
+  \frac{\kappa^2\mE\Delta_{c,0}}{T\mu_y} +\frac{\kappa^2(a^\prime+c^\prime\mu^2_y+f^\prime)\mu^2_y\zeta^2_0}{ T}}_{\text{Initial discrepancy}}
+\underbrace{\frac{\kappa^2b^\prime\mu^2_y\sigma^2}{ T} 
 + \kappa^2(d^\prime+e^\prime) \sigma^2}_{\text{Noisy bound}}
\Big),
\end{align}
where  $\mE G_{p,0}, \mE\Delta_{c,0}, \zeta^2_0$ account for the error arising from the initial discrepancy and
\begin{subequations}
\begin{align}
\rho &\triangleq \max \Big\{\|\mT_x\|, \|\mT_y\|\Big\}, \
\beta^\prime \triangleq  p+\beta^2, \ 
\bar{\beta} \triangleq p+\beta - p \beta, \\
\lambda_a &\triangleq \max\Big\{\lambda_{a_x}, \lambda_{b_x}\Big\}, \quad \frac{1}{\underline{\lambda^2_b}} \triangleq \max \Big\{
\frac{1}{\underline{\lambda^2_{b_x}}},\frac{1}{\underline{\lambda^2_{b_y}}}
\Big\}, \\
a' &\triangleq \frac{L^2_f}{bK\bar{\beta}(1-\rho)\underline{\lambda^2_b}}, \quad 
b^\prime \triangleq \frac{L^2_f\lambda^2_a \beta^\prime}{bb_0K\bar{\beta}^2(1-\rho)^2\underline{\lambda^2_b}}, \\
c^\prime &\triangleq \frac{L^4_f\lambda^2_a\beta^\prime}{b^2K\bar{\beta}^2(1-\rho)^2\underline{\lambda^4_b}}, \\ 
d^\prime &\triangleq \frac{L^2_f\lambda^2_a}{bK\bar{\beta}(1-\rho)^2\underline{\lambda^2_b}}
\Big( \frac{p}{B} \mathbb{I}_{\text{online}}+\frac{\beta^2}{b}\Big) ,\\
e^\prime &\triangleq \frac{1}{b_0\bar{\beta}KT}
+\frac{\beta^2}{Kb\bar{\beta}}
+\frac{p}{KB\bar{\beta}} \mathbb{I}_{\text{online}}, \quad 
f^\prime \triangleq \frac{L^2_f}{bK\bar{\beta}\underline{\lambda^2_b}}.
\end{align}
\end{subequations}
Here, $\mathbb{I}_{\text{online}} \in \{0,1\}$
is an indicator of online streaming setup ($\mathbb{I}_{\text{online}} =1$) or offline finite-sum setup ($\mathbb{I}_{\text{online}} =0$).
\end{Theorem}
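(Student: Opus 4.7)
The plan is to build a Lyapunov function that combines four ingredients and then show it descends in expectation, so that telescoping yields the stated bound. Specifically, I would use
\[
\mathcal{L}_i \;\triangleq\; \bigl(P(\bx_{c,i}) - P^\star\bigr) \;+\; c_1\bigl(P(\bx_{c,i}) - J(\bx_{c,i},\by_{c,i})\bigr) \;+\; c_2\mE\|\mhE_{x,i}\|^2 + c_3\mE\|\mhE_{y,i}\|^2 \;+\; c_4\,\mathcal{V}_i ,
\]
where $\mathcal{V}_i \triangleq \mE\|\mM_{x,i}-\nabla_x\mJ(\mX_i,\mY_i)\|^2 + \mE\|\mM_{y,i}-\nabla_y\mJ(\mX_i,\mY_i)\|^2$ is the GRACE estimator variance and the positive constants $c_1,\dots,c_4$ will be tuned to match $\mu_x,\mu_y$. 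The Danskin-type consequence of Assumptions \ref{main:assumption:costfunction}--\ref{main:assumption:expectedsmooth} ($P(x)$ is $L$-smooth with $\nabla P(x) = \nabla_x J(x,y^o(x))$) together with the centroid recursions \eqref{main:eq:x_finalrecursion}--\eqref{main:eq:y_finalrecursion} yields a standard descent of $P(\bx_{c,i})$; the PL condition converts the second term into a contraction whose rate is governed by $\mu_y\nu$, and the algebraic identity \eqref{dev_avg_Q}--\eqref{dev_avg} ties $\|\mhE_{x,i}\|^2,\|\mhE_{y,i}\|^2$ back to consensus errors in the primal variables.

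The core of the argument is to produce four coupled one-step recursions. First, from smoothness of $P$ and \eqref{main:eq:x_finalrecursion} I would show $\mE P(\bx_{c,i+1}) \le \mE P(\bx_{c,i}) - \tfrac{\mu_x}{2}\mE\|\nabla_x J(\bx_{c,i},\by_{c,i})\|^2 + \mathcal{O}(\mu_x L^2) \cdot (\text{consensus}) + \mathcal{O}(\mu_x)\cdot(\text{estimator error}) + \mathcal{O}(\mu_x\kappa L)\cdot(\text{$y$-gap})$. Second, a parallel descent for the $y$-gap $P(\bx_{c,i})-J(\bx_{c,i},\by_{c,i})$ following \cite{nouiehed2019solving,yang2022faster} yields a contraction factor $(1-\mu_y\nu/2)$ plus the analogous error sources. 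Third, Lemma \ref{main:lemma:jordan} gives $\|\mT_x\|,\|\mT_y\|<1$, so \eqref{main:eq:ex_finalrecursion}--\eqref{main:eq:ey_finalrecursion} yield linear contractions
\[
\mE\|\mhE_{x,i+1}\|^2 \le \tfrac{1+\rho}{2}\mE\|\mhE_{x,i}\|^2 + \mathcal{O}\Bigl(\tfrac{\mu_x^2\lambda_a^2}{(1-\rho)\underline{\lambda_b^2}}\Bigr)\mE\|\mM_{x,i+1}-\mM_{x,i}\|^2 ,
\]
and similarly for $\mhE_y$, where the increment of $\mM$ is further expanded using $L_f$-smoothness plus noise. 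Fourth, the GRACE variance $\mathcal{V}_{i+1}$ admits a probabilistic recursion: with probability $p$ it resets to $\mathcal{O}(\sigma^2/B)\,\mathbb{I}_{\text{online}}$, and with probability $1-p$ it contracts by $(1-\beta)^2$ plus an $\mathcal{O}((1-\beta)^2 L_f^2/b)$ drift from $\|\mZ_{i}-\mZ_{i-1}\|^2$ and an $\mathcal{O}(\beta^2\sigma^2/b)$ noise term; taking the expectation and using $\bar{\beta}=p+\beta-p\beta$ as the effective decay rate is what produces the constants $a',\dots,f'$ in the statement.

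Combining the four descents with weights $c_1\asymp\kappa$, $c_2,c_3$ chosen so the $\mE\|\mhE\|^2$ drift terms from the first two are absorbed into the $(1-\rho)/2$ contractions of the third, and $c_4$ chosen so the estimator-error drift is absorbed into the GRACE contraction, gives a clean one-step inequality of the form
\[
\mE\mathcal{L}_{i+1} \le \mE\mathcal{L}_i - \tfrac{\mu_x}{4}\mE\|\nabla_x J(\bx_{c,i},\by_{c,i})\|^2 - \tfrac{\mu_y\nu}{4}\cdot c_1\,\mE\bigl(P(\bx_{c,i})-J(\bx_{c,i},\by_{c,i})\bigr) + \mathcal{R}_i ,
\]
with a residual $\mathcal{R}_i$ collecting only $\sigma^2$-terms. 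Converting the second gap into $\mE\|\nabla_y J\|^2$ via PL and telescoping $i=0,\dots,T-1$ produces the $\mE G_{p,0}/(T\mu_x)$ and $\kappa^2 \mE\Delta_{c,0}/(T\mu_y)$ transient terms and the noise term $\mathcal{O}(\kappa^2(d'+e')\sigma^2)$; the $\mu_y^2\sigma^2 b'/T$ and $\mu_y^2\zeta_0^2(a'+c'\mu_y^2+f')/T$ pieces come from telescoping the consensus/estimator Lyapunov contributions and their initialization.

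The hardest step is obtaining tight constants $a',\dots,f'$ with the correct joint dependence on $(1-\rho)$, $\underline{\lambda_b^2}$, $\lambda_a$, and $\bar{\beta}$. Two subtleties drive the difficulty: (i) the $\mM_{x,i+1}-\mM_{x,i}$ term in \eqref{main:eq:ex_finalrecursion} couples the consensus recursion to \emph{both} the gradient-estimator variance and the one-step primal increment, so a naive bound double-counts either the noise or the $1/(1-\rho)$ factor; avoiding this requires the alternative auxiliary construction \eqref{main:ZxExpression}--\eqref{main:ZyExpression} specifically so that $\mZ_i$ contains $\mM_i$ rather than the true gradient as in \cite{alghunaim2022unified}, and (ii) the Bernoulli switching in GRACE means $\mathcal{V}_i$ does not contract deterministically, so one must carry conditional expectations carefully and use $\bar{\beta}=p+\beta-p\beta$ as an \emph{effective} rate rather than $\min(p,\beta)$ to avoid losing a factor of $1/(1-\rho)$ or $1/\underline{\lambda_b^2}$. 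Once these two couplings are handled, the remaining step-size conditions (two-time-scale $\mu_x/\mu_y=\mathcal{O}(1/\kappa^2)$, $\mu_y = \mathcal{O}((1-\rho)\underline{\lambda_b}/(L_f\lambda_a))$, $\beta = \Theta(\mu_y L_f)$, etc.) drop out of routine matching of coefficients, and the stated performance bound follows.
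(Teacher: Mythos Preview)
The paper itself does not contain a proof of Theorem~\ref{main:theorem:main}: the proof block reads simply ``See Part II of this work \cite{cai2025dama2},'' so a line-by-line comparison is not possible from the supplied source. What Part~I does provide is the analytical scaffolding---the auxiliary variables \eqref{main:ZxExpression}--\eqref{main:ZyExpression}, the transformed recursions \eqref{main:eq:x_finalrecursion}--\eqref{main:eq:ey_finalrecursion}, and Lemma~\ref{main:lemma:jordan} guaranteeing $\|\mT_x\|,\|\mT_y\|<1$---and your Lyapunov proposal is precisely the route that this scaffolding is designed to support.

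Your four-term Lyapunov with primal gap, $y$-suboptimality, coupled consensus errors $\|\mhE_{x,i}\|^2+\|\mhE_{y,i}\|^2$, and GRACE variance $\mathcal{V}_i$ is the natural construction, and the one-step recursions you sketch are the right ones. You have correctly isolated the two genuinely delicate points: (i) the $\mM_{i+1}-\mM_i$ coupling in \eqref{main:eq:ex_finalrecursion}--\eqref{main:eq:ey_finalrecursion}, which the paper explicitly flags as the reason for building $\mZ_{x,i},\mZ_{y,i}$ around $\mM_i$ rather than the true gradient (the paragraph after \eqref{main:zy_update_rule}), and (ii) the probabilistic GRACE contraction with effective rate $\bar\beta=p+\beta-p\beta$. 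The identification $\mE G_{p,0}\leftrightarrow$ primal gap, $\mE\Delta_{c,0}\leftrightarrow$ $y$-suboptimality, $\zeta_0^2\leftrightarrow$ initial consensus/estimator error is also consistent with how the theorem groups the ``Initial discrepancy'' terms.

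One caution on bookkeeping: in your consensus recursion you bound $\mE\|\mM_{x,i+1}-\mM_{x,i}\|^2$ by smoothness plus noise, but under GRACE with $\bpi_{k,i+1}=1$ this increment contains a large-batch reset and is \emph{not} $\mathcal{O}(L_f^2\|\bz_{i+1}-\bz_i\|^2)$; you must condition on $\bpi_{k,i+1}$ and handle the $p$-branch separately, which is where the $\beta'=p+\beta^2$ and the $p/B$ factor inside $d'$ originate. If you treat that step carefully and carry the conditional expectations through, the coefficient matching you describe should recover the stated $a',\dots,f'$.
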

\begin{proof}
See Part II of this work \cite{cai2025dama2}.
\end{proof}
Theorem \ref{main:theorem:main} establishes a performance bound for the unified strategy \textbf{DAMA}. By tuning the hyperparameters
$\{b, B, p, \beta_x, \beta_y\}$, one can recover results for existing accelerated gradient estimators, such as STORM, PAGE, and loopless SARAH. Likewise, by choosing the combination matrices
$\{\mA_x, \mA_y, \mB_x, \mB_y, \mC_x, \mC_y\}$
in specific ways, one can recover results for minimax decentralized strategies of the type ED, EXTRA, and several variants of GT. Note that the choice of combination matrices directly affects the quantities
$\{\lambda^2_a, \underline{\lambda^2_b}\}$, which in turn influence the algorithm's performance over sparsely connected networks.

As a result of Theorem \ref{main:theorem:main},
we obtain the results for 
the specific strategies of \textbf{DAMA}, summarized in Tables \ref{tab:algo-comparison:complexity} and  \ref{tab:algo-comparison}. Several of these results are new.

\begin{table*}[!htbp]
\centering
\begin{threeparttable}
\caption{Performance bounds for different instances of \textbf{DAMA} for solving decentralized nonconvex-PL minimax optimization problems.
The technical details can be found in the accompanying Part II of this work \cite{cai2025dama2}.}
\footnotesize
\label{tab:algo-comparison}
\rowcolors{3}{tableShade}{white}
\setlength{\tabcolsep}{4pt}
\renewcommand{\arraystretch}{1.25}
\begin{tabularx}{\linewidth}{l c c B }
\toprule
\makecell{\textbf{Algorithm}} & 
\makecell{\textbf{PS}$^\circ$} & 
\makecell{\textbf{Studied before?}}& 
\makecell{\textbf{Performance Bound}$^{\ddagger}$}  \\
\midrule
STORM+ED (\cite[Corollary 1]{cai2025dama2}) & On & No & $
 \mathcal{O}
\Big( 
\frac{\kappa^2
\mE G_{p,0} 
}{(TK)^{2/3}} +  \frac{\kappa^2\mE\Delta_{c,0}}{(KT)^{2/3}} + \frac{\kappa^2\sigma^2}{(KT)^{2/3}} + \frac{\kappa^2 \zeta^2_0}{T(1-\lambda)^2}
+ \frac{\kappa^2\lambda^2K^{2/3}\sigma^2}{T^{4/3}(1-\lambda)^3}+ \frac{\kappa^2\lambda^2K\sigma^2}{T^2(1-\lambda)^3}
\Big)$  \\
STORM+EXTRA (\cite[Corollary 2]{cai2025dama2})       & On  & No & $\mathcal{O}
\Big( 
\frac{\kappa^2
\mE G_{p,0} 
}{(TK)^{2/3}} +  \frac{\kappa^2\mE\Delta_{c,0}}{(KT)^{2/3}} + \frac{\kappa^2\sigma^2}{(KT)^{2/3}} + \frac{\kappa^2 \zeta^2_0}{T(1-\lambda)^2}
+ \frac{\kappa^2K^{2/3}\sigma^2}{T^{4/3}(1-\lambda)^3}+ \frac{\kappa^2K\sigma^2}{T^2(1-\lambda)^3}
\Big)$   \\
STORM+ATC-GT (\cite[Corollary 3]{cai2025dama2})& On & Yes & $ \mathcal{O}
\Big( 
\frac{\kappa^2
\mE G_{p,0} 
}{(TK)^{2/3}} +  \frac{\kappa^2\mE\Delta_{c,0}}{(KT)^{2/3}} + \frac{\kappa^2\sigma^2}{(KT)^{2/3}} + \frac{\kappa^2 \zeta^2_0}{T(1-\lambda)^3}
+ \frac{\kappa^2\lambda^4K^{2/3}\sigma^2}{T^{4/3}(1-\lambda)^4}+ \frac{\kappa^2\lambda^4K\sigma^2}{T^2(1-\lambda)^4}
\Big)$ \\
PAGE+ED (\cite[Corollary 7]{cai2025dama2}) &On& No&
$\mathcal{O}
\Big( 
\frac{\kappa^2 \mE G_{p,0}}{T(1-\lambda)^{1.5}}+ \frac{\kappa^2\mE\Delta_{c,0}}{T(1-\lambda)^{1.5}}
+\frac{\kappa^2\sigma^2}{T(1-\lambda)^{1.5}}
+\frac{\kappa^2\zeta^2_0(1-\lambda)}{T}+\frac{\kappa^2\sigma^2}{T}  + \frac{\kappa^2\lambda^2K\sigma^2}{(1-\lambda)^{9/4}T^{3/2}}
\Big)$
\\
PAGE+EXTRA (\cite[Corollary 8]{cai2025dama2})&On&No& $\mathcal{O}
\Big( 
\frac{\kappa^2 \mE G_{p,0}}{T(1-\lambda)^{1.5}}+ \frac{\kappa^2\mE\Delta_{c,0}}{T(1-\lambda)^{1.5}}
+\frac{\kappa^2\sigma^2}{T(1-\lambda)^{1.5}}
+\frac{\kappa^2\zeta^2_0(1-\lambda)}{T}+\frac{\kappa^2\sigma^2}{T}  + \frac{\kappa^2K\sigma^2}{(1-\lambda)^{9/4}T^{3/2}}
\Big) $
\\ 
PAGE+ATC-GT (\cite[Corollary 9]{cai2025dama2})&On& Yes &  $\mathcal{O}
\Big( 
\frac{\kappa^2 \mE G_{p,0}}{T(1-\lambda)^{2}}+ \frac{\kappa^2\mE\Delta_{c,0}}{T(1-\lambda)^{2}}
+\frac{\kappa^2\sigma^2}{T(1-\lambda)^{2}}
+\frac{\kappa^2\zeta^2_0(1-\lambda)}{T}+\frac{\kappa^2\sigma^2}{T}  + \frac{\kappa^2\lambda^4K\sigma^2}{(1-\lambda)^{3}T^{3/2}}
\Big) $ 
\\ 
PAGE+ED (\cite[Corollary 4]{cai2025dama2})& Off& No &$ \mathcal{O}
\Big( 
\frac{\kappa^2 \mE G_{p,0}}{T(1-\lambda)^{1.5}}+ \frac{\kappa^2\mE\Delta_{c,0}}{T(1-\lambda)^{1.5}}
+\frac{\kappa^2\zeta^2_0(1-\lambda)}{T}+\frac{\kappa^2\sigma^2}{T}
+ \frac{\kappa^2\lambda^2\sqrt{K}\sigma^2}{\sqrt{N}T}
\Big) $\\
PAGE+EXTRA (\cite[Corollary 5]{cai2025dama2})&Off& No & $ \mathcal{O}
\Big( 
\frac{\kappa^2 \mE G_{p,0}}{T(1-\lambda)^{1.5}}+ \frac{\kappa^2\mE\Delta_{c,0}}{T(1-\lambda)^{1.5}}
+\frac{\kappa^2\zeta^2_0(1-\lambda)}{T}+\frac{\kappa^2\sigma^2}{T}
+ \frac{\kappa^2\sqrt{K}\sigma^2}{\sqrt{N}T}
\Big)$\\
PAGE+ATC-GT (\cite[Corollary 6]{cai2025dama2}) &Off & Yes &
$\mathcal{O}
\Big( 
\frac{\kappa^2 \mE G_{p,0}}{T(1-\lambda)^{2}}+ \frac{\kappa^2\mE\Delta_{c,0}}{T(1-\lambda)^{2}}
+\frac{\kappa^2\zeta^2_0(1-\lambda)}{T}+\frac{\kappa^2\sigma^2}{T}
+ \frac{\kappa^2\lambda^4\sqrt{K}\sigma^2}{\sqrt{N}T}
\Big)$
\\
L-SARAH +ED (\cite[Corollary 10]{cai2025dama2})& Off &No& $\mathcal{O}
\Big(
\frac{\kappa^2\sqrt{N}\mE G_{p,0}}{KT}
+\frac{\kappa^2 \zeta^2_0}{T(1-\lambda)^2}
+ \frac{\kappa^2\lambda^2K\sigma^2}{\sqrt{N}T(1-\lambda)^3}
+ \frac{\kappa^2\lambda^2K^2\zeta^2_0}{NT(1-\lambda)^4} 
+\frac{\kappa^2 \sqrt{N}\mE \Delta_{c,0}}{KT}
+ \frac{\kappa^2\sqrt{N}\sigma^2}{KT}
+ \frac{\kappa^2\zeta^2_0}{T(1-\lambda)}
\Big)$ \\
L-SARAH +EXTRA (\cite[Corollary 11]{cai2025dama2})& Off &No& $\mathcal{O}
\Big(
\frac{\kappa^2\sqrt{N}\mE G_{p,0}}{KT}
+\frac{\kappa^2 \zeta^2_0}{T(1-\lambda)^2}
+ \frac{\kappa^2 K\sigma^2}{\sqrt{N}T(1-\lambda)^3}
+ \frac{\kappa^2K^2\zeta^2_0}{NT(1-\lambda)^4} 
+\frac{\kappa^2 \sqrt{N}\mE \Delta_{c,0}}{KT}
+ \frac{\kappa^2\sqrt{N}\sigma^2}{KT}
+ \frac{\kappa^2\zeta^2_0}{T(1-\lambda)}
\Big)$\\
L-SARAH +ATC-GT (\cite[Corollary 12]{cai2025dama2})& Off &No& $\Big(
\frac{\kappa^2\sqrt{N}\mE G_{p,0}}{KT}
+\frac{\kappa^2  \zeta^2_0}{T(1-\lambda)^3}
+ \frac{\kappa^2 \lambda^4 K \sigma^2}{\sqrt{N}T(1-\lambda)^4}
+ \frac{\kappa^2\lambda^4K^2\zeta^2_0}{NT(1-\lambda)^6}  
+\frac{\kappa^2 \sqrt{N}\mE \Delta_{c,0}}{KT}
+ \frac{\kappa^2\sqrt{N}\sigma^2}{KT}
+ \frac{\kappa^2\zeta^2_0}{T(1-\lambda)^2}
\Big)$
\\
\bottomrule
\end{tabularx}
\begin{tablenotes}[flushleft]
\footnotesize
\item 
Notes: $^\circ$PS = Problem setups (On = Online stochastic, Off = Offline finite-sum).  
$^\ddagger$ The last column reports the performance bounds of each algorithm instance, with some minor terms absorbed into the dominant one.
L-SARAH = Loopless SARAH. 
$T$: communication round. $K$: the number of agents. $N$: the size of the local training sample in the offline scenario. $\sigma^2$: the bounded variance parameter. $\kappa$ is the condition number $\kappa \triangleq L_f/\nu$. $1-\lambda$ denotes the network spectral gap; the sparser the network, the smaller its value. $G_{p,0}, \Delta_{c,0}, \zeta^2_0$ are quantities associated with initial discrepancy (see \cite{cai2025dama2}).
The technical details of the above results can be found in Part II of this work \cite{cai2025dama2}.
\end{tablenotes}
\end{threeparttable}
\end{table*}

We highlight some important findings below,  which appear in the corollaries established in the Part II of this work \cite{cai2025dama2}:

1)
\cite[Corollary 1]{cai2025dama2} establishes the best-known sample complexity and the first meaningful linear speedup result for the decentralized minimax {\em accelerated  momentum} algorithm, shown in Tables \ref{tab:algo-comparison} and \ref{tab:algo-comparison:transient}, respectively.  In comparison, many existing works exhibit an $\mathcal{O}(1-\lambda)$ dependency in the dominant term of their sample complexity \cite{xian2021faster,chen2023convergence, zhang2024jointly, cai2025communication,ghiasvand2025robust, gao2022decentralized,mancino2023variance}, which implies that linear speedup might not be achieved for sparse networks.

2) The case STORM+ATC-GT has been studied in \cite{xian2021faster, huang2023near}. 
\cite[Corollary 3]{cai2025dama2} shows that
our sample complexity improves upon theirs by eliminating the 
$\mathcal{O}(1-\lambda)$ dependence in the dominant term, thereby demonstrating a meaningful linear speedup with respect to the number of agents 
$K$. Moreover, we observe that the transient time of ATC-GT–based algorithms is surpassed by ED-based algorithms. 
From Corollaries \cite[Corollary 2]{cai2025dama2} and \cite[Corollary 3]{cai2025dama2}, 
we observe that the EXTRA-based algorithm also outperforms ATC-GT.

3) Compared to the GT-based variance reduction algorithms \cite{gao2022decentralized, zhang2024jointly}, \cite[Corollary 4]{cai2025dama2} and \cite[Corollary 5]{cai2025dama2} show that
our performance bound has a better dependency on the network spectral gap $\mathcal{O}(1-\lambda)$. Moreover, PAGE-based algorithms outperform STORM-based methods in terms of performance bound under a large $T$, but require using a large batch size at each iteration.

4) 
\cite[Corollaries 1, 2, 4, 5, 7, 8, 10, 11]{cai2025dama2} indicate that the widely used GT-based method 
cannot guarantee superior results.

5) In addition, 
compared to the online results shown in \cite[Corollaries 1-3]{cai2025dama2} and \cite[Corollaries 7-9]{cai2025dama2}, 
the finite-sum result of the PAGE+ED variant shown in  \cite[Corollary 4]{cai2025dama2} can achieve better results when the local sample size $N$ is smaller than a certain order of $\mathcal{O}(\varepsilon^{-1})$.

6) 
Although the communication complexity of Loopless SARAH+ED is higher than that of PAGE+ED, it is more memory efficient since it avoids overusing large batch samples.
From \cite[Corollary 4]{cai2025dama2} and \cite[Corollary 10]{cai2025dama2}, we observe that Loopless SARAH+ED improves the sample complexity of PAGE+ED by a factor of
$\mathcal{O}(\sqrt{K}/(1-\lambda)^{1.5})$. The extra communication rounds introduced in Loopless SARAH+ED offer an additional advantage by allowing agents more time to reduce the consensus error, thereby improving sample complexity.
Most importantly, Loopless SARAH+ED improves upon the previously best-known sample complexity $\mathcal{O}\Big(
\frac{\kappa^2\sqrt{N}\varepsilon^{-2}}{\sqrt{K}}
\Big)$  \cite{chen2024efficient} by a factor of $\mathcal{O}(\sqrt{K})$, thereby establishing a new state-of-the-art sample complexity in an finite-sum setting.
In contrast to \cite{chen2024efficient}, this is obtained without resorting to multi-step fast mixing or prior knowledge of the combination matrix.

We report the transient time of the STORM-based algorithm below. We find that the STORM+ED algorithm achieves faster transient time for linear speedup than other methods.
\begin{table*}[!htbp]
\centering
\begin{threeparttable}
\caption{Transient time of the STORM-based algorithm in achieving linear speedups.}
\footnotesize
\label{tab:algo-comparison:transient}
\rowcolors{3}{tableShade}{white}
\setlength{\tabcolsep}{1pt}
\renewcommand{\arraystretch}{1.55}
\begin{tabularx}{\linewidth}{l  c  L}
\toprule
\makecell{\textbf{Algorithm}} & 
\makecell{\textbf{PS}$^\circ$} &
\makecell{\textbf{Transient time}$^{\star}$}  \\
\midrule
STORM+ED (\cite[Corollary 1]{cai2025dama2}) & On  & $\max \Big\{\mathcal{O}\Big(\frac{\lambda^{6/5}K^{7/5}}{(1-\lambda)^{2.4}}\Big), \mathcal{O}\Big(\frac{\lambda^{3/2}K^{5/4}}{(1-\lambda)^{9/4}}\Big), \mathcal{O}\Big(\frac{\lambda^{3}K^2}{(1-\lambda)^{4.5}}\Big), \mathcal{O}\Big( \frac{K^2}{(1-\lambda)^6}\Big)\Big\}$   \\
STORM+EXTRA    (\cite[Corollary 2]{cai2025dama2})     & On  &  $
  \max \Big\{\mathcal{O}\Big(\frac{K^{7/5}}{(1-\lambda)^{2.4}}\Big), \mathcal{O}\Big(\frac{K^{5/4}}{(1-\lambda)^{9/4}}\Big), \mathcal{O}\Big(\frac{K^2}{(1-\lambda)^{4.5}}\Big), \mathcal{O}\Big( \frac{K^2}{(1-\lambda)^6}\Big)\Big\}$ \\
STORM+ATC-GT (\cite[Corollary 3]{cai2025dama2})& On  & $ \max \Big\{\mathcal{O}\Big(\frac{\lambda^{12/5}K^{7/5}}{(1-\lambda)^{3.6}}\Big), \mathcal{O}\Big(\frac{\lambda^3K^{5/4}}{(1-\lambda)^{3}}\Big), \mathcal{O}\Big(\frac{\lambda^6K^2}{(1-\lambda)^{6}}\Big), \mathcal{O}\Big( \frac{K^2}{(1-\lambda)^9}\Big)\Big\}$ 
\\
\bottomrule
\end{tabularx}
\begin{tablenotes}[flushleft]
\footnotesize
\item Notes: $^\star$
Here, we report the {\em meaningful} transient time, namely, the number of communication rounds required to achieve linear speedup in the number of agents $K$ for the performance bound that is {\em independent} of $\mathcal{O}(1-\lambda)$.  $^\circ$ PS= Problem setups (On = Online stochastic). The  above technical details can be found in Part II of this work \cite{cai2025dama2}.
\end{tablenotes}
\end{threeparttable}
\end{table*}

\section{Simulations}
\label{sec:simulations}

In this section, we illustrate different instances of \textbf{DAMA} through both numerical examples and practical applications. The first experiment uses synthetic data, while the second experiment addresses deep learning tasks. For each case, we evaluate performance under various network topologies, ranging from well-connected to sparsely connected graphs.

\subsection{Numerical example}

The cost function of this example is defined as 
\begin{align}
&\min_{x\in \mR^{d_1}} \max_{y \in \mR^{d_2}} J(x,y) = \frac{1}{K}
\sum_{k=1}^{K}
J_{k}(x,y), \quad \text{where } \notag\\
 J_{k}(x,y) &=
\mE_{\boldsymbol{a}^\top_{k}, \boldsymbol{e}_{k}}
\Big[\frac{1}{2}(\boldsymbol{a}^\top_{k} x)^2  +y^\top(B_kx + \boldsymbol{e}_k)
- \frac{\nu}{2}
\|y\|^2\Big],
\end{align}
where $\boldsymbol{a}_{k} \in \mR^{d_1}$
are the local i.i.d. random sample, $\boldsymbol{e}_{k} \in \mR^{d_2}$
is the i.i.d. random error vector, $B_k \in \mR^{d_2 \times d_1}$ is a deterministic matrix that varies across different agents, and $\nu$ is the strong concavity factor.

We consider sparsely connected network topologies, such as line and ring graphs, as well as a well-connected graph topology {\color{black}following a  Metropolis rule}. Each network consists of $K=20$ agents. The problem setups are given as follows:
we set $\nu=10$; we generate the random coupling matrix
$B_{k}$
with i.i.d. entries $ [B]_{ij} \sim \mathcal{N}(0, 0.001)$.
The entries of the local samples $\boldsymbol{a}_{k}$ 
are independently sampled from 
the normal distribution $\mathcal{N}(1.0+0.01\times k, 10.0)$, where the mean value is perturbed according to agent index to ensure data heterogeneity.
The entries of the error vector $\boldsymbol{e}_{k}$
are independently sampled from 
the distribution $\mathcal{N}(0.0, 10.0)$.
Each agent owns a randomly generated dataset with size $N_k = 2000$.
The dimensions of the vectors $x$ and $y$
is set to $d_1 = 100$ and $d_2 =100$.

\begin{figure*}[!htbp]
    \centering
    \begin{subfigure}[b]{0.38\textwidth}
\includegraphics[width=\linewidth]{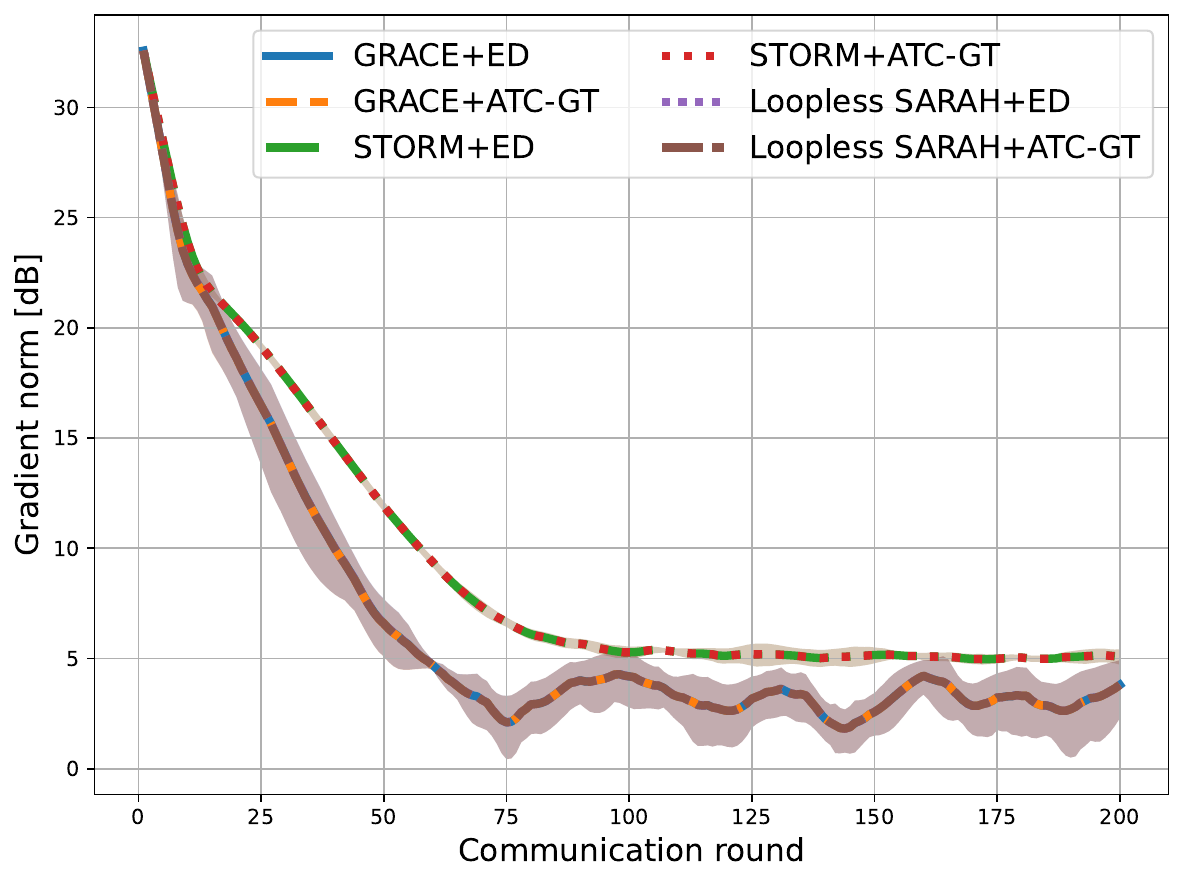}
        \caption{Network under Metropolis rule  (well-connected)}
        \label{main:fig:metropolis}
    \end{subfigure}
    \begin{subfigure}[b]{0.38\textwidth}
        \includegraphics[width=\linewidth]{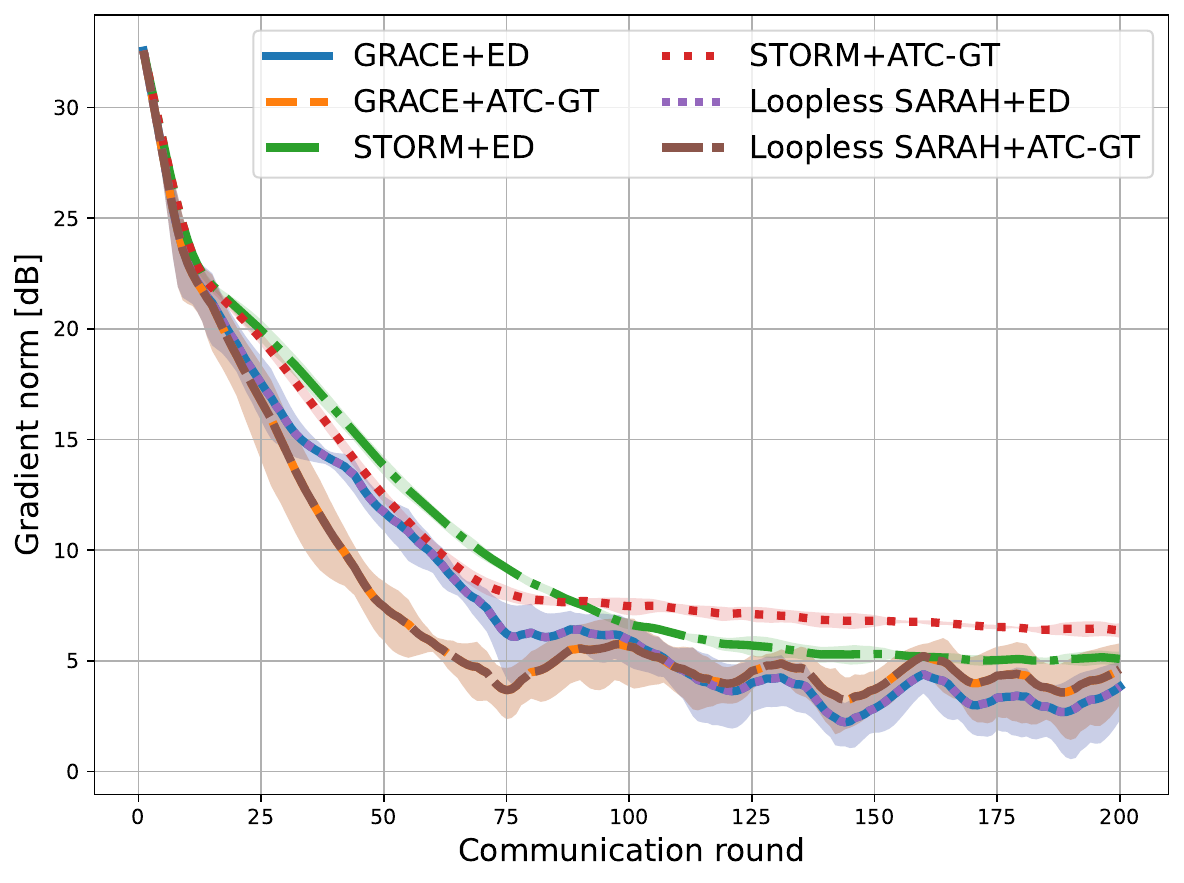}
        \caption{Ring network (sparsely-connected)}
        \label{main:fig:ring}
    \end{subfigure}
    \begin{subfigure}[b]{0.38\textwidth}
\includegraphics[width=\linewidth]{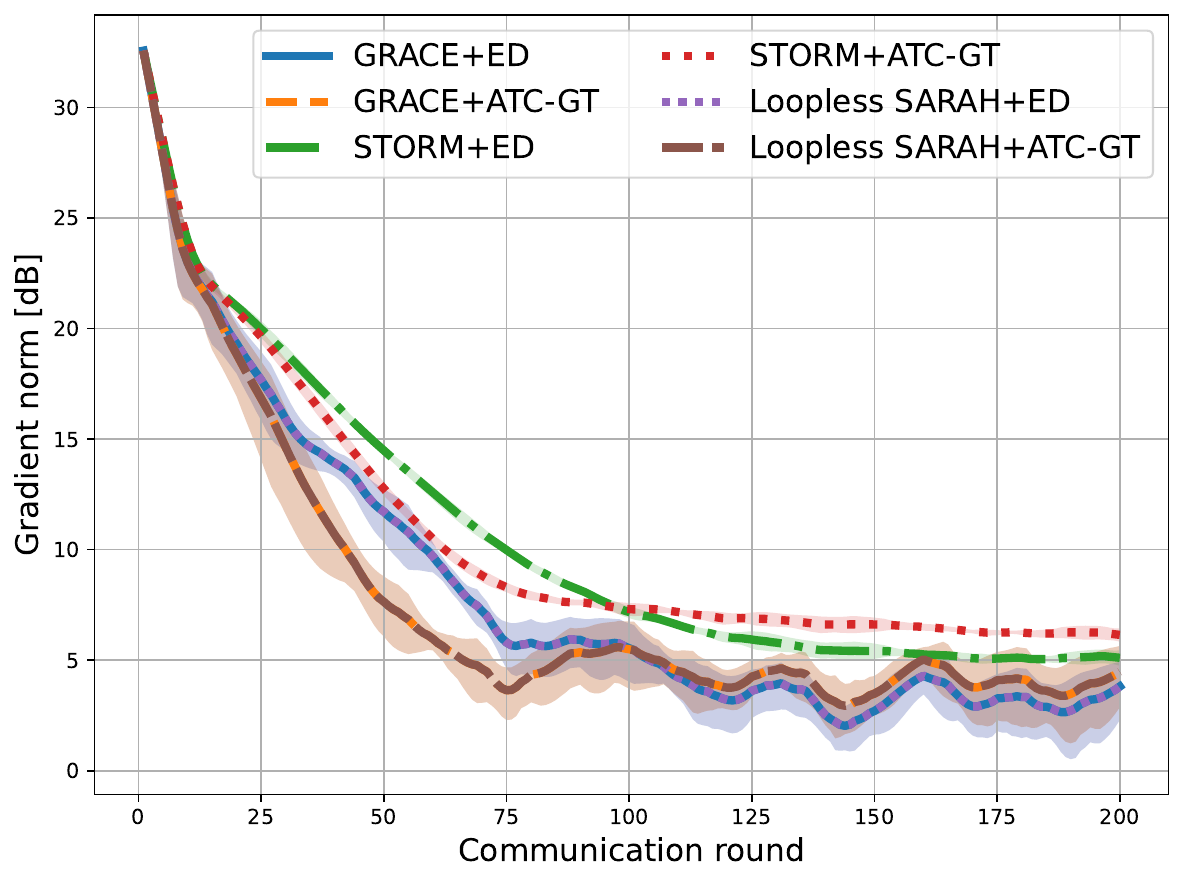}
        \caption{Line network (sparsely-connected)}
        \label{main:fig:line}
    \end{subfigure}
    \caption{Comparison of gradient norm across three different network topologies.}
\label{fig:threecases}
\end{figure*}

We compare important cases of \textbf{DAMA} formed by integrating gradient estimators from the set $\{$"\textbf{GRACE}", "STORM", "Loopless SARAH"$\}$ and decentralized strategies from the set $\{$"ED", "EXTRA", "ATC-GT"$\}$.
To ensure a fair comparison, we adopt the mini-batch variant of the PAGE method for the gradient estimator, i.e., Loopless SARAH-based methods. Note that the proposed \textbf{GRACE} is the hybrid approach of these individual gradient estimators. 
For each case, 
we set hyperparameters as follows: for all algorithms, the step sizes are set to the same value $\mu_x = 0.001$, $\mu_y =0.01$, and the initial batch size $b_0$ is set as $b_0 = 1000$;
for \textbf{GRACE} and STORM, the smoothing factor is set to $\beta_x =\beta_y =\beta =0.01$; for \textbf{GRACE} and Loopless SARAH, the mini-batch size is set as $b =5$ 
and the megabatch size is set as $B=2000$; the Bernoulli parameter $p$ of \textbf{GRACE} and Loopless SARAH is set as $p=0.1$.

The simulation results are presented in Figs.~\ref{main:fig:metropolis}--\ref{main:fig:line}, where we plot the norm of the true gradient at the network centroid over different communication rounds. We observe that, under the same set of hyperparameters, the EXTRA-based instances tend to diverge; therefore, their trajectories are omitted from the plots for clarity. This is because the stability range of hyperparameters for EXTRA-based methods is narrower than that of ED-based methods. Overall, all algorithmic variants achieve comparable performance on well-connected network topologies. However, as the network becomes sparser, the ED-based algorithms begin to outperform their ATC-GT-based counterparts at the steady-state regime. In particular, \textbf{GRACE}+ED and Loopless SARAH+ED achieve notably better performance than STORM+ED at the steady-state regime, albeit at the cost of increased computation.

\subsection{Fair Classifier}

The task of fair classification is to train a model that achieves balanced performance across different data classes \cite{mohri2019agnostic}. This purpose is achieved by reformulating the standard empirical risk minimization problem as a minimax optimization problem, which explicitly forces the neural model to minimize the worst-case (most biased) grouped loss \cite{nouiehed2019solving},
thereby improving the performance on the biased class of data.
In this problem, the optimization objective is given as follows
\begin{align}
&\min_{x} \max_{y} \frac{1}{K}
\sum_{k=1}^{K}
\sum_{c=1}^{C} y_c J_{k, c}(x) - \frac{\rho}{2} \|y\|^2, \notag\\
&\text{where } J_{k,c}(x) = \mE_{\bxi_{k,c} \sim \mathcal{D}_{k,c}}[Q_k(x;\bxi_{k,c})] \notag 
\\
&\text{s.t. } y_{i} \ge 0, ~\forall~ c \in \{1,\dots,C\} \text{ and } \sum_{c=1}^C y_{c} = 1.
\label{simulation:fairclassifier}
\end{align}
Here, $\bxi_{k,c}$ is the random sample of category $c$ at agent $k$ and  $Q_k(x;\bxi_{k,c})$
represents the local loss incurred by category $c$ at the agent $k$. Moreover,  $x$ is the neural network parameter and $\rho$ is the regularization parameter.

In this application, we consider the same network setups as the first numerical example. 
The regularization parameter is set to $\rho = 0.001$.
We use all classes of data of FashionMNIST \cite{xiao2017fashionmnist}, i.e., $C =10$, and each class of data is randomly sharded into $K=20$ agents. 
The neural network structure of the classifier follows \cite{wu2024solving}.
We compare algorithms formed by integrating gradient estimators from the set $\{$"\textbf{GRACE}", "STORM", "Loopless SARAH"$\}$ and decentralized strategies from the set $\{$"ED", "EXTRA", "ATC-GT"$\}$ as well.
The hyperparameters are tuned via grid search and, unless otherwise specified, are set to the same values across methods, or to smaller values to ensure stability.
For ED- and ATC-GT-based method, the step sizes are set as $\mu_x = 0.05, \mu_y =0.1$. The $\mu_x$ of the EXTRA-based method is set as $\mu_x = 0.02$ to ensure stability.
The smoothing factors are set as $\beta_x = \beta_y = 0.95$ for the \textbf{GRACE}- and STORM-based method.
The warm-up batch size $b_0$ and megabatch size $B$
is set as a full batch
while the minibatch size $b$
is set as $b=50$.
The Bornoulli parameter $p$
of \textbf{GRACE}- and Loopless SARAH-based method is set as $p=0.02$.

The simulation results over $10$ independent trials under different network topologies are shown in Figs.~\ref{main:fig:metropolis:classifier}--\ref{main:fig:line:classifier}, where we report the averaged test accuracy across all classes, along with the corresponding  averaged number of stochastic gradient oracle calls over different runs.
Since the training samples are uniformly distributed across agents, data heterogeneity is mild, and it is observed that the performance of the algorithms is comparable across different network topologies.
Under a network generated by the Metropolis rule, we observe that STORM+EXTRA and \textbf{GRACE}+EXTRA outperform the other variants.
Under a sparsely-connected network, 
we observe that 
\textbf{GRACE}+ED (EXTRA), STORM+ED (EXTRA) 
outperform the others.
Furthermore, 
the hybrid approach \textbf{GRACE}-based approach and the  Loopless-SARAH method
incur more stochastic gradient oracles.

\begin{figure}[!htbp]
    \centering
    \begin{subfigure}[b]{0.35\textwidth}
\includegraphics[width=\linewidth]{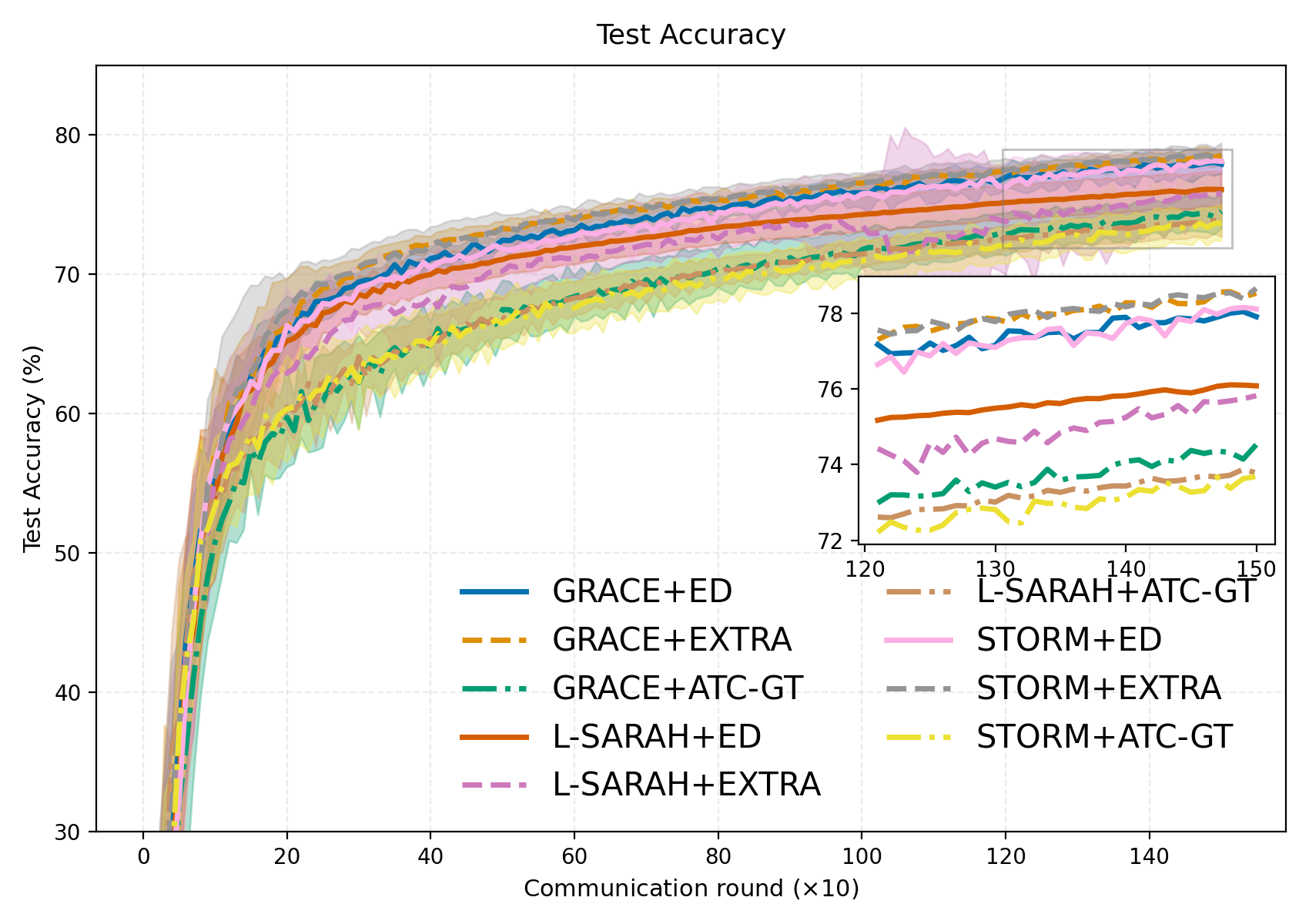}
        \caption{Test accuracy}
\label{main:fig:metropolis:classifier1}
    \end{subfigure}
    \begin{subfigure}[b]{0.35\textwidth}
\includegraphics[width=\linewidth]{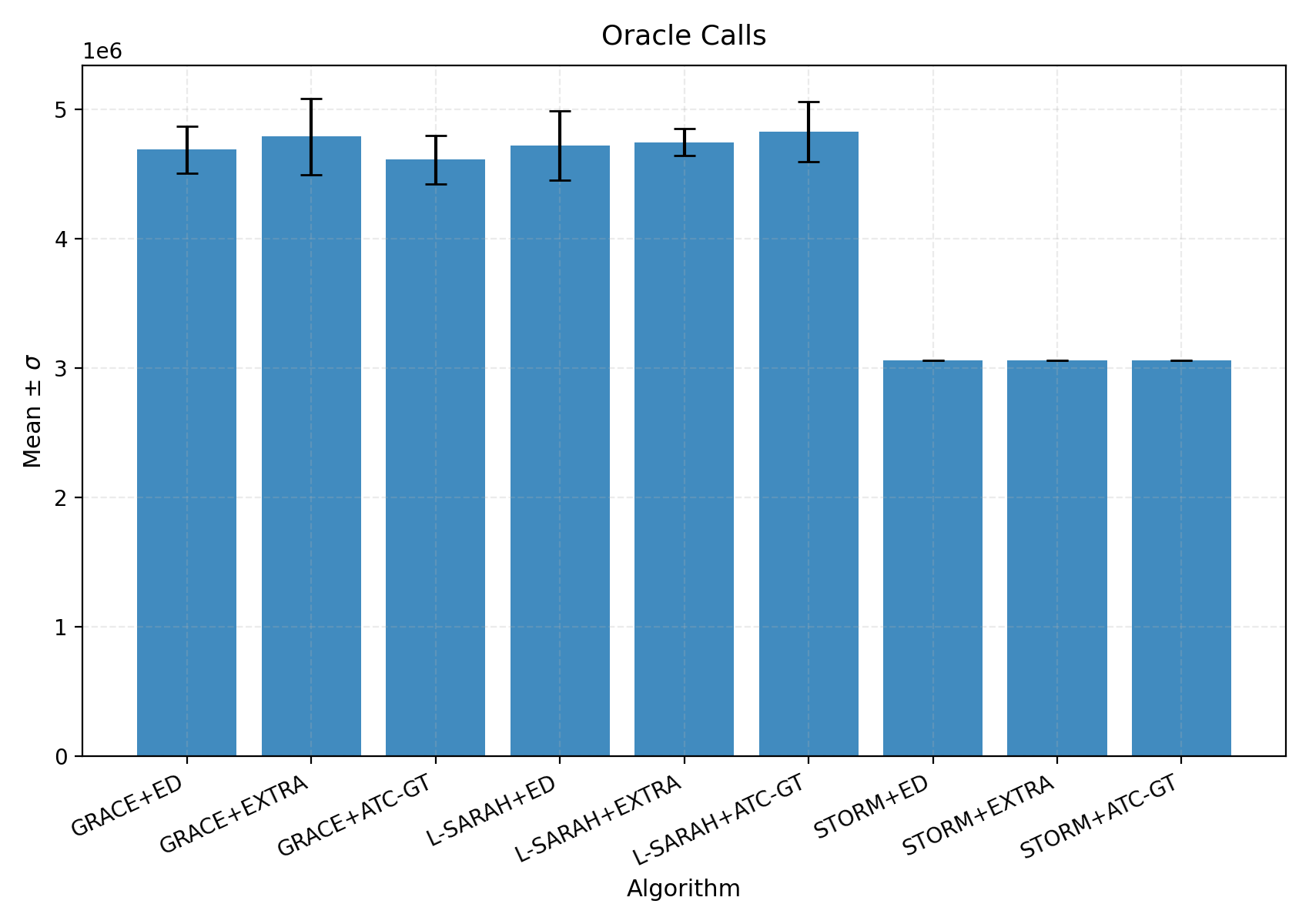}
        \caption{Number of oracle calls}
\label{main:fig:metropolis:classifier2}
    \end{subfigure}
\caption{Performance under a network following the Metropolis rule (well-connected)}
\label{main:fig:metropolis:classifier}
\end{figure}
\begin{figure}[!htbp]
    \centering
    \begin{subfigure}[b]{0.35\textwidth}
\includegraphics[width=\linewidth]{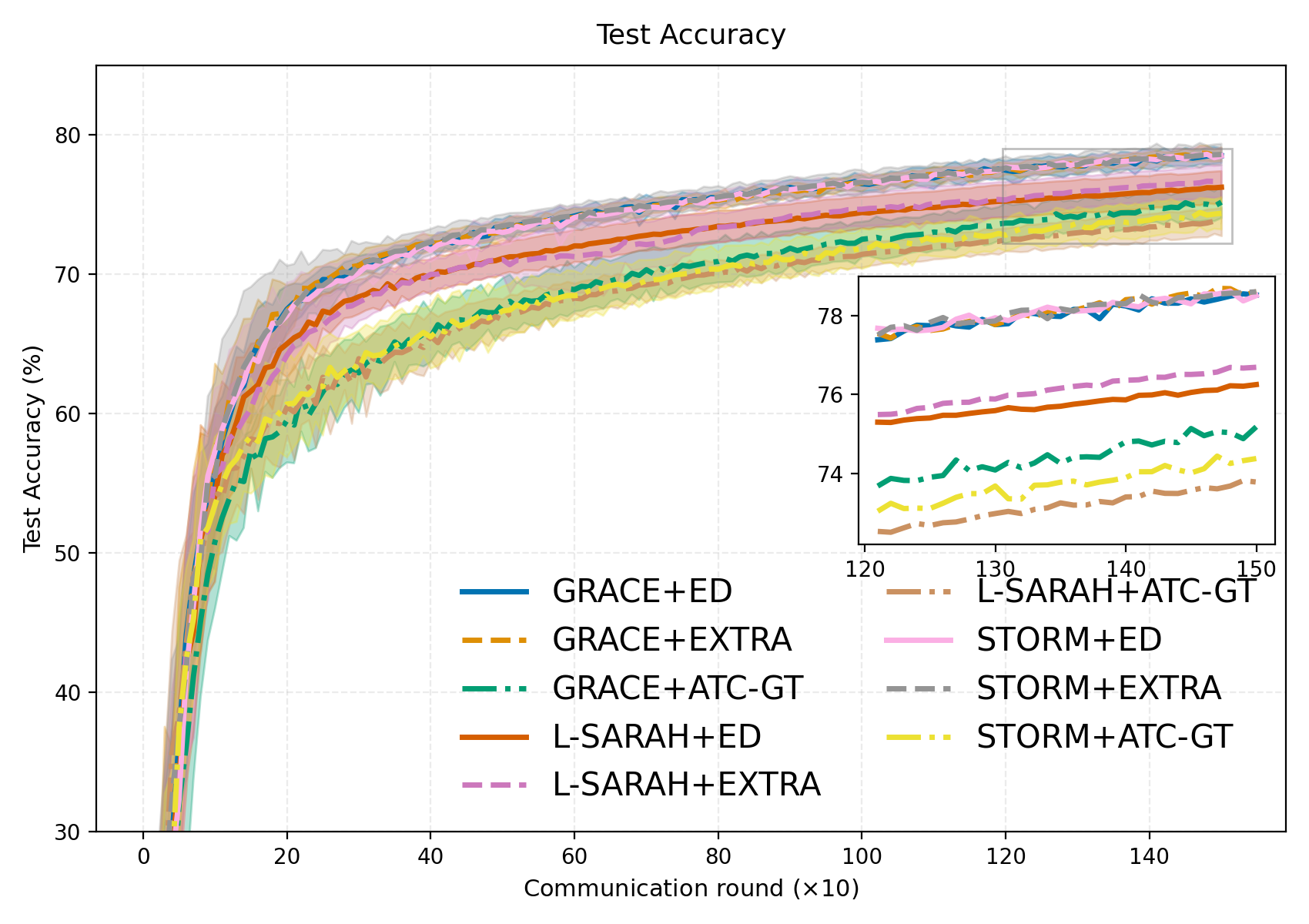}
        \caption{Test accuracy}
\label{main:fig:ring:classifier1}
    \end{subfigure}
    \begin{subfigure}[b]{0.35\textwidth}
\includegraphics[width=\linewidth]{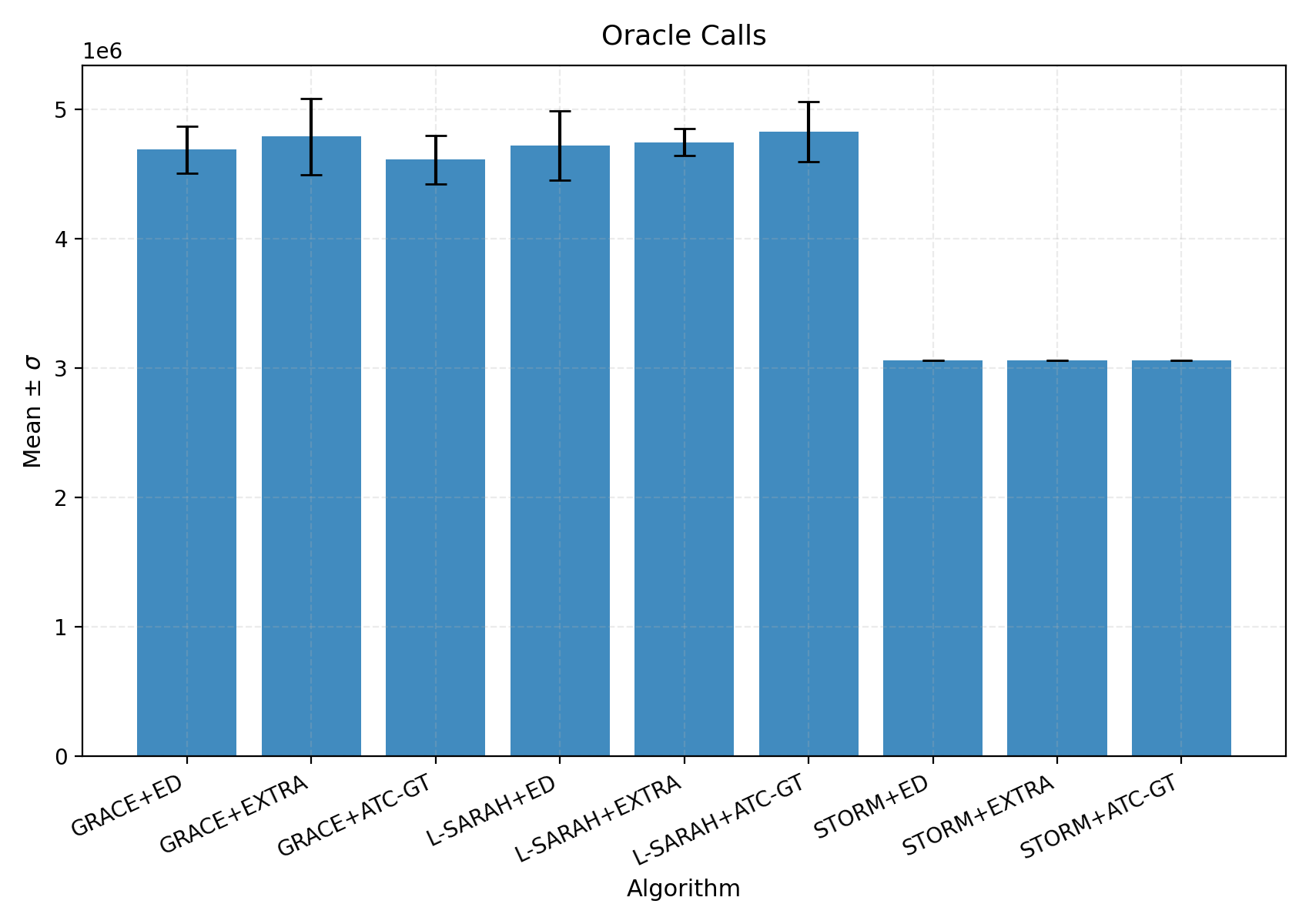}
        \caption{Number of oracle calls}
\label{main:fig:ring:classifier2}
    \end{subfigure}
\label{main:fig:ring:classifier}
\caption{Performance under a ring network (sparsely-connected)}
\end{figure}
\begin{figure}[!htbp]
    \centering
    \begin{subfigure}[b]{0.35\textwidth}
\includegraphics[width=\linewidth]{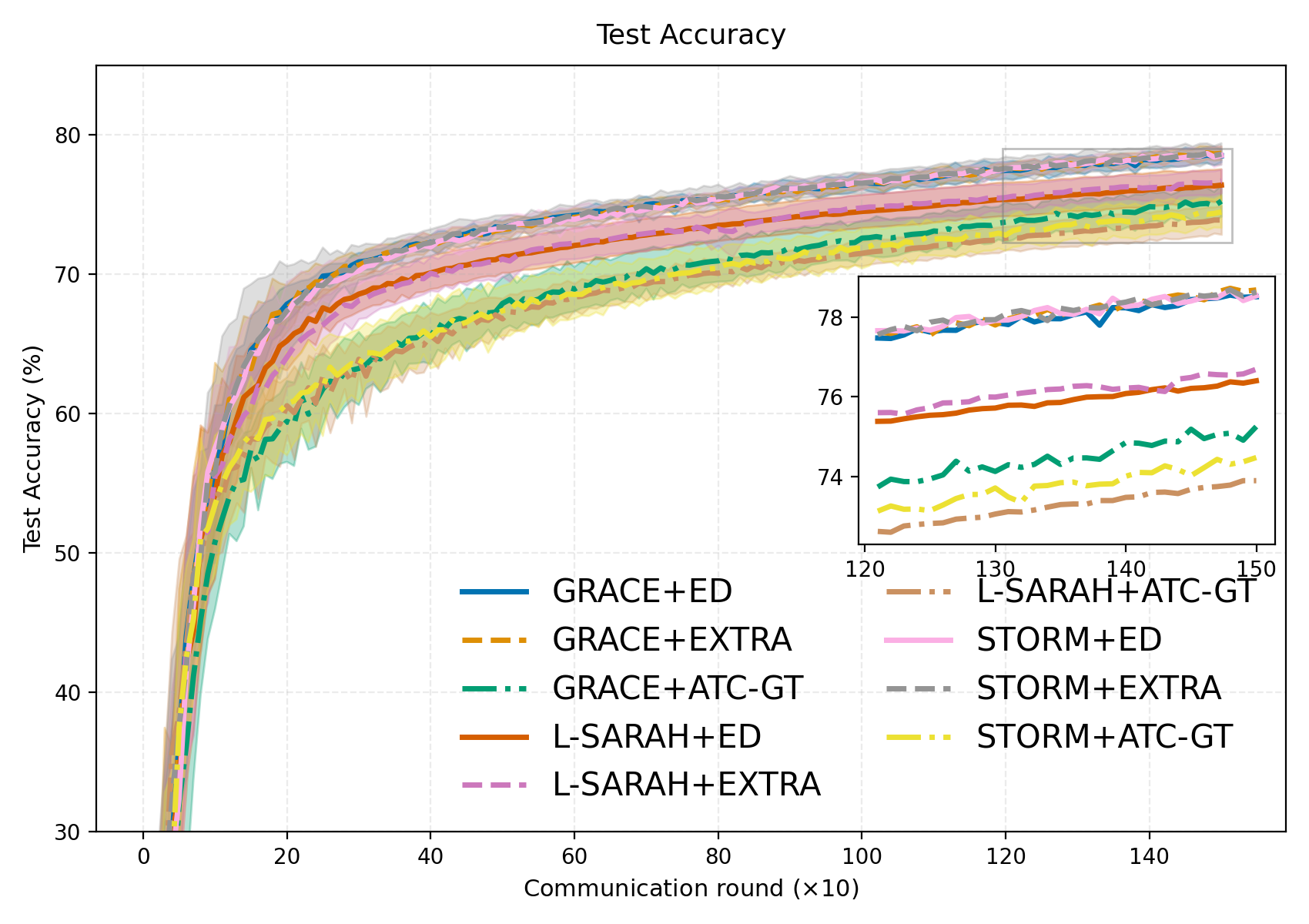}
        \caption{Test accuracy}
\label{main:fig:line:classifier1}
    \end{subfigure}
    \begin{subfigure}[b]{0.35\textwidth}
\includegraphics[width=\linewidth]{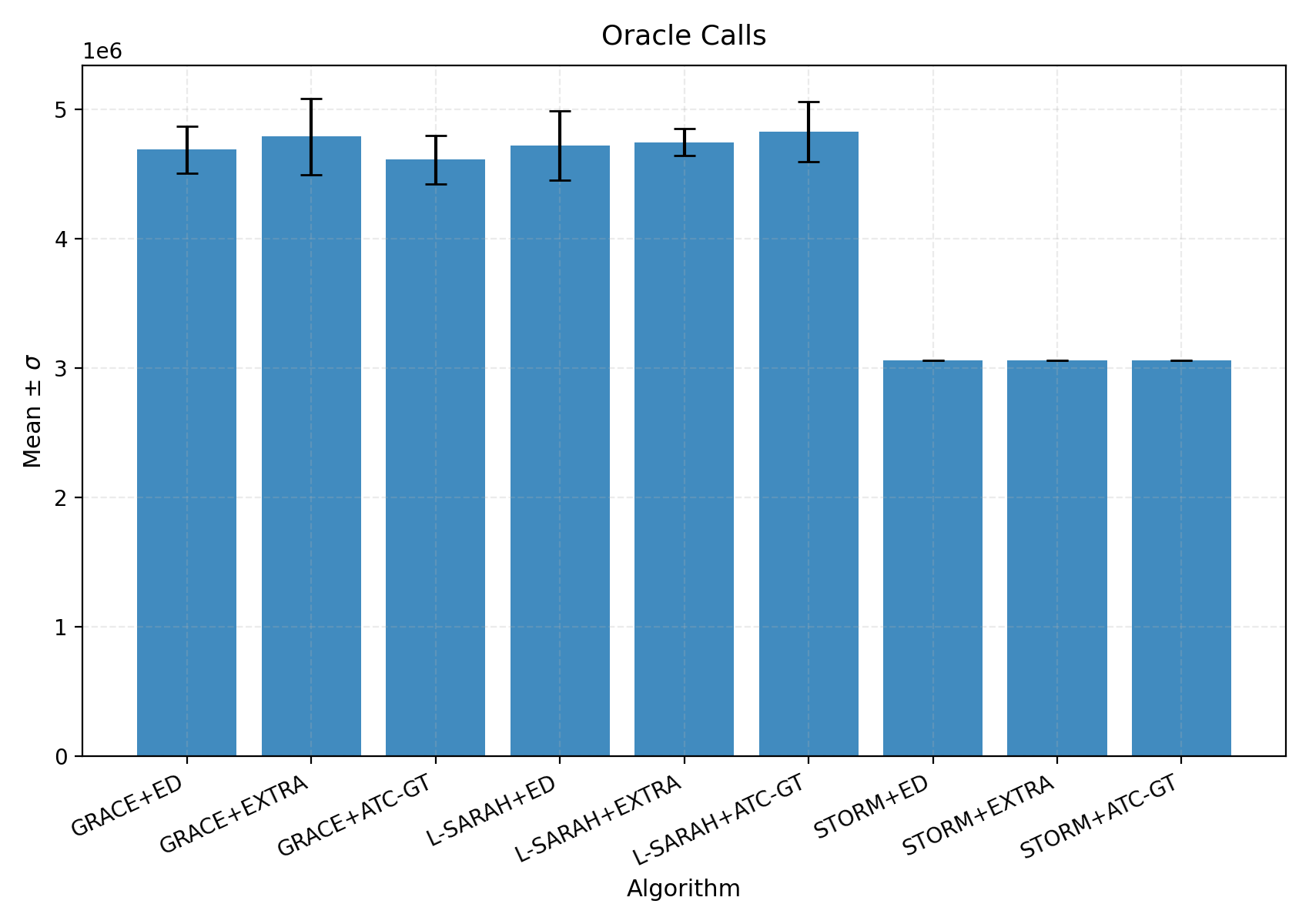}
        \caption{Number of oracle calls}
\label{main:fig:line:classifier2}
\end{subfigure}
\caption{Performance under a line network (sparsely-connected)}
\label{main:fig:line:classifier}
\end{figure}

\section{Conclusions}
In Part I of this work, we proposed a unified framework for decentralized minimax optimization. Within this framework, we introduced a unified gradient estimator \textbf{GRACE} to accelerate. The methodology proposed herein subsumes several existing schemes as special cases, and suggests several other new schemes including methods with superior performance. 
In the accompanying Part II \cite{cai2025dama2},
we conducted a unified theoretical analysis establishing stronger sample complexity results than previously published results, and validated key instances under both well- and sparsely-connected network topologies.

\bibliographystyle{IEEEtran}
\bibliography{refs}
 
\appendix
\section{Preliminaries}

\subsection{Special strategies obtained from \textbf{DAMA}}
\label{appendix:special_realization}
 \textbf{DAMA} framework subsumes several strategies in decentralized optimization, including EXTRA \cite{shi2015extra}, ED (also known as D$^2$) \cite{yuan2018exact,yuan2018exact2,tang2018d}, and a range of GT variants \cite{xu2015augmented,di2016next,qu2017harnessing,nedic2017achieving}, which correspond to specific choices of $\mA_x, \mB_x, \mC_x$ and $\mA_y, \mB_y, \mC_y$ relative to $\mathcal{W}_x$ and $\mathcal{W}_y$, respectively.
In what follows,
we demonstrate different instances of \textbf{DAMA}.
Following \cite{alghunaim2022unified}, we subtract 
\(\mX_{i+1}\)  from \(\mX_{i+2}\)
and 
\(\mY_{i+1}\) 
from 
\(\mY_{i+2}\) in \eqref{main:X_update}---\eqref{main:Y_update}, obtaining:
\begin{subequations}
\begin{align}
&\mX_{i+2} - \mX_{i+1}\overset{(a)}{=}  \mA_x\mC_x(\mX_{i+1} - \mX_{i})
- \mu_x \mA_x (\mM_{x,i+1}-\mM_{x,i})-\mB^2_x \mX_{i+1}  ,
\\
&\mY_{i+2} - \mY_{i+1}  \overset{(b)}{=} \mA_y\mC_y(\mY_{i+1} - \mY_i) 
{\cblue + }\mu_y \mA_y(\mM_{y,i+1} - \mM_{y,i})- \mB^2_y\mY_{i+1},
\end{align}
\end{subequations}
where $(a)$ and $(b)$ follow from the recursions
$\mD_{x,i+1}
= 
\mD_{x,i}
+\mB_x\mX_{i+1}$ and 
$\mD_{y,i+1}
= 
\mD_{y,i}
+\mB_y\mY_{i+1}$.
Rearranging the above equations,
we obtain the following relations 
\begin{subequations}
\begin{align}
    \mX_{i+2} &=
    (\mathrm{I}_{Kd_1} + \mA_x\mC_x - \mB^2_x)\mX_{i+1}-\mA_x\mC_x \mX_i -\mu_x \mA_x (\mM_{x,i+1} - \mM_{x,i}), \label{appendix:x_substract}\\
     \mY_{i+2} &=
    (\mathrm{I}_{Kd_2} + \mA_y\mC_y - \mB^2_y)\mY_{i+1}-\mA_y\mC_y \mY_i  {\cblue + } \mu_y \mA_y (\mM_{y,i+1} - \mM_{y,i}).
    \label{appendix:y_substract}
\end{align}
\end{subequations}

\(\bullet\) \textbf{ED/D$^2$}:

Choosing \(\mA_x = \mW_x\), \(\mC_x = \mathrm{I}_{Kd_1}, \mB_x = (\mathrm{I}_{Kd_1} - \mW_x)^{1/2}\) and 
 \(\mA_y = \mW_y\), \(\mC_y= \mathrm{I}_{Kd_2}, \mB_y = (\mathrm{I}_{Kd_2} - \mW_y)^{1/2}\),
 we get
 \begin{subequations}
\begin{align}
\mX_{i+2} &= \mW_x \Big(2 \mX_{i+1} - \mX_i -\mu_x (\mM_{x,i+1} - \mM_{x,i})\Big),\label{appendix:ED_case}
\\
\mY_{i+2} &= \mW_y \Big(2\mY_{i+1} - \mY_i {\cblue +} \mu_y(\mM_{y,i+1} - \mM_{y,i})\Big),
\end{align}
\end{subequations}
where the iterates are initialized as
$\mX_1 = \mW_x(\mX_0 - \mu_x \mM_{x,0})$ 
and $\mY_1 = \mW_y(\mY_0 {\cblue +} \mu_y \mM_{y,0})$.
We note that the above ED/D$^2$ based strategy has not been studied in the decentralized nonconvex minimax literature.

\(\bullet\) \textbf{EXTRA}

To implement the EXTRA strategy,
we select $\mA_x = \mathrm{I}_{Kd_1}, \mC_x = \mW_x, \mB_x = (\mathrm{I}_{Kd_1} -\mW_x)^{1/2}$
and $\mA_y = \mathrm{I}_{Kd_2}, \mC_y = \mW_y, \mB_y = (\mathrm{I}_{Kd_2} -\mW_y)^{1/2}$
to get 
\begin{subequations}
\begin{align}
\mX_{i+2} &= \mW_x (2 \mX_{i+1} - \mX_i ) -\mu_x (\mM_{x,i+1} - \mM_{x,i}) ,\label{appendix:EXTRA_case}
\\
\mY_{i+2} &= \mW_y (2\mY_{i+1} - \mY_i ) {\cblue +} \mu_y(\mM_{y,i+1} - \mM_{y,i}),
\end{align}
\end{subequations}
where the iterates are initialized as $\mX_{1} = \mW_x \mX_0 - \mu_x\mM_{x,0}$
and $\mY_{1} = \mW_y \mY_0 {\cblue +}\mu_y \mM_{y,0}$.
We also note that the above EXTRA strategy has not been studied in the decentralized nonconvex minimax literature.

\(\bullet\) \textbf{ATC-GT} \\
We next show that the ATC-GT–based method can be recovered from \eqref{appendix:x_substract}–\eqref{appendix:y_substract} as a special case.
If we use 
$\mG_{x,i} \in \mR^{Kd_1}$
and $\mG_{y,i} \in \mR^{Kd_2}$
to denote the tracking variable of the gradient associated with the networked $x$-
and $y$-variable, respectively, then the 
ATC-GT-based minimax algorithm employs the following steps
\begin{subequations}
\begin{align}
\mX_{i+1} &= \mW_x (\mX_{i} - \mu_x \mG_{x,i}) \label{proof:ATC-GTx}, \\
\mY_{i+1} &= \mW_y (\mY_{i} {\cblue +} \mu_y \mG_{y,i}) \label{proof:ATC-GTy},\\
\mG_{x, i+1} & = \mW_x(\mG_{x,i} +\mM_{x,i+1} - \mM_{x,i})  \label{proof:ATC-GTgx},\\
\mG_{y,i+1} & = \mW_y(\mG_{y,i}  + \mM_{y,i+1} - \mM_{y,i})\label{proof:ATC-GTgy},
\end{align}
\end{subequations}
where the quantities are initialized as
$\mG_{x,0} = \mW_x \mM_{x,0}, \mG_{y,0} = \mW_y \mM_{y,0}$ 
and $\mX_{0} = \mW_x \mX_0$ $(\bx_{1,0} = \cdots =\bx_{K,0}), \mY_{0} = \mW_y \mY_0$ $(\by_{1,0} = \cdots =\by_{K,0})$.
Using 
  relations \ref{proof:ATC-GTx}--\eqref{proof:ATC-GTgy} and 
subtracting \(\mW_x \mX_{i+1}\) from  \(\mX_{i+2}\) and \(\mW_y \mY_{i+1}\) from 
  \(\mY_{i+2}\), we obtain
\begin{subequations}
\begin{align}
&\mX_{i+2} - \mW_x \mX_{i+1} \notag\\
&= 
\mW_x (\mX_{i+1} -\mu_x \mG_{x,i+1}) - \mW_x \mX_{i+1} \notag \\
&= \mW_x (\mX_{i+1} -\mu_x \mG_{x,i+1}) - \mW^2_x(\mX_{i} - \mu_x \mG_{x,i}) \notag  \\
&= \mW_x \mX_{i+1} - \mW^2_x \mX_i -\mu_x \mW^2_x (\mM_{x,i+1} - \mM_{x,i}) \label{appendix:ATC-GT-diffx} ,
\end{align}
\begin{align}
&\mY_{i+2} - \mW_y \mY_{i+1} \notag\\
&= 
\mW_y (\mY_{i+1} {\cblue + }\mu_y \mG_{y,i+1}) - \mW_y \mY_{i+1} \notag \\
&= \mW_y (\mY_{i+1}  {\cblue +} \mu_y \mG_{y,i+1}) - \mW^2_y(\mY_{i} {\cblue +} \mu_y \mG_{y,i}) \notag \\
&= \mW_y \mY_{i+1} - \mW^2_y \mY_i  {\cblue + }\mu_y \mW^2_y (\mM_{y,i+1} - \mM_{y,i}) .
\label{appendix:ATC-GT-diffy} 
\end{align}
\end{subequations}
Rearranging the results, we get
\begin{subequations}
    \begin{align}
    \mX_{i+2} &= 2\mW_x \mX_{i+1} - \mW^2_x \mX_i - \mu_x\mW^2_x ( \mM_{x,i+1} -\mM_{x,i})  ,\\
    \mY_{i+2} &= 2\mW_y \mY_{i+1} - \mW^2_y \mY_i
{\cblue +}\mu_y \mW^2_y(\mM_{y,i+1} - \mM_{y,i}),
\end{align} 
\end{subequations}
where the iterates are initialized as 
$\mX_{1} = \mW^2_x(\mX_0 - \mu_x\mM_{x,0})$, $\mY_{1} = \mW^2_y(\mY_0 {\cblue + } \mu_y\mM_{y,0})$, $\mX_0 = \mW_x \mX_0$
and $\mY_0 = \mW_y \mY_0$.
We observe that \eqref{appendix:x_substract}-\eqref{appendix:y_substract} is 
equivalent to
 the above recursion
by choosing  $\mA_x = \mW^2_x,\mC_x = \mathrm{I}_{Kd_1}, \mB_x = (\mathrm{I}_{Kd_1} - \mW_x)$
and  $\mA_y = \mW^2_y,\mC_y = \mathrm{I}_{Kd_2}, \mB_y = (\mathrm{I}_{Kd_2} - \mW_y)$.
Specifically, when $\mM_{x,i}$ and $\mM_{y,i}$ are constructed via STORM \cite{cutkosky2019momentum}, 
i.e., 
with $p=0, \gamma_1 =1, \gamma_2 = 0, b = 1$ in the unified strategy \textbf{GRACE},
our algorithm recovers the DM-HSDA algorithm proposed by \cite{xian2021faster}.
When they are built with PAGE \cite{li2021page}, i.e., 
setting $\beta_x=\beta_y =0, p\not=0, \gamma_1 = 1, \gamma_2 =0$ 
for the unified strategy \textbf{GRACE}, 
\textbf{DAMA} specializes to DREAM \cite{chen2024efficient} with one-step mixing. 
Finally, when constructed as in \cite{cai2024accelerated} with $p=0, \gamma_1=0,\gamma_2 =1, b=1$,
our framework yields the  algorithm of \cite{cai2025communication} without the local step.

\(\bullet\) \textbf{semi-ATC-GT}

The semi-adapt-then-combine GT (semi-ATC-GT) employs a hybrid learning strategy. Specifically, it combines a consensus learning step for local models and the adapt-then-combine diffusion step for updating the tracking variable. 
When a semi-ATC-GT strategy is applied to solve a minimax problem, it admits the following form:
\begin{subequations}
\begin{align}
\mX_{i+1} &= \mW_x\mX_i - \mu_x \mG_{x,i}  ,\\
\mY_{i+1} &= \mW_y\mY_i {\cblue +} \mu_y \mG_{y,i} ,\\
\mG_{x,i+1} & = \mW_x (\mG_{x,i} +\mM_{x,i+1} - \mM_{x,i}),
\\
\mG_{y,i+1} & = \mW_y (\mG_{y,i} + \mM_{x,i+1} - \mM_{x,i}),
\end{align}
\end{subequations}
where the iterates are initialized as
$\mG_{x,0} =  \mM_{x,0}, \mG_{y,0} =  \mM_{y,0}$ 
and $\mX_{0} = \mW_x \mX_0 (\bx_{1,0} = \cdots =\bx_{K,0}), \mY_{0} = \mW_y \mY_0 (\by_{1,0} = \cdots =\by_{K,0})$.
Following an argument similar to 
\eqref{appendix:ATC-GT-diffx}--
\eqref{appendix:ATC-GT-diffy},
we can transform the above recursions into 
\begin{subequations}
   \begin{align}
\mX_{i+2} 
&= 2\mW_x \mX_{i+1} - \mW^2_x \mX_i -\mu_x \mW_x (\mM_{x,i+1} - \mM_{x,i}) \label{appendix:semi-ATC-GT-diffx}, \\
\mY_{i+2}  
&= 2\mW_y \mY_{i+1} - \mW^2_y \mY_i  {\cblue +}\mu_y \mW_y (\mM_{y,i+1} - \mM_{y,i}), \label{appendix:semi-ATC-GT-diffy} 
\end{align} 
with 
\(\mX_{1} = \mW_x(\mX_0 - \mu_x \mM_{x,0})\), \(\mY_{1} = \mW_y(\mY_0 {\cblue +} \mu_y \mM_{y,0})\), \(\mX_0 = \mW_x \mX_0\) and \(\mY_0 = \mW_y \mY_0\).
We note that \eqref{appendix:x_substract}--\eqref{appendix:y_substract}
is equivalent to the above recursions 
when choosing 
$\mA_x = \mW_x, \mC_x = \mW_x, \mB_x = (\mathrm{I}_{Kd_1} - \mW_x)$
and $\mA_y = \mW_y, \mC_y = \mW_y, \mB_y = (\mathrm{I}_{Kd_2} - \mW_y)$.
\end{subequations}
Specifically, if $\mM_{x,i}$ and $\mM_{y,i}$
are constructed by SARAH \cite{nguyen2017sarah} with $\beta_x =\beta_y =0, p \not = 0, \gamma_1=1, \gamma_2 =0$ and periodically activate the large-batch gradient computation in \textbf{GRACE},
we can recover the 
DGDA-VR method
studied in 
\cite{zhang2024jointly}.

\(\bullet\) \textbf{non-ATC-GT}

The non-adapt-then-combine GT (non-ATC-GT) employs a consensus learning strategy for both the local models and the 
tracking variables.
When applied to a minimax problem, it can be expressed in the following form:
\begin{subequations}
\begin{align}
\mX_{i+1} &= \mW_x\mX_i - \mu_x \mG_{x,i}  ,\\
\mY_{i+1} &= \mW_y\mY_i {\cblue +} \mu_y \mG_{y,i} ,\\
\mG_{x,i+1} & = \mW_x \mG_{x,i} +\mM_{x,i+1} - \mM_{x,i},
\\
\mG_{y,i+1} & = \mW_y \mG_{y,i} + \mM_{x,i+1} - \mM_{x,i},
\end{align}
\end{subequations}
where the iterates are initialized as
$\mG_{x,0} =  \mM_{x,0}, \mG_{y,0} =  \mM_{y,0}$ 
and $\mX_{0} = \mW_x \mX_0 (\bx_{1,0} = \cdots =\bx_{K,0}), \mY_{0} = \mW_y \mY_0 (\by_{1,0} = \cdots =\by_{K,0})$.
Following an argument similar to 
\eqref{appendix:ATC-GT-diffx}--
\eqref{appendix:ATC-GT-diffy},
we get 
\begin{subequations}
    \begin{align}
        \mX_{i+2} 
&= 2\mW_x \mX_{i+1} - \mW^2_x \mX_i -\mu_x  (\mM_{x,i+1} - \mM_{x,i}) ,\label{appendix:non-ATC-GT-diffx} \\
\mY_{i+2}  
&= 2\mW_y \mY_{i+1} - \mW^2_y \mY_i  {\cblue +}\mu_y (\mM_{y,i+1} - \mM_{y,i}) \label{appendix:non-ATC-GT-diffy},
\end{align}
\end{subequations}
with $\mX_1 = \mX_0 - \mu_x \mM_{x,0}$, $\mY_1 = \mY_0 {\cblue +} \mu_y \mM_{y,0}$, \(\mX_0 = \mW_x \mX_0\) and \(\mY_0 = \mW_y \mY_0\).
We note that \eqref{appendix:x_substract}-\eqref{appendix:y_substract}
is equivalent to 
the above recursions 
when choosing 
$\mA_x = \mathrm{I}_{Kd_1}, \mC_x = \mW^2_x, \mB_x = (\mathrm{I}_{Kd_1} - \mW_x)$
and $\mA_y = \mathrm{I}_{Kd_2}, \mC_y = \mW^2_y, \mB_y = (\mathrm{I}_{Kd_2} - \mW_y)$.
Such a case has not been considered in the existing literature.

\subsection{Fundamental transformation for the networked recursions}
\label{appendix:lineartransform}

This subsection provides the detailed technical development of Section \ref{main:sec:tranasformation}. The recursions \eqref{main:X_update}–\eqref{main:Uy_update} are formulated for implementation purposes; in what follows, we derive a transformed version more suitable for theoretical analysis. Specifically, we first obtain an intermediate transformation of \eqref{main:X_update}–\eqref{main:Uy_update}, and then apply a fundamental factorization of the associated transition matrix to arrive at the final form. The subsequent analysis is carried out on this transformed recursion. To this end, we rewrite the original networked recursions \eqref{main:X_update}–\eqref{main:Uy_update} as follows:
\begin{subequations}
\begin{align}
\mX_{i+1} &= \mA_x(\mC_x \mX_i -\mu_x \mM_{x,i}) - \mB_x \mD_{x,i} \notag \\
&\overset{(a)}{=} \mA_x(\mC_x \mX_i -\mu_x \mM_{x,i})  - \mB_x
(\mD_{x,i} -\mB_x \mX_i + \mB_x \mX_{i})
\notag \\
&= (\mA_x\mC_x -\mB^2_x) \mX_i
- (\mu_x \mA_x \mM_{x,i} +\mB_x \mD_{x,i}-\mB^2_x \mX_{i}) ,
\end{align}
\begin{align}
\boldsymbol{\mathcal{Y}}_{i+1}
&= \mathcal{A}_y(\mathcal{C}_y \mY_i {\cblue + } \mu_y
\mM_{y,i}) - \mathcal{B}_y
\mD_{y,i} \notag \\
&\overset{(b)}{=}  
\mathcal{A}_y(\mathcal{C}_y \mY_i {\cblue + } \mu_y
\mM_{y,i}) - \mathcal{B}_y
(\mD_{y,i} - \mB_y \mY_i+\mB_y \mY_i) \notag \\
&=
(\mA_y\mC_y -\mB^2_y)\mY_i
{\cblue + } (\mu_y\mA_y \mM_{y,i} -\mB_y\mD_{y,i}+\mB^2_y\mY_i),
\end{align}
\end{subequations}
where $(a)$ and $(b)$ follows by adding and subtracting
 $\mB_{x}\mX_{i}$, $\mB_y\mY_{i}$ respectively.
We now introduce
the following auxiliary variables
\begin{subequations}
\begin{align}
\label{proof:ZxExpression}
    \mZ_{x,i} &\triangleq \mu_x \mA_x\mM_{x,i} + \mB_x\mD_{x,i}-\mB^2_x\mX_{i} ,\\
    \mZ_{y,i} &\triangleq {\cblue -}\mu_y \mA_y \mM_{y,i} + \mB_y \mD_{y,i}-\mB^2_{y} \mY_{i}.
\end{align}
\end{subequations}
We then deduce recursions for $\mZ_{x,i}, \mZ_{y,i}$
as follows
\begin{subequations}
\begin{align}
&\mZ_{x,i+1} - \mZ_{x,i} \notag \\
&= \mu_x \mA_x(\mM_{x,i+1} - \mM_{x,i}) +
\mB_{x} (\mD_{x,i+1} - \mD_{x,i})  - \mB^2_x(\mX_{i+1} - \mX_{i}) \notag \\
&\overset{(a)}{=}\mu_x \mA_x(\mM_{x,i+1} - \mM_{x,i})
+\mB^2_{x} \mX_{i},
\end{align}
\begin{align}
&\mZ_{y,i+1} - \mZ_{y,i}  \notag \\
&={\cblue -}\mu_y\mA_y
(\mM_{y,i+1} - \mM_{y,i})
+
\mB_{y} (\mD_{y,i+1} - \mD_{y,i}) - \mB^2_y(\mY_{i+1} - \mY_{i}) \notag \\
&\overset{(b)}{=} {\cblue -}
\mu_y\mA_y
(\mM_{y,i+1}-\mM_{y,i}) +\mB^2_y \mY_i,
\end{align}
\end{subequations}
where $(a)$ and $(b)$ follow from the relation
$\mD_{x,i+1}
= 
\mD_{x,i}
+\mB_x\mX_{i+1}$ and 
$\mD_{y,i+1}
= 
\mD_{y,i}
+\mB_y\mY_{i+1}$.
Putting the above results together, we arrive at the following equivalnet recursions
\begin{subequations}
\begin{align}
    \mX_{i+1} &= (\mA_x\mC_x - \mB^2_x)\mX_i - \mZ_{x,i} \label{appendix:x_update_rule},\\
     \mY_{i+1}&= (\mA_y\mC_y - \mB^2_y) \mY_i- \mZ_{y,i} \label{appendix:y_update_rule},\\
    \mZ_{x,i+1} &=  \mZ_{x,i}  + \mB^2_x \mX_i +\mu_x \mA_x(\mM_{x,i+1} - \mM_{x,i}) \label{appendix:zx_update_rule},\\
    \mZ_{y,i+1} &= \mZ_{y,i}   + \mB^2_y \mY_i {\cblue -} \mu_y \mA_y(\mM_{y,i+1} - \mM_{y,i}) 
\label{appendix:zy_update_rule}.
\end{align}
\end{subequations}

Let us now apply a linear transformation to the recursions \eqref{appendix:x_update_rule}
--\eqref{appendix:zy_update_rule} by utilizing the structure of the combination matrix given by \eqref{main_decomp} in Section \ref{main:sec:tranasformation}. 
 Multiplying  both sides of  \eqref{appendix:x_update_rule} by $\mU^\top_{x}$, we get 
 \begin{align}
\mU^\top_x \mX_{i+1}
= \mU^\top_x (\mA_x\mC_x - \mB^2_x) \mX_i
- \mU^\top_x \mZ_{x,i}.
 \end{align}
Note that each component of the above recursions can be partitioned into smaller blocks as follows:
\begin{align}
\mU^\top_x \mX_{i+1} &= \begin{bmatrix}
\frac{1}{\sqrt{K}} \mathds{1}^\top_K \otimes \mathrm{I}_{d_1} \\
\widehat{\mU}^\top_x
\end{bmatrix}
\mX_{i+1}
= \begin{bmatrix}
    \sqrt{K} \bx_{c,i+1} \\
    \widehat{\mU}^\top_x \mX_{i+1}
\end{bmatrix} \label{proof:partition_1}
,\\
\mU^\top_x(\mA_x\mC_x - \mB^2_x)\mX_i&\overset{(a)}{=} \begin{bmatrix}
    \mathrm{I}_{d_1} &0\\
    0& \widehat{\Lambda}_{a_x} \widehat{\Lambda}_{c_x} - \widehat{\Lambda}^2_{b_x}
\end{bmatrix}
\begin{bmatrix}
    \sqrt{K}\bx_{c,i}\\
    \widehat{\mU}^\top_x\mX_i
\end{bmatrix}  \notag\\
&= \begin{bmatrix}
    \sqrt{K}\bx_{c,i}
    \\  (\widehat{\Lambda}_{a_x} \widehat{\Lambda}_{c_x} - \widehat{\Lambda}^2_{b_x})\widehat{\mU}^\top_x\mX_i
\end{bmatrix} ,
\label{proof:partition_2}
\end{align} 
where $(a)$ follows from the
eigen-decompositions 
\eqref{proof:Ax_eigen}--\eqref{proof:Bx_eigen}.
Moreover, we have
\begin{align}
\mU^\top_x \mZ_{x,i} &=
\begin{bmatrix}
\frac{1}{\sqrt{K}} \mathds{1}^\top_K \otimes \mathrm{I}_{d_1} \\
\widehat{\mU}^\top_x
\end{bmatrix} \mZ_{x,i} \notag \\
&\overset{(a)}{=}  \begin{bmatrix}
\frac{1}{\sqrt{K}} \mathds{1}^\top_K \otimes \mathrm{I}_{d_1} ( \mu_x \mA_x \mM_{x,i} + \mB_x\mD_{x,i}-\mB^2_x\mX_{i}) \\
\widehat{\mU}^\top_x \mZ_{x,i} 
\end{bmatrix}  \notag \\
&\overset{(b)}{=}\begin{bmatrix}
\mu_x \frac{1}{\sqrt{K}} \mathds{1}^\top_K \otimes \mathrm{I}_{d_1}\mM_{x,i}\\
\widehat{\mU}^\top_x \mZ_{x,i}
\end{bmatrix} \notag \\
&=
\begin{bmatrix}
\frac{\mu_x}{\sqrt{K}}
\sum_{k=1}^{K} \bm^x_{k,i}\\
\widehat{\mU}^\top_x \mZ_{x,i}
\end{bmatrix} , \label{proof:partition_3}
\end{align}
where $(a)$ follows from \eqref{proof:ZxExpression}, and $(b)$ is due to the fact that $\mA_x$ is doubly-stochastic and $\mathds{1}_K \otimes \mathrm{I}_{d_1} \in \mathrm{null}\{\mB_x\}$.
Putting the results \eqref{proof:partition_1}-- \eqref{proof:partition_3} together, 
we get 
\begin{align}
&\begin{bmatrix}
\sqrt{K} \bx_{c,i+1} \\
\widehat{\mU}^\top_x \mX_{i+1}
\end{bmatrix} =\begin{bmatrix}
    \sqrt{K}\bx_{c,i}
    \\  (\widehat{\Lambda}_{a_x} \widehat{\Lambda}_{c_x} - \widehat{\Lambda}^2_{b_x} )\widehat{\mU}^\top_x\mX_i
\end{bmatrix}
 - \begin{bmatrix}
\frac{\mu_x}{\sqrt{K}}
\sum_{k=1}^{K}\bm^x_{k,i}\\
\widehat{\mU}^\top_x \mZ_{x,i}
\end{bmatrix}. \label{proof:partition_final1}
\end{align}
We proceed by applying a linear transformation to the recursion \eqref{appendix:zx_update_rule}.
Multiplying both sides of 
\eqref{appendix:zx_update_rule} by \(\mU^\top_x\), we get 
\begin{align}
\mU^\top_x  \mZ_{x,i+1}= \mU^\top_x  \mZ_{x,i} + 
\mU^\top_x  \mB^2_x \mX_i + \mu_x 
\mU^\top_x \mA_x (\mM_{x,i+1} - \mM_{x,i}).
\end{align}
Following 
similar arguments
 to 
\eqref{proof:partition_1}-- \eqref{proof:partition_3}
and using
\begin{align}
\mU^\top_x \mB^2_x \mX_{i} 
= 
\begin{bmatrix}
0 &0\\
0& \widehat{\Lambda}^2_{b_x}
\end{bmatrix}
\begin{bmatrix}
\frac{1}{\sqrt{K}}
\mathds{1}^\top_K \otimes I_{d_1}\\
\widehat{\mU}^\top_x
\end{bmatrix}
\mX_{i} = \begin{bmatrix}
0\\
\widehat{\Lambda}^2_{b_x} \widehat{\mU}^\top_x \mX_i
\end{bmatrix},
\end{align}
\begin{align}
\mU^\top_x \mA_x(\mM_{x,i+1} - \mM_{x,i}) \notag 
&
= 
\begin{bmatrix}
    \mathrm{I}_{d_1} &0\\
    0& \widehat{\Lambda}_{a_x}
\end{bmatrix}
\begin{bmatrix}
\frac{1}{\sqrt{K}}\mathds{1}^\top_K \otimes \mathrm{I}_{d_1} \Big(\mM_{x,i+1} -\mM_{x,i} \Big)\\
\widehat{\mU}^\top_x\Big(\mM_{x,i+1} - \mM_{x,i}\Big)
\end{bmatrix} \notag \\
&=
\begin{bmatrix}
\frac{1}{\sqrt{K}}
(\sum_{k=1}^{K} \bm^x_{k,i+1} -  \sum_{k=1}^{K} \bm^x_{k,i})\\
\widehat{\Lambda}_{a_x} \widehat{\mU}^\top_x 
\Big(\mM_{x,i+1} - \mM_{x,i}\Big)
\end{bmatrix} .
\label{proof:partition_4}
\end{align}
 we get
\begin{align}
&\begin{bmatrix}
\frac{\mu_x}{\sqrt{K}}
\sum_{k=1}^{K} \bm^x_{k,i+1} \\
\widehat{\mU}^\top_x \mZ_{x,i+1}
\end{bmatrix}
=\begin{bmatrix}
\frac{\mu_x}{\sqrt{K}}
\sum_{k=1}^{K}\bm^x_{k,i} \\
\widehat{\mU}^\top_x \mZ_{x,i}
\end{bmatrix}
+ 
\begin{bmatrix}
0\\
\widehat{\Lambda}^2_{b_x}\widehat{\mU}^\top_x \mX_i
\end{bmatrix} 
+\mu_x \begin{bmatrix}
    \frac{1}{\sqrt{K}} (\sum_{k=1}^K \bm^x_{k,i+1}- \sum_{k=1}^{K} \bm^x_{k,i}) \\
    \widehat{\Lambda}_{a_x} \widehat{\mU}^\top_x(\mM_{x,i+1} - \mM_{x,i})
\end{bmatrix}.
\label{proof:partition_final2}
\end{align}
Note that the first block of the above relation is trivial, as identical terms on both sides cancel out.
Hence,
the nontrivial part of 
\eqref{proof:partition_final1}
 and 
\eqref{proof:partition_final2}
can be equivalently rewritten as follows:
\begin{subequations}
\begin{align}
&\bx_{c,i+1} = \bx_{c,i} - \frac{\mu_x}{K}
\sum_{k=1}^{K}
\bm^x_{k,i}, \\
&\widehat{\mU}^\top_x \mX_{i+1}
= (\widehat{\Lambda}_{a_x}\widehat{\Lambda}_{c_x} - \widehat{\Lambda}^2_{b_x})
\widehat{\mU}^\top_x \mX_{i}
- \widehat{\mU}^\top_x \mZ_{x,i} ,\\
&\widehat{\mU}^\top_x \mZ_{x,i+1}  = \widehat{\mU}^\top_{x}\mZ_{x,i}
+ \widehat{\Lambda}^2_{b_x} \widehat{\mU}^\top_x \mX_{i}  + 
\mu_x \widehat{\Lambda}_{a_x} \widehat{\mU}^\top_x
(\mM_{x,i+1} - \mM_{x,i}).
\end{align}
\end{subequations}

We can also apply a similar transformation  to $\mY_{i+1}$ and $\mZ_{y,i+1}$ 
by multiplying the $\mU^\top_{y}$
on both sides of their respective recursions.
By doing so, 
we obtain a transformed recursion regarding $\by_{c,i+1}$, $\widehat{\mU}^\top_y \mY_{i+1}$, $\widehat{\mU}^\top_y \mZ_{y, i+1}$
as follows 
\begin{subequations}
\begin{align}
&\by_{c,i+1} = \by_{c, i}\textcolor{blue}{ + }   \frac{\mu_y}{K}\sum_{k=1}^K \bm^y_{k,i} ,\\
&\widehat{\mU}^\top_y \mY_{i+1}
= (\widehat{\Lambda}_{a_y} \widehat{\Lambda}_{c_y} - \widehat{\Lambda}^2_{b_y} )\widehat{\mU}^\top_y\mY_i -\widehat{\mU}^\top_y \mZ_{y,i} , \\
 &\widehat{\mU}^\top_y \mZ_{y,i+1}  = \widehat{\mU}^\top_y \mZ_{y,i}
+\widehat{\Lambda}^2_{b_y}\widehat{\mU}^\top_y \mY_i  \textcolor{blue}{-}\mu_y 
\widehat{\Lambda}_{a_y}\widehat{\mU}^\top_y(\mM_{y,i+1} - \mM_{y,i}) .
\end{align}
\end{subequations}
In the following, we will rewrite each block of the above transformed recursion together.
At the same time, we
multiply both sides of the recursion 
regarding \(\widehat{\mU}^\top_x \mZ_{x,i+1},
\widehat{\mU}^\top_y \mZ_{y,i+1}\) 
by $\widehat{\Lambda}^{-1}_{b_x}$
and $\widehat{\Lambda}^{-1}_{b_y}$,
respectively, it then follows that
\begin{subequations}
\begin{align}
\bx_{c,i+1} &= \bx_{c,i}-\mu_x \bm^x_{c,i} \label{appendix:mx_centroid} ,\\
\by_{c,i+1} &= \by_{c,i}\textcolor{blue}{ + } \mu_y \bm^y_{c,i} , \label{appendix:my_centroid}\\
\widehat{\mU}^\top_x \mX_{i+1}
&= (\widehat{\Lambda}_{a_x} \widehat{\Lambda}_{c_x} - \widehat{\Lambda}^2_{b_x} )\widehat{\mU}^\top_x\mX_i -\widehat{\mU}^\top_x \mZ_{x,i}  \\
\widehat{\mU}^\top_y \mY_{i+1}
&= (\widehat{\Lambda}_{a_y} \widehat{\Lambda}_{c_y} - \widehat{\Lambda}^2_{b_y} )\widehat{\mU}^\top_y\mY_i -\widehat{\mU}^\top_y \mZ_{y,i} , \\
\widehat{\Lambda}^{-1}_{b_x}\widehat{\mU}^\top_x \mZ_{x,i+1}&= 
\widehat{\Lambda}^{-1}_{b_x}\widehat{\mU}^\top_x \mZ_{x,i}
+\widehat{\Lambda}_{b_x}\widehat{\mU}^\top_x \mX_i  +\mu_x  \widehat{\Lambda}^{-1}_{b_x}
\widehat{\Lambda}_{a_x}
\widehat{\mU}^\top_x(\mM_{x,i+1} - \mM_{x,i}) ,\\
\widehat{\Lambda}^{-1}_{b_y}\widehat{\mU}^\top_y \mZ_{y,i+1} &= \widehat{\Lambda}^{-1}_{b_y}\widehat{\mU}^\top_y \mZ_{y,i}
+\widehat{\Lambda}_{b_y}\widehat{\mU}^\top_y \mY_i  \textcolor{blue}{-}\mu_y \widehat{\Lambda}^{-1}_{b_y}
\widehat{\Lambda}_{a_y}\widehat{\mU}^\top_y(\mM_{y,i+1} - \mM_{y,i}) .
\end{align}
\end{subequations}
Grouping the last four equations, it follows that
\begin{subequations}
\begin{align}
\bx_{c,i+1} &= \bx_{c,i} - \mu_x \Big( \frac{1}{K}\sum_{k=1}^K
\nabla_x J_k(\bx_{k,i},\by_{k,i})
+ \bs^x_{c,i} \Big)  ,\\
\by_{c,i+1} &= \by_{c,i}\textcolor{blue}{ + } \mu_y \Big( \frac{1}{K}\sum_{k=1}^K
\nabla_y J_k(\bx_{k,i},\by_{k,i})
+ \bs^y_{c,i} \Big)  ,
\end{align}
\begin{align}
&
\begin{bmatrix}
\widehat{\mU}^\top_x \mX_{i+1} \\
\widehat{\Lambda}^{-1}_{b_x}\widehat{\mU}^\top_x   \mZ_{x,i+1}
\end{bmatrix} = 
\begin{bmatrix}
    \widehat{\Lambda}_{a_x} \widehat{\Lambda}_{c_x} -  \widehat{\Lambda}^2_{b_x} & -\widehat{\Lambda}_{b_x} \\
    \widehat{\Lambda}_{b_x} & \mathrm{I}_{(K-1)d_1}
\end{bmatrix} 
\begin{bmatrix}
\widehat{\mU}^\top_x \mX_{i} \\
\widehat{\Lambda}^{-1}_{b_x}\widehat{\mU}^\top_x  \mZ_{x,i}
\end{bmatrix} 
-\mu_x
\begin{bmatrix}
0\\
\widehat{\Lambda}^{-1}_{b_x}\widehat{\Lambda}_{a_x} \widehat{\mU}^\top_x(\mM_{x,i} - \mM_{x,i+1})
\end{bmatrix} ,
\end{align}
\begin{align}
&\begin{bmatrix}
\widehat{\mU}^\top_y \mY_{i+1} \\
\widehat{\Lambda}^{-1}_{b_y}\widehat{\mU}^\top_y \mZ_{y,i+1}
\end{bmatrix}
= 
\begin{bmatrix}
    \widehat{\Lambda}_{a_y} \widehat{\Lambda}_{c_y} -  \widehat{\Lambda}^2_{b_y} & -\widehat{\Lambda}_{b_y} \\
    \widehat{\Lambda}_{b_y} & \mathrm{I}_{(K-1)d_2}
\end{bmatrix} 
\begin{bmatrix}
\widehat{\mU}^\top_y \mY_{i} \\
\widehat{\Lambda}^{-1}_{b_y}\widehat{\mU}^\top_y \mZ_{y,i}
\end{bmatrix}
\textcolor{blue}{ + }\mu_y
\begin{bmatrix}
0\\
\widehat{\Lambda}^{-1}_{b_y}\widehat{\Lambda}_{a_y} \widehat{\mU}^\top_y(\mM_{y,i} - \mM_{y,i+1})
\end{bmatrix},
\end{align}
\end{subequations}
where 
the centroid of the block error vector $ \bs^x_{c,i},  \bs^y_{c,i}$  at communication round $i$ are denoted by
\begin{align}
 \bs^x_{c,i}  &\triangleq  \frac{1}{K}(\mathds{1}^\top_K \otimes \mathrm{I}_{d_1}) \Big( \mM_{x,i} - \nabla_x \mJ(\mX_i, \mY_i) \Big)\in \mathbb{R}^{d_1},  \\
   \bs^y_{c,i} &\triangleq  \frac{1}{K}(\mathds{1}^\top_K \otimes \mathrm{I}_{d_2})\Big( \mM_{y,i} - \nabla_y \mJ(\mX_i, \mY_i) \Big)\in \mathbb{R}^{d_2},  \\
   \nabla_x \mJ(\mX_i, \mY_i)
 &\triangleq{\rm col} \{
\nabla_x J_1(\bx_{1,i}, \by_{1,i})
,\dots, 
\nabla_x J_K(\bx_{K,i}, \by_{K,i})
\} \in \mathbb{R}^{Kd_1},  \\
\nabla_y \mJ(\mX_i, \mY_i)
&\triangleq{\rm col} \{
\nabla_y J_1(\bx_{1,i}, \by_{1,i})
,\dots, 
\nabla_y J_K(\bx_{K,i}, \by_{K,i})
\} \in \mathbb{R}^{Kd_2}.
\end{align}
For the above recursions,
we denote the transition matrix of the recursion as  
\begin{align}
\mP_x &\triangleq \begin{bmatrix}
\widehat{\Lambda}_{a_{x}}\widehat{\Lambda}_{c_{x}} -\widehat{\Lambda}^2_{b_x} & - \widehat{\Lambda}_{b_x} \\
\widehat{\Lambda}_{b_x}  & \mathrm{I}_{(K-1)d_1} 
\end{bmatrix}\in \mathbb{R}^{2(K-1)d_1\times 2(K-1)d_1}\notag, \\
\mP_y &\triangleq \begin{bmatrix}
\widehat{\Lambda}_{a_{y}}\widehat{\Lambda}_{c_{y}} -\widehat{\Lambda}^2_{b_y}& - \widehat{\Lambda}_{b_y} \\
\widehat{\Lambda}_{b_y}  & \mathrm{I}_{(K-1)d_2}
\end{bmatrix} \in \mathbb{R}^{2(K-1)d_2\times 2(K-1)d_2}.
\end{align}
Using the results 
of \cite{alghunaim2022unified} and Lemma \ref{main:lemma:jordan}, there exists some matrices $ \mT_x,  \mT_y$ and invertiable matrices 
the transition matrix $\mP_x$
and $\mP_y$
\begin{align}
\mP_x = \widehat{\mQ}_x \mT_x \widehat{\mQ}^{-1}_x, \quad 
\mP_y = \widehat{\mQ}_y \mT_y \widehat{\mQ}^{-1}_y,
\end{align}
where $\|\mT_x\| <1, \|\mT_y\| < 1$.
If we define the following scaled version of the coupled error terms
\begin{align}
\widehat{\be}_{x,i} \triangleq
   \frac{1}{\tau_x} \widehat{\mQ}^{-1}_x
\begin{bmatrix}
 \widehat{\mU}^\top_x \mX_i \\
 \widehat{\Lambda}_{b_x}^{-1} \widehat{\mU}^\top_x \mZ_{x,i}
\end{bmatrix},  
\widehat{\be}_{y,i}  \triangleq
   \frac{1}{\tau_y} \widehat{\mQ}^{-1}_y
\begin{bmatrix}
 \widehat{\mU}^\top_y \mY_i \\
 \widehat{\Lambda}_{b_y}^{-1} \widehat{\mU}^\top_y \mZ_{y,i}
\end{bmatrix}.
\label{proof:tranform:def_couplederror}
\end{align}
for some positive constants $\tau_x$ and $\tau_y$.
We can then arrive at the final transformed recursion  as follows
\begin{subequations}
\begin{align}
\bx_{c,i+1} &= \bx_{c,i}-\mu_x \Big( \frac{1}{K}\sum_{k=1}^K
\nabla_x J_k(\bx_{k,i},\by_{k,i})
+ \bs^x_{c,i} \Big) \label{eq:x_finalrecursion},\\
\by_{c,i+1} &= \by_{c,i} \textcolor{blue}{ + } \mu_y \Big( \frac{1}{K}\sum_{k=1}^K
\nabla_y J_k(\bx_{k,i},\by_{k,i})
+ \bs^y_{c,i} \Big) \label{eq:y_finalrecursion} ,\\
\widehat{\be}_{x,i+1} &= \mT_x \widehat{\be}_{x,i} 
-\frac{\mu_x}{\tau_x}\mQ^{-1}_x 
\begin{bmatrix}
0\\
\widehat{\Lambda}^{-1}_{b_x} \widehat{\Lambda}_{a_x} \widehat{\mU}^\top_x(\mM_{x,i} -\mM_{x,i+1})
\end{bmatrix} \label{eq:ex_finalrecursion} ,\\
\widehat{\be}_{y,i+1} &= \mT_y \widehat{\be}_{y,i} \textcolor{blue}{ + } \frac{\mu_y}{\tau_y}  \mQ^{-1}_y \begin{bmatrix}
0\\ 
\widehat{\Lambda}^{-1}_{b_y}\widehat{\Lambda}_{a_y} \widehat{\mU}^\top_y(\mM_{y,i} - \mM_{y,i+1})
\end{bmatrix}\label{eq:ey_finalrecursion}.
\end{align}
\end{subequations}

\end{document}